\documentclass[oneside]{amsart}
\usepackage{amsmath,amssymb}
\usepackage[margin=2.5cm]{geometry}
\usepackage[magyar,british]{babel}
\usepackage[utf8]{inputenc}
\usepackage{latexsym}
\usepackage{amsfonts}
\usepackage{amssymb}
\usepackage{amsthm}
\usepackage{setspace}
\usepackage{faktor}
\usepackage{bm}
\usepackage{lmodern}
\usepackage{graphicx}
\usepackage{indentfirst}
\usepackage[hidelinks]{hyperref}
\usepackage{natbib}
\usepackage{epstopdf}
\usepackage{float}
\usepackage{xcolor}
\usepackage{enumitem}
\usepackage{caption}
\usepackage[labeled,resetlabels]{multibib}
\usepackage{etoolbox}

\newcites{D,ND}{,}

\def\restrict#1{\raise-.5ex\hbox{\ensuremath|}_{#1}}
\makeatletter
\let\old@schapter\@schapter
\AtBeginDocument{
	\def\@makechapterhead#1{\vspace*{30\p@}
		{\parindent \z@ \raggedright \normalfont
			\interlinepenalty\@M
			\Huge \bfseries
			\ifnum \c@secnumdepth >\m@ne
			\thechapter.\
			\fi
		#1\par \vskip 40\p@}}\def\@makeschapterhead#1{
	\vspace*{30\p@}{\parindent \z@ \raggedright
		\normalfont
		\interlinepenalty\@M
\Huge \bfseries #1\par \vskip 40\p@}}}
\DeclareRobustCommand{\qed}{%
\ifmmode 
\else \leavevmode\unskip\penalty9999 \hbox{}\nobreak\hfill
\fi
\quad\hbox{$\square$}}

\newcommand{\prooofname}{Proof:}

\newtheorem{theorem}{Theorem}[section]
\newtheorem{definition}[theorem]{Definition}
\newtheorem{proposition}[theorem]{Proposition}
\newtheorem{corollary}[theorem]{Corollary}
\newtheorem{lemma}[theorem]{Lemma}
\newtheorem{remark}[theorem]{Remark}

\newcommand{\Bin}{B_{\mathrm{in}}}
\newcommand{\Bout}{B_{\mathrm{out}}}
\newcommand{\Din}{D_{\mathrm{in}}}
\newcommand{\Dout}{D_{\mathrm{out}}}
\newcommand{\K}{\mathbf{K}}

\newcommand{\U}{\mathcal{U}}
\newcommand{\tb}{\mathrm{tb}}
\newcommand{\rot}{\mathrm{rot}}

\newcommand{\tw}{\mathrm{tw}}
\newcommand{\rtw}{\mathrm{rtw}}

\renewcommand{\wr}{\mathrm{wr}}

\renewcommand{\L}{\mathcal{L}}

\renewcommand{\U}{\mathcal{U}}

\newcommand\restr[2]{{
\left.\kern-\nulldelimiterspace 
#1 
\vphantom{\big|} 
\right|_{#2} 
}}

\makeatother
\begin{document}
	
	\title{Bounds on Legendrian Double Twist Knots}
	
	\author[Vikt\'oria F\"oldv\'ari]{Vikt\'oria F\"oldv\'ari}
	\address{
		Institute of Mathematics,
		ELTE E\"otv\"os Lor\'and University,
		Budapest, Hungary
	}
	\email{foldvari.viktoria@ttk.elte.hu}
	
	\author[Vera V\'ertesi]{Vera V\'ertesi}
	\address{
		Faculty of Mathematics,
		University of Vienna,
		 Vienna, Austria
	}
	\email{vera.vertesi@univie.ac.at}
	
	\keywords{Legendrian knots, double twist knots, convex surfaces,
		Thurston--Bennequin invariant}
	
	
	\begin{abstract}
	We study Legendrian realizations of double twist knots in the
	standard tight contact $3$-sphere. For the family $K(4,m)$, we give upper bounds on the number of oriented and unoriented Legendrian isotopy classes of maximal
	Thurston--Bennequin representatives. The proof uses bypass
	techniques in convex surface theory and the classification
	of tight contact structures.
	\end{abstract}
	\maketitle

\section{Introduction}

Legendrian knot theory is richer than smooth knot theory in the sense
that a fixed smooth knot type may admit several distinct Legendrian
isotopy classes. Besides the underlying smooth knot type,
Legendrian knots have two classical numerical invariants, the
Thurston--Bennequin invariant $\tb$ and the rotation number $\rot$.
A knot type is called Legendrian simple if these invariants determine
its Legendrian realizations up to Legendrian isotopy, and Legendrian
non-simple otherwise. Understanding the Legendrian realizations of a
fixed smooth knot type is a fundamental problem in
Legendrian knot theory.
In general, even deciding whether a given knot type is Legendrian
simple is difficult. Complete Legendrian classifications are known,
for example, for the unknot \cite{EliashbergFraser}, torus knots and
the figure-eight knot \cite{EtnyreHonda01}, and twist knots
\cite{ENV}. The classification of twist knots by Etnyre, Ng and
V\'ertesi is of particular relevance here, as it exhibits the rich
behavior that can already occur within this family.

Double twist knots form a family of two-bridge
knots, for which we use the standard diagram shown in Figure \ref{fig:double_twist}, where the upper twist region contains $k$ half twists and the lower box contains $m$ half twists. We denote the resulting knot type by $K(k,m)$. With this
convention, $K(k,m)$ corresponds to the continued fraction expansion
$[k,m-1,1]$. Double twist knots are natural generalizations of the
Eliashberg--Chekanov twist knots, which are recovered in the case
$k=2$.

\begin{figure}[h!]
	\centering
	\includegraphics[width=0.25\textwidth]{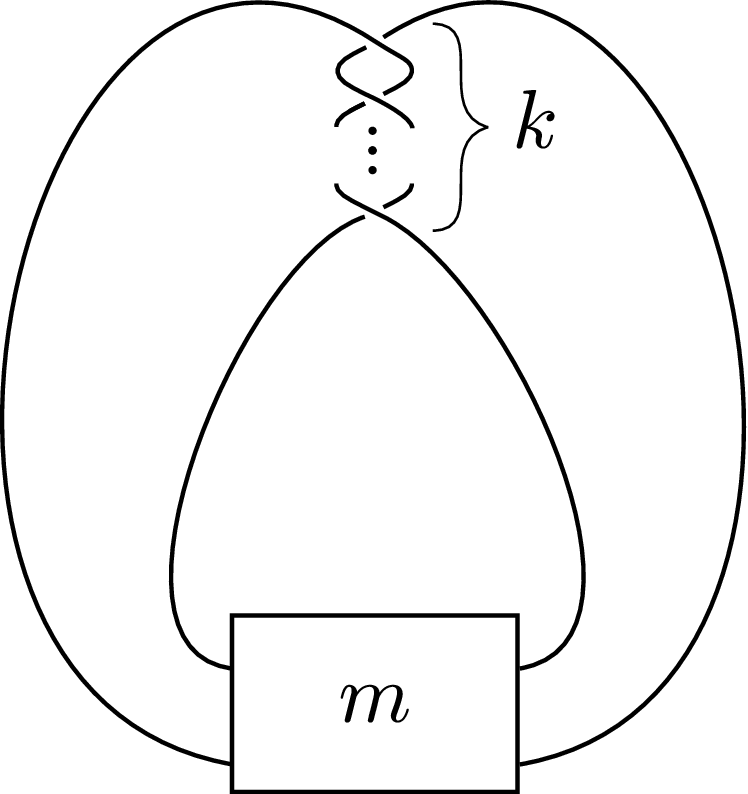}
	\caption{The double twist knot $K(k,m)$}
	\label{fig:double_twist}
\end{figure}

In this paper we concentrate on the family $K(4,m)$, building on research presented in the first author's PhD thesis \cite{FoldvariThesis}.
The cases $|m|<2$ are already understood: $K(4,0)$ is the unknot, while
$K(4,\pm1)$ are torus knots and hence Legendrian simple
\cite{EliashbergFraser,EtnyreHonda01}. For the upper bound, we therefore restrict attention to $|m|\geq2$. Our main result gives the
following upper bound.

\begin{theorem}\label{thm:main}
	For $|m|\geq 2$, the number of unoriented Legendrian isotopy classes of maximal
	Thurston--Bennequin representatives of the double twist knot
	$K(4,m)$ in $(S^3,\xi_{\mathrm{std}})$ is at most
	$$
	\begin{cases}
		6, & \textrm{if } m\geq 2,\\[2mm]
		6\cdot \left\lfloor \dfrac{(|m|+2)^2}{4}\right\rfloor, & \textrm{if } m\leq -2.
	\end{cases}
	$$
\end{theorem}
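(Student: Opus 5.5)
We follow the strategy of Etnyre--Ng--V\'ertesi for twist knots: we decompose the complement of a maximal $\tb$ representative into standard pieces and bound the number of tight contact structures on the complementary piece. Let $L$ be a Legendrian representative of $K(4,m)$ with maximal Thurston--Bennequin number $\overline{\tb}$. The knot $K(4,m)$ bounds a genus one Seifert surface $\Sigma$ obtained by plumbing two twisted bands, one carrying the $4$ half twists (two full twists) and one carrying the $m$ half twists. Choose disjoint properly embedded arcs $\gamma_1,\gamma_2\subset\Sigma$ cutting $\Sigma$ into a disk. Thickening, we get two disks $D_1,D_2$ transverse to the bands. We first Legendrian realize the cores of the two bands, $c_1,c_2$, as a Legendrian Hopf-type link in $(S^3,\xi_{\mathrm{std}})$, and arrange $L$ to lie on the boundary of a standard neighbourhood. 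Concretely, we make $\Sigma$ convex with Legendrian boundary $L$; the number of dividing-curve endpoints on $L$ is $2|\tb(L)|$-controlled by the twisting of $\xi$ relative to $\Sigma$.

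The key steps, in order, are as follows.
\begin{enumerate}[label=(\arabic*)]
\item Determine $\overline{\tb}(K(4,m))$ from the Kauffman polynomial bound and explicit front diagrams. For $m\ge 2$ and $m\le -2$ these give different values, which is the source of the asymmetry in the statement.
\item Make $\Sigma$ convex and analyse its dividing set $\Gamma_\Sigma$. Using the Giroux criterion (tightness forbids homotopically trivial dividing curves) together with the maximality of $\tb$, we look for bypasses along $\gamma_1,\gamma_2$. If a bypass existed along $\partial\Sigma$, it would destabilize $L$ and contradict maximality. Hence the dividing curves on each band are essentially arcs running parallel to the core. Their number is determined up to a finite choice: how many of them cross $\gamma_1$ and $\gamma_2$, and how the twisting splits between the two bands.
\item Cut $S^3\setminus N(\Sigma)$ along the disks. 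The complement of a neighbourhood of a genus one surface is a genus two handlebody; cutting along two compressing disks $D_1,D_2$ whose boundaries intersect $\Gamma$ minimally yields a ball with a prescribed dividing curve. By Eliashberg's uniqueness there is a single tight structure on the ball. So the contact structure, and hence $L$ up to Legendrian isotopy (via the Kanda/Honda argument that the germ along $\Sigma$ plus the complement determines $L$), is determined by the combinatorics of the dividing curves on $D_1$ and $D_2$.
\item Count these combinatorial possibilities. On the band with $4$ half twists, the dividing configurations on $D_1$ give $6$ choices. These correspond to the choices of rotation and of ``zigzag placement'' in the two full twists, i.e.\ how stabilizations of the two clasps are distributed, up to bypass-induced equivalence. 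For $m\ge 2$, the $m$-band has maximal twisting forced and contributes a single configuration. For $m\le -2$, the second band has $|m|+2$ units of twisting to be distributed as a pair of zigzag counts $(a,b)$ with $a+b$ bounded. After modding out by the symmetry exchanging the two clasps, this gives $\lfloor (|m|+2)^2/4\rfloor$ choices, the number of products $a\cdot b$ type partitions. Multiplying gives the bound.
\end{enumerate}

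The main obstacle is step (2)--(4): showing that bypasses reduce every dividing configuration on $D_1,D_2$ to one of the listed normal forms. We must also check that the classification of tight structures on the relevant pieces (a solid torus or $T^2\times I$ after gluing one disk) does not allow extra structures. I would first attempt this by attaching bypasses along $\gamma_i$ from the front side, using the Imbalance Principle when the dividing-curve counts on the two sides of a disk differ. Where they agree, I would appeal to Honda's classification of tight structures on solid tori to bound the number of distinct fillings by the continued fraction data. I expect the precise count $\lfloor (|m|+2)^2/4\rfloor$ to require care at this stage.
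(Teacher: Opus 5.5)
\textbf{Verdict: genuine gap.} Your proposal breaks down at the setup, before any counting.

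\emph{Odd $m$.} The genus one plumbing surface exists only for even $m$. A band with an odd number $m$ of half twists closes up to a M\"obius band. In fact, for odd $|m|\geq 3$ the knot $K(4,m)$ has genus $2$: its even continued fraction has length $4$, and e.g.\ $K(4,3)$ is $7_3$ up to mirror. So every odd $m$ is left without a starting point.

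\emph{Even $m\leq -2$.} Here $\tb_{\max}(K(4,m))=1$ by Proposition~\ref{thm:max_tb}. For a Seifert surface, $\tw(L,\Sigma)=\tb(L)$. A convex surface with Legendrian boundary, however, satisfies $\tw(\partial\Sigma,\Sigma)=-\frac12|\Gamma_\Sigma\cap\partial\Sigma|\leq 0$. So $\Sigma$ can never be made convex with boundary $L$ in this range, and your count of ``$2|\tb(L)|$ endpoints'' does not apply. This is exactly the family that is supposed to produce the quadratic factor.

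\emph{Even $m\geq 4$.} Here the setup is available, but step (4) is assertion, not argument. You never normalize $\Gamma_\Sigma$. Without that, dividing arcs on a once-punctured torus can wind arbitrarily often around the two bands, and closed essential dividing curves are allowed by Giroux's criterion. So the intersection patterns with $\partial D_i$ are not a priori finite, and nothing in your plan yields $6$ or $\lfloor(|m|+2)^2/4\rfloor$. In particular, the latter number does not arise as a quotient by a clasp-exchanging symmetry.

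\textbf{The missing idea} is to cut along the $2$-sphere $S$ that meets the knot in four points and separates the two twist regions, rather than along a Seifert surface. The convex surface is then the four-punctured sphere $P$, whose boundary circles are $\tb=-1$ unknots on $\partial N(L)$. Neither the sign of $\tb(L)$ nor the parity of $m$ matters.

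The hard step, analogous to the normalization you leave open, is Theorem~\ref{thm:normalizingGamma}. It uses an auxiliary Legendrian unknot $\U$ with $\tb(\U)=-1$, together with bypass arguments including bypass rotation, to put $\Gamma_P$ into one fixed form.

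The quadratic factor comes from the inner Legendrian $2$-braid. By Etnyre--V\'ertesi it is a word in the blocks $X,S,Z$. Non-destabilizability leaves $X^m$ or an $S/Z$-word. The isotopies $SSZ\simeq ZSS$ and $SZZ\simeq ZZS$ reduce such a word to the numbers of $S$'s at odd and at even positions, giving $(\lfloor |m|/2\rfloor+1)(\lceil |m|/2\rceil+1)$ classes with no symmetry quotient.

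The factor $6$ comes from the outer genus two handlebody $\Bout\setminus(N_1\cup N_2)$, cut along two compressing disks:
\begin{itemize}
\item after excluding the overtwisted configurations, six dividing sets remain per disk;
\item the constraint $\bar x_1+\bar x_2=10$, coming from the fixed $\tb$, leaves $10$ ordered pairs;
\item the rotation $\varrho$ swapping the two strands reduces these to $6$ orbits.
\end{itemize}
Gluing, together with the fact that contactomorphic complements imply Legendrian isotopy, then gives the product bound.
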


For oriented Legendrian knots, we obtain the following upper bound.

\begin{corollary}\label{cor:oriented}
	For $|m|\geq2$, the number of oriented Legendrian isotopy
	classes of maximal Thurston--Bennequin representatives
	of the double twist knot $K(4,m)$ in
	$(S^3,\xi_{\mathrm{std}})$ is at most
	$$
	\begin{cases}
		10, & \textrm{if } m\geq2 \text{ and } m \text{ is even},\\[2mm]
		12, & \textrm{if } m>2 \text{ and } m \text{ is odd},\\[2mm]
		10\cdot\left\lfloor\dfrac{(|m|+2)^2}{4}\right\rfloor,
		& \textrm{if } m\leq-2.
	\end{cases}
	$$
\end{corollary}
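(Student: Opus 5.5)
The plan is to deduce Corollary~\ref{cor:oriented} from Theorem~\ref{thm:main} by analysing how each unoriented class splits into oriented classes. Each unoriented Legendrian isotopy class of a maximal Thurston--Bennequin representative $L$ yields at most two oriented classes, namely $L$ and its reverse $-L$. These coincide as oriented classes only if $L$ is Legendrian isotopic to its own reverse, and this forces $\rot(L)=-\rot(L)$, i.e.\ $\rot(L)=0$. So the count comes down to \emph{which} unoriented classes can have nonzero rotation number.

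First I would go back to the proof of Theorem~\ref{thm:main}. Its bypass and convex surface argument should produce an explicit list of candidate front diagrams, one per unoriented class: $6$ for $m\geq 2$, and $6\lfloor (|m|+2)^2/4\rfloor$ for $m\leq -2$. The factor $6$ should come from the choices of stabilization/twisting in the $k=4$ twist region, and the quadratic factor from distributing zigzags in the negative twist region. For each candidate I would compute $\rot$ from the front, as half the difference of down and up cusps after choosing an orientation.

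Second, the bound is then
\[
\#\{\text{oriented classes}\}\leq 2\cdot\#\{\text{unoriented with }\rot\neq0\}+ 2\cdot\#\{\text{unoriented with }\rot=0\}.
\]
To get below $2\times$ the unoriented count, I need that some $\rot=0$ classes are actually isotopic to their reverses. The expected mechanism is a symmetry of the candidate fronts, such as the $\pi$-rotation of the two-bridge diagram, which reverses orientation and preserves the front up to Legendrian Reidemeister moves. This symmetry would identify $L$ with $-L$ for those candidates.

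The counts should then come out as follows:
\begin{itemize}
\item For $m\leq-2$, in each block of $6$ exactly $2$ candidates are self-reverse, giving $2\cdot4+2=10$ per block.
\item For $m\geq2$ even, $10=2\cdot 4+2$.
\item For $m$ odd, the parity of $m$ changes the orientation pattern through the lower twist box, so the two-stranded twist region is traversed differently. In that case the symmetric candidates are not visibly self-reverse, so they contribute their full $2$ each, giving $12=2\cdot6$.
\end{itemize}

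The main obstacle is the bookkeeping in the second step: establishing for exactly which candidate fronts the reverse is Legendrian isotopic to the original, and seeing how this depends on the parity of $m$. Since $m$'s parity determines whether the strands in the lower box are parallel or antiparallel, I would first fix the orientation conventions on Figure~\ref{fig:double_twist} in both parities. Then I would check the symmetry-induced isotopy on each of the $6$ basic local models, and only afterwards multiply by the quadratic count for $m\leq -2$.
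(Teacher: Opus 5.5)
Your plan contains no step that actually proves an upper bound. The proof of Theorem~\ref{thm:main} produces no explicit fronts. The numbers $6$ and $6\lfloor(|m|+2)^2/4\rfloor$ count combinatorial labels: the ten ordered pairs of dividing-set configurations on the compressing disks $D_1,D_2$ allowed by $\bar x_1+\bar x_2=10$, taken modulo the rotation $\varrho$, times the inner $S/Z$ words. These labels need not be realized, so there is nothing to compute $\rot$ on, and nothing to symmetrize by Reidemeister moves.

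Rotation numbers are the wrong tool in any case. $\rot(\L)\neq0$ shows $\L\neq-\L$, which is a lower-bound statement, while $\rot(\L)=0$ proves nothing. What you need is a proof that oriented realizations with related data are oriented isotopic. The paper gets this in three steps:
\begin{itemize}
\item It adds an orientation bit $\varepsilon$, recording whether the knot enters or leaves $\Bin$ at the upper right endpoint.
\item It shows that the triple (outer configuration, inner piece, $\varepsilon$) determines at most one oriented class. This uses the gluing argument plus the oriented form of Proposition~\ref{prop:elegctctotszamolni}; equal $\varepsilon$ is what makes the contactomorphism preserve the oriented Seifert longitude.
\item It observes that $\varrho$ is contact isotopic to the identity, since with $u=z-xy/2$ one has $dz-y\,dx=du+\tfrac12(x\,dy-y\,dx)$. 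Hence $\L$ and $\varrho(\L)$ are oriented isotopic.
\end{itemize}
The bound is then the number of $\varrho$-orbits on the at most $20N_m$ complete choices, where $N_m=1$ for $m>2$ and $N_m=\lfloor(|m|+2)^2/4\rfloor$ for $m\le-2$.

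Parity enters because $\varrho$ flips $\varepsilon$ when $m$ is even and preserves it when $m$ is odd. For even $m$ the involution is therefore free, giving $10N_m$.
\begin{itemize}
\item For even $m>2$, your ``two self-reverse out of six'' matches the structure: these are the two $\varrho$-fixed outer configurations with $(x_1,x_2)=(3,3)$.
\item For odd $m>2$, your doubling $2\cdot6=12$ is exactly what the paper obtains.
\item The case $m=2$ should simply be quoted from the Etnyre--Ng--V\'ertesi classification.
\end{itemize}

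For $m\le-2$, however, the scheme ``double $6N_m$ and subtract self-reverse classes'' cannot reach $10N_m$. The rotation $\varrho$ also reverses the inner word and swaps $S\leftrightarrow Z$. So it identifies a label with its reverse only when both the outer and the inner parts are $\varrho$-fixed. When $|m|=2q$ this happens for only $2(q+1)$ labels, which yields $12N_m-2(q+1)>10N_m$. When $m$ is odd it happens for no labels, because $\varepsilon$ is preserved. For odd $m\le-3$ your first and third bullets also contradict each other.

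The actual saving for odd $m\le-3$ comes from the inner parameters. With $|m|=2q+1$, $\varrho$ acts by $(a,b)\mapsto(q+1-a,q-b)$, which has no fixed points, since a fixed point would require $2(a+b)=2q+1$. Hence the involution on complete choices is free. The drop from $12N_m$ to $10N_m$ thus reflects that $6N_m$ overcounts $\varrho$-orbits, not the existence of self-reverse knots. You must count orbits directly on the oriented data, or at least sharpen the unoriented count first.
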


The proof is based on convex surface theory. We cut $S^3$ and the
double twist knot along a $2$-sphere separating the two twist regions.
After making the sphere convex and normalizing its dividing set, we
study the two sides separately. The normalized dividing set provides
common boundary data for comparing the pieces. We obtain upper bounds
for the possible inner Legendrian pieces and tight contact structures
on the outer complement, which combine to give Theorem \ref{thm:main}.
The resulting bound is uniform for $m\geq2$ and quadratic in $|m|$
for $m\leq-2$.

For completeness, we also record the maximal Thurston--Bennequin
invariant of $K(4,m)$ in our convention. This also follows from
Ng's computation for two-bridge links \cite{NgMaxTb}.

\begin{proposition}\label{thm:max_tb}
	The maximal Thurston--Bennequin invariant of the double twist knot
	$K(4,m)$ in $(S^3,\xi_{\mathrm{std}})$ is
	$$
	\tb_{\max}(K(4,m))=
	\begin{cases}
		-1-m, & \textrm{if } m\geq-2 \text{ and } m \text{ is even},\\
		-6, & \textrm{if } m=-1,\\
		-9-m, & \textrm{if } m\geq1 \text{ and } m \text{ is odd},\\
		1, & \textrm{if } m<-2 \text{ and } m \text{ is even},\\
		-7, & \textrm{if } m<-2 \text{ and } m \text{ is odd}.
	\end{cases}
	$$
\end{proposition}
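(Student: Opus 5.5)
The plan is to prove each case of Proposition~\ref{thm:max_tb} by a matching pair of bounds. For the lower bound we exhibit an explicit Legendrian front realizing the claimed value. For the upper bound we use a polynomial bound on $\tb$ that is sharp for two-bridge knots: the Kauffman polynomial bound $\tb(L)\le -\max\deg_a F_K(a,z)-1$ (Rutherford), or, where it suffices, the Khovanov or HOMFLY-PT bound. Since $K(4,m)$ is the two-bridge knot with continued fraction $[4,m-1,1]$, we could also simply translate into Ng's normalization of two-bridge links \cite{NgMaxTb} and read off his formula. To keep the paper self-contained, however, we carry out both bounds directly.

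\emph{Lower bounds (constructions).} We draw the standard diagram of Figure~\ref{fig:double_twist} as a front. The upper box has $4$ half twists, the lower box has $m$ half twists, and we choose the orientations of the two strands in each box. In a twist region whose crossings are realized as front crossings with no added cusps, each crossing contributes $\pm1$ to the writhe; a region that must be drawn with cusps (the ``wrong'' handedness relative to the front) contributes extra cusps, roughly one per crossing. We then compute $\tb=\wr-\tfrac12\#\text{cusps}$ case by case.
\begin{itemize}
\item For $m$ even, the two strands of the upper region are oppositely oriented.
\item For $m$ odd, they are parallel.
\end{itemize}
This parity dependence of the strand orientations explains why the formula splits by parity. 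For $m<-2$ the lower twists have the favourable sign and can be drawn as crossings without cusps, which gives the constant values $1$ and $-7$. For $m\ge -2$ each lower half twist costs a cusp, which gives the linear terms $-1-m$ and $-9-m$. The sporadic cases $m=-1$ (a torus knot, where $-6$ is known \cite{EtnyreHonda01}) and $m=-2$ are checked by hand.

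\emph{Upper bounds.} For each case we compute the $a$-degree of the Kauffman polynomial using the skein relation at a crossing of the lower twist box. This gives a linear recursion in $m$ with step $2$, so we handle the two parities separately. The base cases $m\in\{0,\pm1,\pm2\}$ are known knots (the unknot, torus knots, and small twist-type knots). By induction we track the extreme $a$-degree and show that its coefficient does not cancel. This is done by checking that the top-degree coefficients have a consistent sign pattern, or equivalently by noting that $K(4,m)$ is alternating for one sign regime, where Kauffman degree is controlled by Thistlethwaite-type results.

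The main obstacle is this non-cancellation of leading terms in the recursion, especially in the $m\ge 1$ odd case, where the answer $-9-m$ shows a shift of $8$ relative to the even case. As a first attempt we use the fact that $K(4,m)$ is alternating (two-bridge knots with positive continued fractions are alternating). For alternating knots the Kauffman bound is sharp and is computable from the reduced alternating diagram via the Kauffman bracket span or Ng's arc-index/ruling arguments. Failing this, we cite Ng's theorem for two-bridge knots after verifying that our $[4,m-1,1]$ convention converts to his, including the mirror and orientation conventions.
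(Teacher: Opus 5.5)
Your lower-bound heuristic has the two regimes reversed, and as written it cannot give the claimed numbers. A twist drawn as a plain front crossing changes $\tb$ by $\pm1$, so cusp-free twists make $\tb$ depend linearly on $m$. A twist that needs a zig-zag changes $\tb$ by $0$ or $-2$, never by $-1$. In the paper's fronts (Figure~\ref{fig:max_tbk_ruling}: $|m+2|$ crossings in the box plus two side crossings), the box crossings are cusp-free in the case $m\ge -2$. The inner strands are always antiparallel, so these crossings are negative, which gives the slope $-1$ in $-1-m$ and $-9-m$. The shift by $8$ comes from the four upper crossings becoming negative when $m$ is odd, because the upper strands are then parallel, as you correctly note. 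For $m<-2$, each box crossing is positive and comes with two cusps, contributing $1-\tfrac12\cdot 2=0$; this is why $\tb$ is constant ($1$ or $-7$).

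The more serious gap is the upper bound. You name the non-cancellation of the top $a$-coefficient of the Kauffman polynomial as the main obstacle, but you never resolve it, and the remedies you list do not work as stated:
\begin{itemize}
\item The skein relation at a box crossing produces $K(4,m-2)$, but also $K(4,m-1)$ and the $\infty$-closure (a fixed link up to a framing factor). So the recursion couples consecutive values of $m$ and does not split by parity.
\item The Kauffman bracket span controls the Jones polynomial, i.e.\ the crossing number of a reduced alternating diagram, not $\max\deg_a F$.
\item Knowing that the Kauffman bound is sharp for alternating or two-bridge knots does not by itself tell you its value.
\end{itemize}
Your passing mention of ``ruling arguments'' is the right idea, but it has to be made precise as Rutherford's theorem (Proposition~\ref{prop:ruling_maxtb}). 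For any front, the coefficient of $a^{-\tb-1}$ in the Kauffman polynomial is its ungraded ruling polynomial. Hence a front admitting a normal ruling attains the Kauffman bound and has maximal $\tb$, and the non-cancellation you worry about is automatic. This is exactly the paper's proof: one picture gives both bounds. The paper draws the fronts, marks the switches of a normal ruling, and reads off $\tb=\wr-\tfrac12 c$. It treats $m=-1$ (the negative trefoil, $\tb=-6$) and $m=0$ (the unknot) with their standard fronts, which also carry normal rulings. Your fallback of citing Ng's two-bridge formula is legitimate, and the paper remarks on it too. But then your proposal reduces to a citation plus a convention check you have not carried out, rather than the self-contained argument you set out to give.
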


Section \ref{sec:prelim_convex} collects the necessary background from
Legendrian knot theory and convex surface theory. Section \ref{sec:upper_bound} contains the proofs of our main results. 

\section{Preliminaries}\label{sec:prelim_convex}

We briefly recall some notions and results of Legendrian knot theory and convex surface theory that we use in the proofs. For a more detailed introduction the reader is referred to \cite{EtnyreConvex, EtnyreHonda01, Giroux91, HondaNotes, HondaOnTheI, ctct_book, FoldvariThesis}.

Throughout the paper, we work in the standard tight contact
three-sphere $(S^3,\xi_{\mathrm{std}})$. We use the same notation
$\xi_{\mathrm{std}}$ for the standard contact structure on
$\mathbb{R}^3$,
$$
\xi_{\mathrm{std}}=\ker(dz-y\,dx).
$$
Recall that removing a point from $(S^3,\xi_{\mathrm{std}})$ gives a
contact manifold contactomorphic to $(\mathbb{R}^3,\xi_{\mathrm{std}})$.

We work with the standard front projection to the $xz$-plane, and use calligraphic letters for Legendrian knots, while regular letters for smooth knot types.

The contact planes along $\L$ determine a framing $f_{TB}$ of
$\L$: it is represented by any nowhere-vanishing vector field in
$\xi_{\mathrm{std}}|_{\L}$ transverse to $T\L$. We call this the
Thurston--Bennequin framing. A Seifert surface of $\L$ determines
the Seifert framing $f_S$. For two framings
$f_1$ and $f_2$ of $\L$, let $\tw(f_1,f_2)$ denote the twisting
of $f_1$ with respect to $f_2$. Then
$$
\tb(\L)=\tw(f_{TB},f_S).
$$

If $\Sigma$ is an oriented Seifert surface of $\L$, then a
trivialization of $\xi_{\mathrm{std}}|_\Sigma$ induces a
trivialization of $\xi_{\mathrm{std}}$ along $\L$. The winding number
of $T\L$ with respect to this trivialization is the
\emph{rotation number} of $\L$, denoted by $\rot(\L)$.

Both invariants can be computed directly from a front projection.

\begin{proposition}\cite{ctct_book}
	\label{prop:classical_from_front}
	Let $\L\subset(\mathbb{R}^3,\xi_{\mathrm{std}})$ be an oriented
	Legendrian knot. Let $\wr(\L)$ denote the writhe of its front
	projection, let $c$ be the number of cusps, and let $c_d$ and $c_u$
	denote the numbers of downward and upward oriented cusps,
	respectively. Then
	\[
	\tb(\L)=\wr(\L)-\frac{1}{2}c,
	\qquad
	\rot(\L)=\frac{1}{2}(c_d-c_u).
	\]
\end{proposition}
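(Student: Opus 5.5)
The plan is to prove both formulas directly in $(\mathbb{R}^3,\xi_{\mathrm{std}})$, with $\xi_{\mathrm{std}}=\ker(dz-y\,dx)$ and $\L$ recovered from its front by $y=dz/dx$. The key observation is that $\partial_z$ is everywhere transverse to $\xi_{\mathrm{std}}$. Hence pushing $\L$ off along $\partial_z$ gives a framing that never twists relative to the contact planes, and so it represents $f_{TB}$. Thus
$$\tb(\L)=\tw(f_{TB},f_S)=\mathrm{lk}(\L,\L'),\qquad \L'=\L+\varepsilon\partial_z .$$
In the front projection, $\L'$ is the front of $\L$ translated slightly upward. The translation does not change slopes, so $\L'$ is again Legendrian and the over/under information is still read from the $y$-coordinate, namely the slope.

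Next I compute $\mathrm{lk}(\L,\L')$ as half the signed count of crossings between the two fronts. Since the $xz$-projection of a generic Legendrian knot is a regular knot projection, with crossing information given by comparing slopes, this count is legitimate. I would do the count locally.
\begin{itemize}
\item Near a crossing of $\L$ of sign $\epsilon$, the two fronts produce four crossings. Two of these are self-crossings (of $\L$ and of $\L'$) and do not count. The other two are crossings of $\L$ with $\L'$, both of sign $\epsilon$, so together they contribute $\epsilon$ to the linking number. Summing over all crossings gives $\wr(\L)$.
\item Near a cusp, the translated cusp meets the original in exactly one crossing. Checking the slopes shows this crossing is negative in both the left- and right-cusp cases, independent of orientation, so it contributes $-\tfrac12$.
\item Away from crossings and cusps, the two fronts are parallel and do not meet.
\end{itemize}
Adding these contributions gives $\tb(\L)=\wr(\L)-\tfrac12 c$.

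For the rotation number, I would use the global frame $e_1=\partial_x+y\partial_z$, $e_2=\partial_y$ of $\xi_{\mathrm{std}}$ on $\mathbb{R}^3$. Because $\mathbb{R}^3$ is contractible, this trivialization agrees up to homotopy with the one induced by any Seifert surface, so $\rot(\L)$ is the winding number of $T\L$ in this frame. Writing $\gamma(t)=(x(t),y(t),z(t))$, the tangent vector is $\dot\gamma=\dot x\,e_1+\dot y\,e_2$. I then count signed passages of $(\dot x,\dot y)$ through the ray $\{\dot x=0,\ \dot y>0\}$.

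Such passages can occur only where $\dot x=0$, which for a generic front means only at cusps. At a cusp $\dot x$ changes sign while $\dot y\neq 0$, so the tangent crosses the $e_2$-axis. It crosses the positive ray exactly when $\dot y>0$, that is, when the slope is increasing through the cusp. A short case check over left or right cusps traversed in either direction should show the following. Each downward cusp makes the tangent sweep half a turn in one sense, and each upward cusp makes it sweep half a turn in the other sense. Summing the half-turns gives $\rot(\L)=\tfrac12(c_d-c_u)$.

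The main obstacle is sign bookkeeping rather than any conceptual step. Three conventions have to match: the sign of the cusp crossing between $\L$ and $\L'$, the sign of the two crossings between $\L$ and $\L'$ at a crossing of $\L$ (which depends on which strand lies in front, that is, the one with the smaller value of $y=dz/dx$ under the chosen orientation of the viewing direction), and the direction of rotation at up versus down cusps. I would settle all three by drawing the local models explicitly once and reading off the signs. As a sanity check I would verify against the standard unknot with two cusps, for which the formulas should give $\tb=-1$ and $\rot=0$.
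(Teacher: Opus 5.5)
Your outline is correct. The paper does not prove this proposition itself; it quotes it from \cite{ctct_book}, and you have reproduced the standard argument behind that citation. The vertical field $\partial_z$ is transverse to $\xi_{\mathrm{std}}\supset T\L$, so the push-off along it realizes $f_{TB}$, and $\rot$ is the winding number of $T\L$ against the global frame $(e_1,e_2)$. Your $\tb$ bookkeeping checks out. The translated front has the same slopes, so at a crossing of $\L$ the two mixed crossings inherit its over/under data and its sign. At a cusp, say $t\mapsto(t^2,\tfrac32t,t^3)$, the translated lower branch passes in front of the original upper branch with sign $\mathrm{sign}\bigl(6t_1t_2(t_2-t_1)\bigr)=-1$ for $t_1<0<t_2$, in either orientation. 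A front is not a regular projection, because of its cusps. Still, the mixed crossings are transverse double points away from the cusps, and that is all the linking count needs.

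The step to tighten is the rotation number. Write $L_u,L_d,R_u,R_d$ for the numbers of upward and downward left and right cusps. Counting signed passages through the single ray $\{\dot x=0,\ \dot y>0\}$ only detects cusps with $\dot y>0$: the upward left cusps (clockwise) and the downward right cusps (counterclockwise). So that count gives $\rot(\L)=R_d-L_u$, not the symmetric formula. Also, a cusp does not literally make the tangent sweep half a turn; the tangent only crosses the $e_2$-axis there.

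To obtain $\tfrac12(c_d-c_u)$, average this with the count through $\{\dot x=0,\ \dot y<0\}$, which gives $L_d-R_u$. The two counts agree because left and right cusps are equinumerous. After averaging, each cusp contributes exactly $\pm\tfrac12$.

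The sign is fixed by orienting $\xi_{\mathrm{std}}$ by $d(dz-y\,dx)=dx\wedge dy$, which is positive on $(e_1,e_2)$. With this orientation, every downward cusp is a counterclockwise passage and every upward cusp a clockwise one. For example, at a downward right cusp $(\dot x,\dot y)$ goes from $(+,+)$ to $(-,+)$. Note that your unknot sanity check cannot detect this sign, since $\rot=0$ there; a once-stabilized unknot would.

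Finally, do not claim that the global frame agrees up to homotopy with a Seifert-surface trivialization. Instead, observe that its restriction to a Seifert surface $\Sigma$ is itself a trivialization of $\xi_{\mathrm{std}}|_\Sigma$. Any two such trivializations induce the same winding along the connected boundary $\partial\Sigma$.
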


A \emph{stabilization} of an oriented Legendrian knot is obtained by
replacing a small segment of its front projection by a zig-zag with
two cusps. If the cusps are oriented downward, we call the
stabilization positive and denote it by $S_+(\L)$; if they are
oriented upward, we call it negative and denote it by $S_-(\L)$.
The inverse operation is called a \emph{destabilization}.

\begin{proposition}\label{prop:stab_invts}
	Stabilization changes the classical invariants according to
	$$
	\tb(S_\pm(\L))=\tb(\L)-1,
	\qquad
	\rot(S_\pm(\L))=\rot(\L)\pm1.
	$$
\end{proposition}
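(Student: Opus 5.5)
Since a stabilization is a local modification of the front, I will compute the change of the classical invariants directly from the front formulas of Proposition~\ref{prop:classical_from_front}. The plan is to write a stabilization as the replacement of a short arc of the front (chosen away from crossings and cusps) by a zig-zag. Then I compare the writhe, the total number of cusps, and the numbers of up and down cusps before and after.

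\textbf{Thurston--Bennequin invariant.} The zig-zag is a small planar curve with two cusps and no self-crossings. Since it sits in a neighbourhood of the chosen arc that meets no other strand, it creates no new crossings. So $\wr(S_\pm(\L))=\wr(\L)$ and $c(S_\pm(\L))=c(\L)+2$. Proposition~\ref{prop:classical_from_front} then gives
\[
\tb(S_\pm(\L))=\wr(\L)-\tfrac12(c+2)=\tb(\L)-1 .
\]
This holds for either sign of the stabilization.

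\textbf{Rotation number.} By the definition above, in $S_+$ both new cusps are traversed downward, and in $S_-$ both are traversed upward. The existing cusps keep their orientations, because the orientation of $\L$ is unchanged outside the small arc. Hence $c_d$ grows by $2$ and $c_u$ is unchanged for $S_+$, while the reverse happens for $S_-$. The formula $\rot=\frac12(c_d-c_u)$ therefore yields $\rot(S_\pm(\L))=\rot(\L)\pm1$.

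\textbf{Expected obstacle.} The only point needing care is to confirm that the zig-zag really adds no crossings and that both of its cusps have the same orientation type. Both facts follow from looking at the standard local picture: a zig-zag of the shape ``Z'' inserted into a strand that is, say, traversed left to right. Such a zig-zag consists of three smooth branches joined at two cusps. As one travels along it, the two cusps are passed in the same vertical direction, both downward or both upward depending on the chosen zig-zag.

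One should also note that the result does not depend on where the stabilization is performed. This is standard but not needed for the invariant computation, which is purely local.
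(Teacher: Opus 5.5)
Your proposal is correct. The paper states this proposition without proof, as a standard background fact placed right after the front formulas of Proposition~\ref{prop:classical_from_front}, and your local computation is exactly the argument implicit there: the zig-zag adds two cusps and no crossings, so $\tb$ drops by one, and both new cusps are downward for $S_+$ and upward for $S_-$ in the paper's convention, so $\rot$ changes by $\pm1$. You are also right that independence of where the stabilization is performed is not needed, since the computation applies verbatim wherever the zig-zag is inserted.
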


In particular, every Legendrian representative with maximal
Thurston--Bennequin invariant is non-destabilizable.

Normal rulings provide a convenient combinatorial tool for detecting
maximal Thurston--Bennequin representatives.

\begin{definition}
	A \emph{ruling} of a Legendrian front diagram is a one-to-one correspondence between the left and right cusps of the diagram together with a decomposition of the diagram as the union of pairs of paths joining the corresponding left and right cusps and satisfying the following conditions: 
	\begin{itemize}
		\item Any two paths meet only at cusps or crossings.
		\item Every path is smooth away from the double points and is monotone in the	$x$-coordinate.
		\item The interiors of two paths joining corresponding cusps are disjoint, that is, they meet only at the cusps and their union bounds a topological disk called a \emph{ruling disk}.
		\item At a crossing either the two paths cross transversely or switch from one arc to the other so that one path lies entirely above the other. The latter type of crossing is called a \emph{switch}, which we denote on the knot diagram by a black dot at the crossing, see Figure \ref{fig:switch}.
	\end{itemize}
	
	If near each switch the ruling disks are locally either disjoint or nested, in other words, if all the switch crossings are as in Figure \ref{fig:switch}, then we call the configuration a \emph{normal ruling}. Figure \ref{fig:disallowed_switches} shows switches which are disallowed in a normal ruling.
\end{definition}

A ruling of a diagram can be given by indicating which paths form a pair, by coloring the ruling disks that appear or simply by denoting which crossings are switches. Observe that this latter way is sufficient, because starting at any left cusp we can follow the path (uniquely determined by the switch and non-switch crossings) until the corresponding right cusp, and then follow the other uniquely determined path back to the same left cusps. Therefore, from now on we will describe rulings only by marking which crossings of the diagram are switches.

\begin{figure}[!h]
	\centering
	\includegraphics[width=0.4\textwidth]{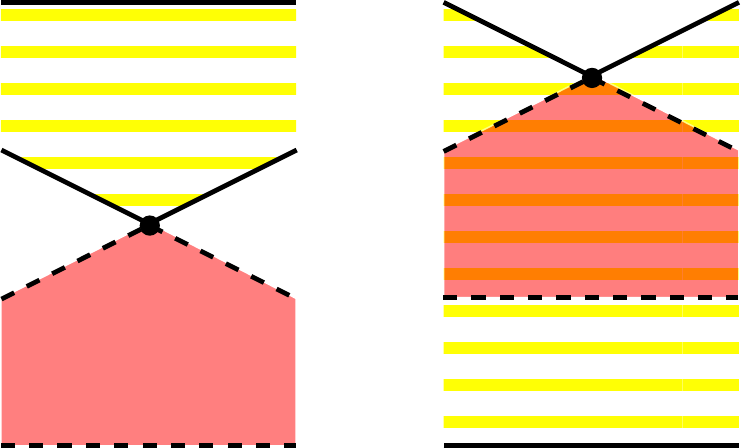}
	\caption{Allowed switches of a normal ruling. Here we only illustrate a neighborhood of a switch crossing. The style of the arcs (continuous or dashed) indicate which two paths form a pair, i.e. share cusp endpoints. The two ruling disks bounded by the two pairs of paths are colored by red and yellow striped. Orange stripes on red background express the intersection of the two ruling disks. In this picture, near the switch, ruling disks are either disjoint or nested.}
	\label{fig:switch}
\end{figure}

\begin{figure}[!h]
	\centering
	\includegraphics[width=0.4\textwidth]{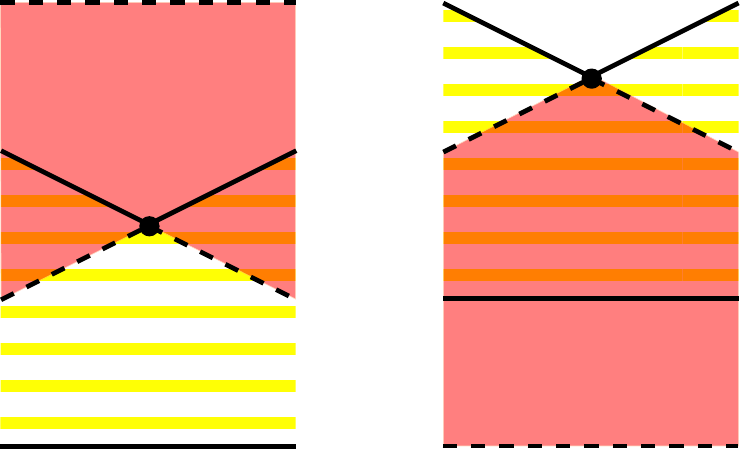}
	\caption{Switches which are disallowed in a normal ruling. We only illustrate a neighborhood of a switch crossing. The style of the arcs (continuous or dashed) indicate which two paths form a pair, i.e. share cusp endpoints. The two ruling disks bounded by the two pairs of paths are colored by red and yellow striped. Orange stripes on red background express the intersection of the two ruling disks. In this picture, ruling disks are neither disjoint nor nested near the switch.}
	\label{fig:disallowed_switches}
\end{figure}

We rely on the following result of Rutherford to verify that the fronts
appearing below have maximal Thurston--Bennequin invariant.

\begin{proposition}\cite{Rutherford2006}
	\label{prop:ruling_maxtb}
	If the front projection of a Legendrian knot admits a normal ruling,
	then it realizes the maximal Thurston--Bennequin invariant within its
	smooth knot type.
\end{proposition}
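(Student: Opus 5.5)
The plan is to go through the Kauffman polynomial upper bound on $\tb$ and show that a normal ruling makes this bound sharp. First, recall the Kauffman bound (Rudolph; Fuchs--Tabachnikov): for any Legendrian representative $\L$ of a smooth knot type $K$, in the framed normalization of the Kauffman polynomial $F_K(a,z)$ one has
$$\tb(\L)\le -\max\deg_a F_K(a,z)-1 .$$
The right-hand side depends only on $K$. So if some front of $\L$ attains equality, then $\tb(\L)$ is the maximum over all representatives of $K$. Everything therefore reduces to showing: if the front of $\L$ admits a normal ruling, then the coefficient of $a^{-\tb(\L)-1}$ in $F_K$ is nonzero.

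To show this, introduce the ungraded ruling polynomial
$$R_\L(z)=\sum_{\rho} z^{\,j(\rho)},\qquad j(\rho)=\#\mathrm{switches}(\rho)-\#\mathrm{right\ cusps}+1,$$
where the sum runs over all normal rulings $\rho$ of the front. This is a sum of monomials with coefficient $+1$. Hence $R_\L\neq 0$ exactly when at least one normal ruling exists; there is no cancellation. The core step is Rutherford's identity: $R_\L(z)$ equals the coefficient of $a^{-\tb(\L)-1}$ in $F_K(a,z)$, up to a fixed normalization. I would prove it by showing that both sides satisfy the same local skein relations for fronts. These are the Kauffman relation at a crossing, $\Lambda_+ +\Lambda_- = z(\Lambda_0+\Lambda_\infty)$, read off at the extremal $a$-degree, together with the relations for a zig-zag and for a trivial unknot component. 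One side is handled by pairing normal rulings according to the switch or non-switch status at a chosen crossing. The other side is handled by checking that only the top-degree terms survive in each resolution, since resolving a crossing changes $\wr$ and the cusp count in a controlled way. Both quantities are then determined by the same recursion, which starts from the standard unknot front, where each equals $1$.

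The main obstacle is this identity, and in particular the bookkeeping showing that each of the four resolutions of a crossing again produces a front whose $\tb$ shifts by exactly the amount needed to stay in the extremal $a$-degree. Resolutions that drop below that degree must contribute nothing. They must also match the rulings that either switch or cross at that crossing, with normality preserved. I would first try to put the front in a standard plat position, with all crossings between the left and right cusps arranged in a sequence of elementary braid-like moves. Then I would induct on the number of crossings, proving the local relations one crossing at a time. Once the identity holds, the assumed normal ruling gives $R_\L\neq0$, so the Kauffman bound is attained and $\tb(\L)=\tb_{\max}(K)$.
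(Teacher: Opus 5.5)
Your proposal is correct and follows the route of Rutherford's proof, which the paper cites rather than reproduces. That route combines the Kauffman bound $\tb\le-\max\deg_a F_K-1$, Rutherford's theorem identifying the ungraded ruling polynomial with the coefficient of $a^{-\tb-1}$ (your ``up to normalization'' is harmless for nonvanishing), and the observation that a nonempty count of normal rulings cannot cancel. If you want to reprove that identity rather than cite it, induction on the number of crossings alone will not close: the crossing-changed tangle can only be drawn as a front with extra cusps and no fewer crossings, so the recursion also needs the Legendrian isotopy invariance of the ruling polynomial (Chekanov--Pushkar) and a separate termination argument.
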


	We now turn to the convex surface techniques used in the proof.

\begin{definition}
	An oriented, embedded surface $\Sigma$ in a contact 3-manifold is called a \emph{convex surface} if there exists a vector field transverse to $\Sigma$ whose flow preserves the contact structure. We call such a vector field a \emph{contact vector field}.
\end{definition}

For a Legendrian curve $\gamma\subset\Sigma$, we write
$\tw(\gamma,\Sigma)$ for the twisting of its contact framing
with respect to the framing induced by $\Sigma$.
We will use the fact that closed embedded surfaces can be
$C^\infty$-perturbed to be convex \cite{Giroux91}. More generally,
a compact surface $\Sigma$ with Legendrian boundary can be made convex
provided that $\tw(\gamma,\Sigma)\leq 0$ for every boundary component $\gamma$ \cite{Kanda1998,HondaOnTheI}.

The main object associated to a convex surface is its dividing set.

\begin{definition}
	Consider a contact vector field $v$ on a convex surface $\Sigma \subset (Y, \xi)$. The \emph{dividing set} on $\Sigma$ is $\Gamma_\Sigma=\{x\in \Sigma | v(x)\in \xi(x)\}$. In other words, if $\xi$ is the kernel of the 1-form $\alpha$ then $\Gamma_\Sigma=\{x\in \Sigma | \alpha(v(x))=0\}$.
\end{definition}

The dividing set $\Gamma_\Sigma$ is a finite union of disjoint smooth
curves and properly embedded arcs, and its isotopy type is independent
of the choice of the transverse contact vector field \cite{Giroux91}. Therefore, it is legitimate that we often refer to the dividing set as \emph{dividing curves}. The dividing set carries essential information of the contact structure on a neighborhood of the convex surface. 

On a convex torus in standard form, we use the usual terminology of
\emph{Legendrian divides} and \emph{Legendrian rulings}.
Giroux's Flexibility Theorem \cite{Giroux91} allows us to choose the Legendrian rulings
conveniently -- we need the following standard consequence:

\begin{lemma}\cite{HondaOnTheI}\label{lemma:ruling_barmi}
	Let $T$ be a convex torus in $(Y,\xi)$. Consider the identification of $T$ with $\faktor{\mathbb{R}^2}{\mathbb{Z}^2}$ with coordinates $(x,y)$. Let the Legendrian rulings be the curves $y=rx+b$, and the Legendrian divides $y=\frac{k}{2n}$. Then we can modify the slopes of the rulings by a $C^0$-small perturbation near the divides from $r\neq0$ to any other $r'\neq 0$, including $r'=\infty$.
\end{lemma}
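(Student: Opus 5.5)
This lemma is the standard flexibility statement for convex tori, so I will not rebuild convex surface theory. I plan to deduce it from Giroux's Flexibility Theorem \cite{Giroux91}: the contact structure near a convex surface is determined, up to isotopy, by its dividing set, and any singular foliation divided by $\Gamma_T$ can be realized as the characteristic foliation of a $C^0$-small perturbation of $T$ (through convex surfaces transverse to the same contact vector field).

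Step 1. I fix the standard form. The dividing set consists of $2n$ parallel horizontal curves $y=\frac{2j+1}{4n}$. The Legendrian divides are the horizontal lines $y=\frac{k}{2n}$, which sit in the middle of each annulus cut out by $\Gamma_T$. Away from the divides the characteristic foliation is the linear ruling of slope $r$.

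Step 2. I construct a new foliation $\mathcal F'$ on $T$ that is divided by the same $\Gamma_T$. I keep the divides as closed lines of singularities. On each complementary annulus, which contains exactly one dividing curve, I take the linear foliation of slope $r'$. In a thin collar around each divide I interpolate, bending the leaves so that they limit onto the singular line.

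Step 3. I check that $\mathcal F'$ is divided by $\Gamma_T$, meaning there is an area form and a transverse vector field whose flow expands on one side of $\Gamma_T$, contracts on the other, and points out of the positive region across $\Gamma_T$. Both $\mathcal F$ and $\mathcal F'$ are transverse to $\Gamma_T$ away from the divides, which holds because $r,r'\neq 0$ and $\Gamma_T$ is horizontal. This transversality is exactly why both slopes must be nonzero. Near the divides, both foliations agree with the Morse--Bott model of the standard form. Given this, Giroux flexibility supplies an isotopy, supported near $T$, that moves $T$ to a convex torus $T'$ with $\xi T'=\mathcal F'$. Since $T$ is already convex, the isotopy can be taken $C^0$-small.

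Step 4 is where I expect the main difficulty. Choosing the family of foliations interpolating between slope $r$ and slope $r'$ in a way that is divided by $\Gamma_T$ throughout is routine once the leaves are parametrized as graphs $x=x(y)$ or $y=y(x)$. The harder part is the locality claim: the change should happen only near the divides, which is how the lemma is phrased. To get it, I would apply flexibility relative to the complement of a neighborhood $N$ of the divides. I first isotope $\mathcal F$ through foliations that are all transverse to $\Gamma_T$, rotating the slope only inside $N$ with a bump function. Then I apply the relative version of Giroux's theorem, which keeps the surface fixed outside $N$. The case $r'=\infty$ is treated identically, with meridional leaves $x=\mathrm{const}$ that are still transverse to the horizontal $\Gamma_T$.
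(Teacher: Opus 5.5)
Your Steps 1--3 are the right core, and they are all the paper offers: it gives no proof of this lemma beyond citing Honda and calling it a standard consequence of Giroux flexibility. The unlocalised argument also suffices for how the lemma is used later (meridional rulings on $\partial Q$, ruling curves $c_{i,j}$). You state the \emph{divided by $\Gamma_T$} check only heuristically, but it can be made exact. For $a=1/r'$ (with $a=0$ when $r'=\infty$), the field $X=\sin(2\pi n y)\,(a\,\partial_x+\partial_y)$ directs the slope-$r'$ standard form and vanishes exactly on the divides. Moreover, $\operatorname{div}_{dx\wedge dy}X=2\pi n\cos(2\pi n y)$ does not depend on $a$. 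So, up to an overall sign, every nonzero slope, and indeed every $y$-dependent $a(y)$, is divided by the same $\Gamma_T=\{\cos(2\pi n y)=0\}$ with the same $R_\pm$. No bending in collars is needed: the lines $y=r'x+b$ already meet the divides transversally, which is precisely the standard form.

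The genuine gap is Step 4, which contradicts Step 2. If the surface is kept fixed outside a neighbourhood $N$ of the divides, its characteristic foliation there is unchanged. So in the given coordinates the leaves outside $N$ still have slope $r$, and rotating the slope inside $N$ with a bump function does not yield rulings of slope $r'$. The foliation $\mathcal F'$ of Step 2 differs from $\xi T$ on the bulk of every annulus, so it cannot be reached by an isotopy supported in $N$.

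The missing idea is to change the identification $T\cong\mathbb{R}^2/\mathbb{Z}^2$ by a shear $\Phi(x,y)=(x+s(y),y)$ isotopic to the identity, with $s'\equiv 1/r'-1/r$ outside $N$ and $\int_0^1 s'\,dy=0$. In the new coordinates the untouched leaves already have slope $r'$. Only inside $N$, which lies in $R_\pm$ away from $\Gamma_T$, must the foliation be replaced, in the old coordinates, by leaves with $dx/dy=a_1(y):=1/r'-s'(y)$. This foliation is divided by $\Gamma_T$ by the formula above.

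This localisation is also what gives $C^0$-smallness; \emph{since $T$ is already convex} is not a reason. Because the germ of $\xi$ along $T$ is determined by $\xi T$, you may work in the vertically invariant model $\alpha=\cos(2\pi n y)\,dt+\sin(2\pi n y)(a_0\,dy-dx)$ with $a_0=1/r$. There the graph $t=g(y)$ has leaves $dx/dy=a_0+g'(y)\cot(2\pi n y)$. Take $g'=(a_1-a_0)\tan(2\pi n y)$, which is supported in $N$, and choose $s'$ symmetric about each divide so that $g$ vanishes outside $N$. On strips of width $\delta$ you have $|a_1-a_0|=O(1/\delta)$ but $\tan(2\pi n y)=O(\delta)$, hence $|g|=O(\delta)$. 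Without the reparametrisation no graph can do the job at all: along $\Gamma_T$, where $\partial_t\in\xi$, every graph over $T$ induces the same characteristic foliation as $T$.
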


Studying the dividing set, we can also tell whether there exists a tight contact neighborhood of the convex surface:

\begin{theorem}[Giroux's Criterion, Giroux, \cite{GirouxStructures}]
	\label{thm:GirouxCriterion}
	Let $\Sigma\subset(Y,\xi)$ be a closed convex surface or a compact
	convex surface with Legendrian boundary, and let $\Gamma_\Sigma$
	denote its dividing set. If $\Sigma\neq S^2$, then $\Sigma$ has a
	tight neighborhood if and only if $\Gamma_\Sigma$ has no
	homotopically trivial closed components. If $\Sigma=S^2$, then
	$\Sigma$ has a tight neighborhood if and only if $\Gamma_\Sigma$
	is connected.
\end{theorem}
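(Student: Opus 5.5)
The criterion has two directions, and I would treat them separately. Throughout, I would use Giroux's Flexibility Theorem: the germ of $\xi$ near a convex surface is determined up to isotopy by $\Gamma_\Sigma$. This lets me reason about a vertically invariant neighbourhood $\Sigma\times[-\varepsilon,\varepsilon]$ built from any characteristic foliation that $\Gamma_\Sigma$ divides.

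\emph{Necessity (an obstruction produces an overtwisted disk).} First suppose $\Sigma\neq S^2$ and $\Gamma_\Sigma$ has a homotopically trivial closed component $\gamma$. Then $\gamma$ bounds a disk $D\subset\Sigma$, and after passing to an innermost such curve I may assume $D\cap\Gamma_\Sigma=\gamma$. Let $\delta$ be a simple closed curve parallel to $\gamma$, just outside $D$, so that $\delta$ bounds a disk $D'\supset D$ with $\delta\cap\Gamma_\Sigma=\emptyset$.

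I would then apply the Legendrian Realization Principle to make $\delta$ Legendrian. Its twisting is $\tw(\delta,D')=-\tfrac12\#(\delta\cap\Gamma_\Sigma)=0$. Since $D'$ lies in $\Sigma$, this gives an overtwisted disk inside every neighbourhood of $\Sigma$.

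For $\Sigma=S^2$ with disconnected $\Gamma_\Sigma$, I would choose $\delta$ in an annulus separating two components of $\Gamma_\Sigma$. Both complementary disks then contain dividing curves, so $\delta$ is nonisolating and the same argument applies.

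The hypothesis of Legendrian realization, that every component of $\Sigma\setminus\delta$ meets $\Gamma_\Sigma$, is the delicate point. I expect it to be the main obstacle in the case $\Gamma_\Sigma=\gamma$ on a higher-genus surface. There the outer component of $\Sigma\setminus\delta$ carries no dividing curve. In that case I would choose a foliation on the outer region by hand, with a closed Legendrian leaf parallel to $\gamma$. This is allowed because that region is, say, entirely negative and so admits a foliation divided by the empty set away from $\gamma$. The closed leaf bounds a disk with twisting $0$.

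\emph{Sufficiency (no obstruction gives a tight neighbourhood).} For $S^2$ with a single dividing curve, I would exhibit a model. A round sphere in $(\mathbb{R}^3,\xi_{\mathrm{std}})$, perturbed to be convex, has connected dividing set. By Flexibility, any convex sphere with connected $\Gamma$ has a neighbourhood contactomorphic to a neighbourhood of this sphere, which is tight.

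For $\Sigma\neq S^2$ with no homotopically trivial dividing curves, I would follow Giroux. Pass to the universal cover $\widetilde\Sigma\times\mathbb{R}$, which is $\mathbb{R}^2\times\mathbb{R}$ unless $\Sigma$ is a disk with Legendrian boundary, where the argument is direct. The lifted dividing set is then a collection of properly embedded lines or arcs, with no closed curves. I would then construct an explicit embedding of the $\mathbb{R}$-invariant contact structure on $\widetilde\Sigma\times\mathbb{R}$ into $(\mathbb{R}^3,\xi_{\mathrm{std}})$. To do this, I would straighten the lifted dividing set to parallel lines, or to a tree-like family of arcs, and write the contact form as $dz+\beta$ with $\beta$ adapted to that picture. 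This shows the cover is tight, and tightness of the cover implies tightness of $\Sigma\times[-\varepsilon,\varepsilon]$. The main technical work in this direction is the explicit embedding for non-parallel families of dividing lines.
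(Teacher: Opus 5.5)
The paper quotes this criterion from Giroux without proof, so there is no internal argument to compare with.

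Your necessity argument works whenever the Legendrian Realization Principle applies: on $S^2$ with disconnected $\Gamma_\Sigma$, and on $\Sigma\neq S^2$ whenever $\Gamma_\Sigma$ has a component besides the innermost trivial circle $\gamma$. The latter covers every surface whose Legendrian boundary components all have negative twisting, since dividing arcs then reach $\partial\Sigma$. The gap is your workaround for the remaining case, $\Gamma_\Sigma=\gamma$ on a closed surface of genus $g\geq1$, and it cannot be repaired in that form. A foliation divided by $\Gamma_\Sigma$ is directed by a vector field $X$ with $\operatorname{div}_\omega X<0$ on $R_-$ for some area form $\omega$. Suppose $\delta$ were a closed leaf, or any Legendrian curve, and let $F=\Sigma\setminus\mathrm{int}(D')\subset R_-$. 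Since $X$ is tangent to $\delta$, Stokes gives
\[
0>\int_F(\operatorname{div}_\omega X)\,\omega=\int_{\delta}\iota_X\omega=0 .
\]
So no convex surface carries a Legendrian curve that cuts off a region containing no dividing curves. The nonisolating hypothesis of the Legendrian Realization Principle is therefore necessary, not a technicality you can bypass by choosing the foliation by hand. More fundamentally, by Flexibility the contact germ near $D'$ coincides with the germ near a disk on a convex sphere with connected dividing set in $(\mathbb{R}^3,\xi_{\mathrm{std}})$, so it is tight. No overtwisted disk can be found near $\gamma$ at all; the obstruction must use the topology of $\Sigma\setminus D'$.

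To close this case you need a global argument. For instance,
\[
\langle e(\xi),[\Sigma]\rangle=\chi(R_+)-\chi(R_-)=\pm\bigl(1-(1-2g)\bigr)=\pm 2g .
\]
This violates Eliashberg's inequality $|\langle e(\xi),[\Sigma]\rangle|\leq 2g-2$ for tight structures, so every neighbourhood of $\Sigma$ is overtwisted. The relative Bennequin bound $\tw(\partial\Sigma,\Sigma)+|\rot(\partial\Sigma)|\leq-\chi(\Sigma)$ handles Legendrian boundary components of twisting $0$ in the same way.

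Separately, your sufficiency direction is an outline of Giroux's argument rather than a proof. The step you flag yourself is embedding the $\mathbb{R}$-invariant structure on $\widetilde\Sigma\times\mathbb{R}$ into $(\mathbb{R}^3,\xi_{\mathrm{std}})$ when the lifted dividing lines form a non-parallel, tree-like family. That step is essentially the whole content of that direction, and it is only asserted.
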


Convex surfaces also allow suitable curves to be made Legendrian,
through the Legendrian Realization Principle of Kanda \cite{KandaClassification}, generalized by Honda \cite{HondaOnTheI}.

\begin{theorem}[Legendrian Realization Principle, Kanda, \cite{KandaClassification}, Honda, \cite{HondaOnTheI}]\label{thm:LegendrianRealizationPrinciple}
	Let $\Sigma\subset (Y, \xi)$ be a convex surface and $C\subset \Sigma$ a properly embedded multicurve which is \emph{nonisolating}, that is, transverse to $\Gamma$ and the closure of every component of $\Sigma\setminus C$ intersects $\Gamma$. Then $C$ can be made Legendrian. In other words, there exists an isotopy $\phi_t$, $t\in [0,1]$ of $\Sigma$ such that $\phi_0(\Sigma)=\Sigma$, the dividing curves $\phi_1(\Gamma)=\Gamma_{\phi_1(\Sigma)}$, the surface $\phi_t(\Sigma)$ is convex for all $t$, and $\phi_1(C)$ is Legendrian.
\end{theorem}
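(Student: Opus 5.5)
The Legendrian Realization Principle (Theorem \ref{thm:LegendrianRealizationPrinciple}) is a standard result quoted from Kanda and Honda, so I would prove it along their lines rather than from scratch. The idea is to reduce everything to Giroux's Flexibility Theorem. A convex surface $\Sigma$ with contact vector field $v$ has its germ of contact structure determined, up to isotopy, by the dividing set $\Gamma$. More precisely, if $\mathcal F$ is any singular foliation on $\Sigma$ that is \emph{divided} by $\Gamma$, then there is an isotopy $\phi_t$ through convex surfaces, fixing $\Gamma$ and supported near $\Sigma$, such that the characteristic foliation of $\phi_1(\Sigma)$ is $\phi_1(\mathcal F)$. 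Given this, it suffices to build a singular foliation $\mathcal F$ on $\Sigma$ that satisfies two conditions: it is divided by $\Gamma$, and it contains the multicurve $C$ as a union of leaves and singular points. Then $\phi_1(C)$ is tangent to the characteristic foliation, so it is Legendrian.

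I would construct $\mathcal F$ in three steps.
\begin{enumerate}
\item Write $\Sigma\setminus\Gamma=\Sigma_+\sqcup\Sigma_-$. Choose an area form $\omega$ on $\Sigma$ and a vector field $X$ that expands $\omega$ on $\Sigma_+$, contracts it on $\Sigma_-$, and points transversely out of $\Sigma_+$ along $\Gamma$. This is exactly the condition for the line field of $X$ to be divided by $\Gamma$.
\item Since $C$ is transverse to $\Gamma$, each component of $C\cap\Sigma_\pm$ is an arc or a closed curve. I would first put $X$ in standard form near $C\cap\Gamma$, as straight flow lines crossing $\Gamma$, so that $C$ is a flow line there.
\item Along the rest of $C$, modify $X$ inside a collar so that $C$ becomes a union of flow lines. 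On arcs in $\Sigma_+$ I place positive elliptic or hyperbolic singularities. A closed component of $C$ inside $\Sigma_+$ needs at least one singularity, and I would insert a pair of an elliptic point and a hyperbolic point.
\end{enumerate}

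The main obstacle is keeping the expanding (respectively contracting) condition $\mathrm{div}_\omega X>0$ on $\Sigma_+$ after making $C$ invariant. Locally this is easy to arrange. Globally, however, $\Sigma_+$ is cut along $C$ into pieces, and on each piece $P$ the flux identity
\[
\int_P \mathrm{div}_\omega X\,\omega=\int_{\partial P}\iota_X\omega
\]
requires the flux out of $P$ to be positive. This is precisely where the nonisolating hypothesis enters. Each component of $\Sigma\setminus C$ meets $\Gamma$, so every piece $P$ of $\Sigma_+$ has part of its boundary on $\Gamma$, where $X$ exits transversely. Flux can therefore escape, and a Liouville-type vector field exists on $P$. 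I would construct it piece by piece by choosing a function $f$ with $X$ gradient-like for $f$, with $f$ increasing toward $\Gamma$ and $C$ consisting of flow lines of $f$. Then I would rescale $\omega$ on each piece so that the divergence becomes positive; this is Honda's trick of adjusting the area form rather than the vector field.

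Gluing the pieces, together with the analogous construction on $\Sigma_-$, gives a global divided foliation $\mathcal F$. Applying Flexibility then yields the isotopy $\phi_t$ with $\phi_1(C)$ Legendrian and $\phi_1(\Gamma)=\Gamma_{\phi_1(\Sigma)}$, as the theorem asserts.
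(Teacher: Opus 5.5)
Your proposal is correct and is essentially the standard Kanda--Honda argument that the paper relies on by citation; the paper gives no proof of its own. That argument reduces to Giroux's Flexibility Theorem by constructing a foliation divided by $\Gamma$ that contains $C$ as a union of leaves and singular points, with the nonisolating hypothesis giving every piece of $\Sigma_\pm\setminus C$ an exit through $\Gamma$, which is exactly what the expansion condition needs. One detail to tidy: choose the Morse function $f$ and the rescaled area form $e^{Kf}\omega_0$ globally on $\Sigma_\pm$, smooth across $C$ and matched in a collar of $\Gamma$, rather than independently on each piece.
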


Neighborhoods of Legendrian knots admit a standard contact model:

\begin{theorem}[Standard Contact Neighborhood Theorem,
	\cite{Geiges}]
	\label{thm:standard_neighborhood}
	Every neighborhood $U$ of a Legendrian knot $\L\subset(Y,\xi)$
	contains a tubular neighborhood $N(\L)$ which is contactomorphic to
	a neighborhood of the Legendrian core
	$$
	L_0=\{((x,y),z)\in\mathbb{R}^2\times S^1\mid x=y=0\}
	$$
	in
	$$
	\left(
	\mathbb{R}^2\times S^1,
	\ker\bigl(\sin(2\pi z)\,dx+\cos(2\pi z)\,dy\bigr)
	\right),
	$$
	with $\L$ mapped to $L_0$. We call $N(\L)$ a
	\emph{standard contact neighborhood} of $\L$.
\end{theorem}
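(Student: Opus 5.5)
The statement to be proved is the Standard Contact Neighborhood Theorem (Theorem~\ref{thm:standard_neighborhood}). I would prove it by the usual Moser-type argument for Legendrian submanifolds, in three steps.

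\emph{Step 1: the model core is Legendrian.} Write $\alpha_0=\sin(2\pi z)\,dx+\cos(2\pi z)\,dy$ on $\mathbb{R}^2\times S^1$. Then
$$
\alpha_0\wedge d\alpha_0=-2\pi\,dx\wedge dy\wedge dz\neq 0,
$$
so $\ker\alpha_0$ is a contact structure. Along $L_0$ the tangent vector is $\partial_z$ and $\alpha_0(\partial_z)=0$, so $L_0$ is Legendrian.

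\emph{Step 2: match first-order data along the knot.} Parametrize $\L$ by $z\in S^1=\mathbb{R}/\mathbb{Z}$. Choose a contact form $\alpha$ for $\xi$ near $\L$; the paper's setting is co-oriented, and in general one can pass to a neighborhood of $\L$ where $\xi$ is co-orientable. Along $\L$ the plane $\xi$ contains $T\L$. Since $\xi|_{\L}$ is a rank-$2$ bundle over $S^1$ containing the trivial line $T\L$, the quotient line bundle $\xi|_{\L}/T\L$ is orientable, hence trivial. The normal bundle $\nu\L=TY|_{\L}/T\L$ is then trivial as well. Pick a normal framing $(e_1,e_2)$ of $\L$ with the following property: at the point with parameter $z$, the span of $e_1$ and $e_2$ meets $\xi$ in the line spanned by $\cos(2\pi z)e_1-\sin(2\pi z)e_2$. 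This is exactly the model's behaviour at $L_0$, since $\ker\alpha_0$ at $(0,0,z)$ is spanned by $\partial_z$ and $\cos(2\pi z)\partial_x-\sin(2\pi z)\partial_y$. Such a framing can always be found: the rotation of the model's contact line costs one full turn, and that turn can be absorbed by choosing which normal framing to use, adjusting it by a twist.

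Define $\phi(x,y,z)=\exp_{\L(z)}(xe_1+ye_2)$. This is a diffeomorphism from a neighborhood of $L_0$ onto a neighborhood of $\L$. Along $L_0$, the form $\phi^*\alpha$ has the same kernel as $\alpha_0$. After rescaling $\alpha$ by a positive function, we get $\phi^*\alpha=\alpha_0$ at points of $L_0$. Adjusting $\phi$ to second order if needed, we also get $d\phi^*\alpha=d\alpha_0$ along $L_0$. This is possible because both $2$-forms restrict to symplectic forms on the contact planes, and the Legendrian line is Lagrangian in each.

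\emph{Step 3: Moser's trick.} Set $\alpha_t=(1-t)\alpha_0+t\phi^*\alpha$. These agree with $\alpha_0$, to first order, along $L_0$, so they are contact forms on a small neighborhood for all $t\in[0,1]$. Solve for a time-dependent vector field $X_t\in\ker\alpha_t$ satisfying
$$
\iota_{X_t}d\alpha_t|_{\ker\alpha_t}=-\dot\alpha_t|_{\ker\alpha_t}.
$$
Then the flow of $X_t$ satisfies $\psi_t^*\alpha_t=\lambda_t\alpha_0$ for some positive functions $\lambda_t$. Since $\dot\alpha_t$ vanishes along $L_0$, $X_t$ vanishes there. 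The flow is therefore defined on a smaller neighborhood and fixes $L_0$. The composition $\phi\circ\psi_1$ is the desired contactomorphism, and by shrinking the neighborhood its image lies inside any prescribed $U$.

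The main obstacle is Step 2, namely arranging that the linearized contact planes agree with the model along the whole circle. This needs the line-bundle triviality argument and the correct twisting of the normal framing. Everything after that is the standard Gray--Moser argument, so in the paper I would simply cite \cite{Geiges} and sketch these steps.
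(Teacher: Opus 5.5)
Your overall route, matching first-order data along the knot and then running a Moser--Gray argument, is the standard proof in Geiges. That citation is all the paper relies on here; it states the theorem and gives no proof. Your Step~3 is fine as written. Step~2, however, has a real gap.

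First a correction: $\alpha_0\wedge d\alpha_0=+2\pi\,dx\wedge dy\wedge dz$ (at $z=0$ it equals $dy\wedge 2\pi\,dz\wedge dx$). So the model is a \emph{positive} contact structure, like $\xi_{\mathrm{std}}$. This matters because your condition on $(e_1,e_2)$ does not fix the orientation of the frame. Let $v$ span the contact line in $\nu\L$ and let $(v,w)$ frame $\nu\L$. Rotating either $(v,w)$ or $(v,-w)$ by one full turn in the appropriate sense gives a framing satisfying your condition, and the two framings have opposite orientations. If $(e_1,e_2,\dot{\L})$ is negatively oriented, then $\phi^*\xi$ is a negative contact structure whose planes still agree with $\ker\alpha_0$ along $L_0$. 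The planes along $L_0$ do not detect handedness: $\beta=\alpha_0-4\pi\bigl(y\sin(2\pi z)-x\cos(2\pi z)\bigr)\,dz$ equals $\alpha_0$ along $L_0$, yet $\beta\wedge d\beta=-2\pi\,dx\wedge dy\wedge dz$ there. In that case three things go wrong:
\begin{itemize}
\item $d(\phi^*\alpha)=d\alpha_0$ along $L_0$ is impossible;
\item $(1-t)\alpha_0+t\,\phi^*\alpha$ fails to be contact along $L_0$ for some $t$;
\item no map isotopic to $\phi$ can be a contactomorphism, since $\phi$ reverses orientation while both structures are positive.
\end{itemize}

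Second, even with the right orientation, ``adjusting $\phi$ to second order'' cannot produce $d(\phi^*\alpha)=d\alpha_0$ along $L_0$. The value $(\phi^*d\alpha)_p=d\alpha_{\phi(p)}(d\phi_p\,\cdot\,,d\phi_p\,\cdot\,)$ depends only on $d\phi_p$, and your rescaling only adds $df\wedge\alpha$. The matching has to be built into the first-order data, which is what Geiges does. Concretely:
\begin{itemize}
\item Let $R_0$ and $R$ be the Reeb fields of $\alpha_0$ and $\alpha$ along $L_0$ and $\L$.
\item Choose $b_0\in\ker\alpha_0$ with $d\alpha_0(\partial_z,b_0)=1$ and $b\in\xi$ with $d\alpha(\dot{\L},b)=1$. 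These are affine conditions, so they can be solved continuously around the circle.
\item Let $A$ be the bundle isomorphism $\partial_z\mapsto\dot{\L}$, $b_0\mapsto b$, $R_0\mapsto R$. Then $A^*\alpha=\alpha_0$ and $A^*d\alpha=d\alpha_0$ along $L_0$.
\item Take $e_1=A\partial_x$ and $e_2=A\partial_y$ in your exponential map, so that $d\phi=A$ along $L_0$.
\end{itemize}
Your line condition, the correct twisting and the orientation then come for free, and no rescaling is needed.

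Alternatively, in dimension three you can skip matching $d\alpha$ altogether. Require $(e_1,e_2,\dot{\L})$ to be positively oriented, and replace $\alpha$ by $-\alpha$ if needed so that $f>0$. Since $\alpha_0=\phi^*(f\alpha)$ along $L_0$, there you have $\alpha_t\wedge d\alpha_t=(1-t)\,\alpha_0\wedge d\alpha_0+t\,\phi^*(f^2\,\alpha\wedge d\alpha)>0$. With either repair, Step~3 goes through unchanged.

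A minor point: $\xi|_{\L}/T\L$ is trivial because $\xi|_{\L}$ is oriented by $d\alpha|_{\xi}$, not merely because it contains the trivial line $T\L$.
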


We choose a standard contact neighborhood $N(\L)$ such that
$\partial N(\L)$ is convex and in standard form. Let $\mu$ and
$\lambda$ denote the meridian and the Seifert longitude on
$\partial N(\L)$, respectively, and identify
$\partial N(\L)$ with $\mathbb{R}^2/\mathbb{Z}^2$ so that
$\mu$ corresponds to $(1,0)$ and $\lambda$ to $(0,1)$.
We define the slope of a curve representing $p\mu+q\lambda$ to be
$q/p$. If $\tb(\L)=n$, then the dividing set on
$\partial N(\L)$ consists of two parallel curves of slope $1/n$
\cite[Section 1.1]{EtnyreHonda05cabling}.

To glue convex pieces back together, we will repeatedly use Honda's \emph{edge-rounding} procedure \cite{HondaOnTheI}: when two convex surfaces meet along a common
Legendrian boundary component, the edge can be rounded so that the
dividing curves connect according to the local model shown in
Figure \ref{fig:edge_rounding}.

\begin{theorem}[Edge Rounding, Honda, \cite{HondaOnTheI}]\label{thm:edge_rounding}
	For $i=1,2$, let $\Sigma_i$ be a convex surface with dividing set $\Gamma_i$ and assume that $\partial \Sigma_1=\partial \Sigma_2$ is Legendrian. Using the standard local model around $\partial \Sigma_i$ in $\mathbb{R}^3$ we can write $\Sigma_1=\{(x,y,z)| x=0, y\geq 0\}$ and $\Sigma_2=\{(x,y,z)| y=0, x\geq 0\}$. Then we can glue $\Sigma_1$ to $\Sigma_2$ by rounding the edge $\partial \Sigma_1=\partial \Sigma_2$ to get a smooth surface $\Sigma$ so that the dividing curves $\Gamma_i$ connect up to form the dividing set on $\Sigma$ according to Figure \ref{fig:edge_rounding}. We can model the edge rounding by replacing a small neighborhood $\nu(\partial\Sigma)$ with the intersection \mbox{$\nu(\partial\Sigma)\cap \{(x,y,z) | (x-\rho)^2+(y-\rho)^2=\rho^2\}$} for a suitable $\rho\in \mathbb{R}$.
\end{theorem}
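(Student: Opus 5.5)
The plan is to reduce everything to an explicit computation in the local model given by the Standard Contact Neighborhood Theorem (Theorem \ref{thm:standard_neighborhood}). Near the common Legendrian boundary $\partial\Sigma_1=\partial\Sigma_2$, I would identify a neighborhood with a neighborhood of $L_0=\{x=y=0\}$ in $\mathbb{R}^2\times S^1$ with $\alpha=\sin(2\pi z)\,dx+\cos(2\pi z)\,dy$. After a small isotopy fixing the edge (using that the twisting of the contact framing relative to each surface is determined by the dividing set), I would arrange that $\Sigma_1=\{x=0,\,y\ge0\}$ and $\Sigma_2=\{y=0,\,x\ge0\}$ in this model.

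The first step is to check convexity of the two half-planes and locate their dividing curves near the edge. Since $\alpha$ has no $x$- or $y$-dependence, the translation fields $\partial_x$ and $\partial_y$ are contact vector fields. The field $\partial_x$ is transverse to $\Sigma_1$, and $\alpha(\partial_x)=\sin(2\pi z)$. Hence $\Gamma_{\Sigma_1}$ consists of the arcs $z\in\frac12\mathbb{Z}$, which run perpendicularly into the edge. Similarly $\partial_y$ is transverse to $\Sigma_2$, and $\Gamma_{\Sigma_2}=\{\cos(2\pi z)=0\}$, i.e.\ the arcs $z\in\frac14+\frac12\mathbb{Z}$. So along the edge the endpoints of $\Gamma_1$ and $\Gamma_2$ interlace, offset by a quarter period. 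This is exactly the configuration in Figure \ref{fig:edge_rounding}.

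Next I would replace a neighborhood of the corner by the cylinder piece $(x-\rho)^2+(y-\rho)^2=\rho^2$. I parametrize it by $(\theta,z)$, where $\theta\in[0,\pi/2]$ is the angle at the center $(\rho,\rho)$, so that the outward (toward the corner) normal direction is $v_\theta=-(\cos\varphi(\theta)\,\partial_x+\sin\varphi(\theta)\,\partial_y)$ with $\varphi$ increasing from $0$ to $\pi/2$. Formally the dividing set is where $\alpha(v_\theta)=0$, namely $\sin(2\pi z+\varphi)=0$, i.e.\ $z=-\varphi/2\pi+\frac k2$. As $\varphi$ runs from $0$ to $\pi/2$, each dividing arc of $\Sigma_1$ at height $k/2$ is carried monotonically to height $k/2-\frac14$, where it meets a dividing arc of $\Sigma_2$. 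This gives the claimed connection pattern: each arc of $\Gamma_1$ is joined to the adjacent arc of $\Gamma_2$ shifted by a quarter turn in the direction dictated by the sign conventions, which I would match to the figure.

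The main obstacle is making the last step rigorous. A position-dependent combination of $\partial_x$ and $\partial_y$ is not itself a contact vector field, so I cannot literally use $v_\theta$. I would handle this through contact Hamiltonians. Choose a function $H=\alpha(V)$ on a neighborhood of the rounded surface, equal to $\sin(2\pi z)$ near $\Sigma_1$ and to $\cos(2\pi z)$ near $\Sigma_2$, interpolating as $\sin(2\pi z+\varphi(x,y))$. Let $V$ be the corresponding contact vector field; its zero set of $H$ on the surface is the dividing set. The extra terms in $V$ coming from $d\varphi$ are tangent to the $\theta$-direction and of size $|d\varphi|\sim1/\rho$. I would then check that $V$ stays transverse to the rounded surface: its component along the normal is governed by $H$ and $\partial_z H$, which never vanish simultaneously. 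Alternatively, I would use the conformal symmetry $(x,y)\mapsto(cx,cy)$, which rescales $\alpha$, to reduce to a fixed $\rho$ and verify transversality on a compact model.
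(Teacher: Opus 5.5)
The paper gives no proof of this statement; it is quoted from Honda \cite{HondaOnTheI}. Your plan is essentially the local computation behind Honda's edge-rounding lemma. You identify the dividing sets $\{\sin 2\pi z=0\}$ on $\Sigma_1$ and $\{\cos 2\pi z=0\}$ on $\Sigma_2$, then track the zero set of $\sin(2\pi z+\varphi)$ across the corner, so $z=k/2$ connects to $z=k/2-1/4$. Replacing the formal normal-vector computation by an honest contact vector field via a Hamiltonian is the right way to make it rigorous. Two things need fixing.

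First, your reduction to the model of Theorem \ref{thm:standard_neighborhood} only works when $\tw(\partial\Sigma_i,\Sigma_i)=-1$. In that model the half-plane $\{x=0,\,y\geq0\}$ has twisting $-1$, and an isotopy fixing the edge cannot change this twisting. If the edge meets $\Gamma_i$ in $2n$ points (for instance the disks $D_i$ later in the paper), you must start from $\alpha_n=\sin(2\pi nz)\,dx+\cos(2\pi nz)\,dy$. The argument is then verbatim, and $z=\frac{k}{2n}$ connects to $z=\frac{k}{2n}-\frac{1}{4n}$.

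Second, the transversality step is misdescribed. The fact that $H$ and $\partial_zH$ never vanish simultaneously only shows that the horizontal part of $V$ is nonzero, not that $V$ is transverse to the surface. The honest computation is short. Take the Reeb field $R=\sin(2\pi nz)\partial_x+\cos(2\pi nz)\partial_y$, the vector $e=\cos(2\pi nz)\partial_x-\sin(2\pi nz)\partial_y\in\xi$, and $H=\sin(2\pi nz+\varphi(x,y))$. The contact vector field with $\alpha_n(V)=H$ is
\[
V=\cos\varphi\,\partial_x+\sin\varphi\,\partial_y-\frac{\cos(2\pi nz+\varphi)\,d\varphi(e)}{2\pi n}\,\partial_z .
\]
So the correction term is vertical, not in the $\theta$-direction, and hence tangent to the $z$-invariant rounded surface. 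The horizontal part is exactly the unit vector at angle $\varphi$.

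Now let $\varphi$ be the angle of the unit normal of a smooth convex rounding curve, pointing away from the corner, extended constantly along normal lines. Then $V$ is transverse on the nose, and it equals $\partial_x$ and $\partial_y$ away from the corner. No size estimate or rescaling is needed. Using a smooth convex curve instead of the exact quarter circle also avoids the fact that the circle glued to the half-planes is only $C^{1,1}$.
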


\begin{figure}[h!]
	\centering
	\includegraphics[width=0.6\textwidth]{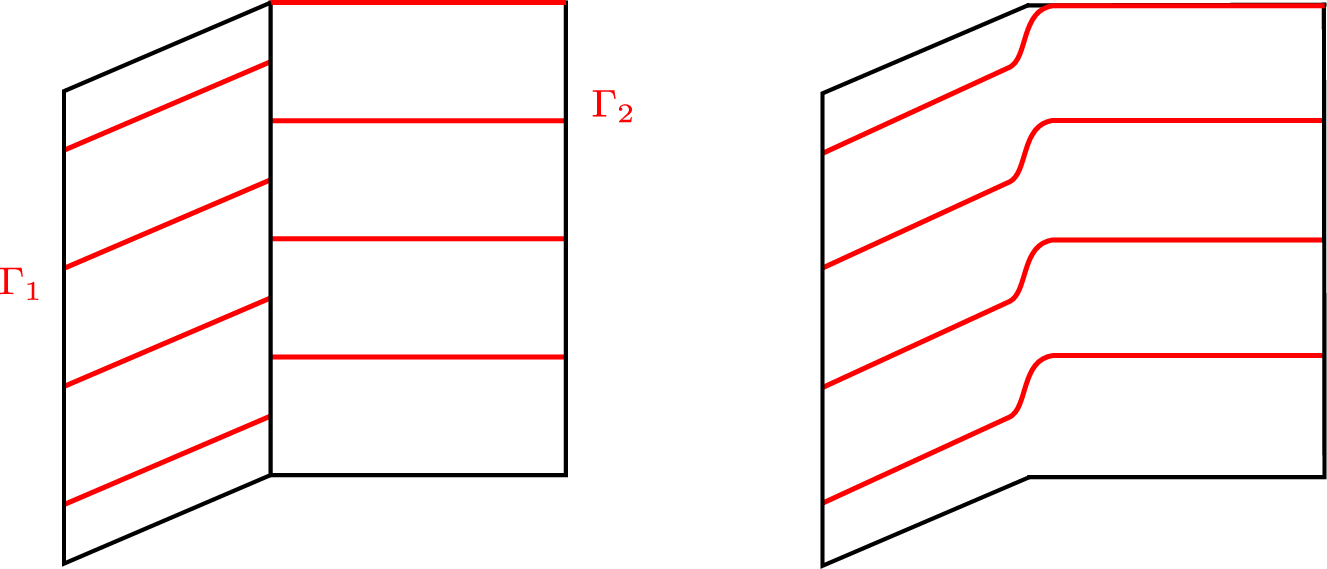}
	\caption{Edge rounding between two convex surfaces}
	\label{fig:edge_rounding}
\end{figure}

Eliashberg \cite{Eliashberg20years, Eliashberg1993} classified tight contact structures on $B^3$, $S^3$, $S^2\times S^1$ and $\mathbb{R}^3$. Of these, the most significant for our purpose is his theorem on the 3-ball.

\begin{theorem}[Eliashberg, \cite{Eliashberg20years}]\label{thm:B3unique}
	Let $\xi$ be a contact structure on a neighborhood of $\partial B^3$ such that $\partial B^3$ is convex with a connected dividing set. Then $\xi$ uniquely extends to a tight contact structure on $B^3$ up to an isotopy fixing the boundary.
\end{theorem}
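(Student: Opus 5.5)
Since Theorem~\ref{thm:B3unique} is quoted from \cite{Eliashberg20years}, we only indicate how one would prove it. The plan is to reduce the statement to a model ball. Fix a point $p$ in the interior of $B^3$ and a tiny Darboux ball $B_\varepsilon\ni p$. By Darboux's theorem, any tight $\xi$ on $B^3$ restricts on $B_\varepsilon$ to a standard ball whose boundary is a convex round sphere with connected dividing set. It therefore suffices to show two things:
\begin{itemize}
\item the structure on the collar $S^2\times[0,1]=B^3\setminus \mathrm{int}(B_\varepsilon)$ is determined, up to isotopy rel boundary, by its boundary germs;
\item any two tight extensions agree on the collar up to such an isotopy.
\end{itemize}
Combining these with the uniqueness of the Darboux ball up to isotopy (contracting by a radial contact vector field) gives the theorem.

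To handle the collar, foliate $S^2\times[0,1]$ by spheres $S_t=S^2\times\{t\}$ and perturb this one-parameter family to be generic. Then $S_t$ is convex for all but finitely many $t$, and at the exceptional values the characteristic foliation passes through a single bifurcation. By Giroux's Criterion (Theorem~\ref{thm:GirouxCriterion}) and tightness, each convex $S_t$ has a connected dividing set. Giroux flexibility then lets us isotope each convex $S_t$ so that its characteristic foliation becomes the standard one (one source, one sink, radial leaves). On convex intervals $[t_i,t_{i+1}]$ with no bifurcation, the contact structure is $I$-invariant, i.e.\ a product $S_t\times I$. Such a product is uniquely determined by $\Gamma$, and in particular by the connected dividing set.

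The main obstacle is crossing the bifurcation values. At such a $t$, the sphere fails to be convex, and a retrograde saddle--saddle connection, or equivalently a bypass attachment, changes $\Gamma_{S_t}$. We would argue as follows.
\begin{itemize}
\item Since $\Gamma$ must stay connected on both sides, any bypass attached to a sphere with one dividing curve is trivial. A trivial bypass either produces a disconnected dividing set, which is overtwisted by Theorem~\ref{thm:GirouxCriterion}, or does not change the isotopy class of $\Gamma$.
\item A trivial bypass layer is contactomorphic to an invariant neighbourhood rel boundary. This is the step where tightness really enters: one shows that the layer embeds in a standard model (the neighbourhood of a convex sphere in $\mathbb{R}^3$), in which the uniqueness can be seen by hand. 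Alternatively, following Eliashberg's original argument, one uses the elimination lemma to cancel elliptic--hyperbolic pairs and shows that degenerate closed leaves cannot occur without producing an overtwisted disk.
\end{itemize}
Concatenating the products and the trivial layers identifies any tight $\xi$ on the collar with the $I$-invariant model relative to the boundary, and Gray's theorem turns this identification into an isotopy fixing $\partial B^3$.
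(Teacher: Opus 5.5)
The paper does not prove this statement. It is quoted from Eliashberg and used as a black box, so your sketch has to stand on its own, and it has a genuine gap exactly where the content of the theorem lies.

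Everything rests on the claim that a layer between two convex spheres with connected dividing sets, produced by one bypass attachment, is contactomorphic rel boundary to an $I$-invariant collar. Together with the convex slabs, this is the assertion that a tight $S^2\times I$ with the given boundary germs is unique. By your own reduction, that is at least as strong as Theorem~\ref{thm:B3unique}.

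Embedding the layer in a neighbourhood of a convex sphere in $(\mathbb{R}^3,\xi_{\mathrm{std}})$ only shows that it is tight. The region between two convex spheres in $\mathbb{R}^3$ is exactly the kind of object whose uniqueness is in question, so nothing can be ``seen by hand'' there. Nor can you quote the trivial bypass lemma or uniqueness of $I$-invariant layers from the convex-surface literature: such uniqueness statements are typically proved by cutting along disks into balls and invoking Theorem~\ref{thm:B3unique}, which would make your argument circular.

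What is needed is an independent way to cross a bifurcation. Eliashberg's argument supplies one via the elimination lemma. Tightness lets him normalize the characteristic foliations of the whole family of concentric spheres to foliations with a single source and a single sink. The contact structure is then determined by a family of foliations ranging over a connected space. Your closing remark about the elimination lemma names this ingredient but does not carry it out.

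Two further points need repair even granting that step.
\begin{enumerate}
\item The group $\pi_0\mathrm{Diff}(S^2\times I\ \mathrm{rel}\ \partial)\cong\mathbb{Z}/2$ is generated by the sphere twist. So identifying the collar with the invariant model layer by layer does not by itself give uniqueness up to isotopy rel boundary; you have to absorb the twist. One way: take a cut-off contact isotopy that rotates the Darboux ball once about its contact normal. Its time-one map restricts to a contactomorphism of the collar rel boundary in the nontrivial class.
\item The concatenation produces a contactomorphism $(B^3,\xi_0)\to(B^3,\xi_1)$ fixing a neighbourhood of $\partial B^3$, not a path of contact structures. So Gray's theorem is not what turns it into an isotopy. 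You need this diffeomorphism to be isotopic to the identity rel $\partial B^3$, which is Cerf's theorem $\pi_0\mathrm{Diff}(B^3\ \mathrm{rel}\ \partial)=0$. Pushing $\xi_0$ along that isotopy then gives the required isotopy of contact structures.
\end{enumerate}
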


We can consider the above $(B^3, \xi)$ as a building block of contact 3-manifolds. Many classification results are based on the idea of cutting a 3-manifold into 3-balls and examine the dividing set on the boundaries of each piece -- an idea we will also use in the proof of our main theorem.

There is a fundamental operation on convex surfaces providing an effective technique to investigate contact structures. 

Let $\Sigma$ be a convex surface with dividing set $\Gamma_\Sigma$.
A \emph{bypass} for $\Sigma$ is a convex half-disk attached to
$\Sigma$ along a Legendrian arc $\alpha$ which intersects
$\Gamma_\Sigma$ in three points, as shown in Figure
\ref{fig:bypass}. We refer to $\alpha$ as the \emph{attaching arc}.
For the standard definition and conventions, see
\cite{HondaOnTheI}.

\begin{figure}[h!]
	\centering
	\includegraphics[width=0.2\textwidth]{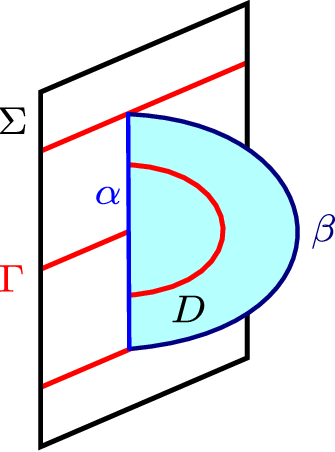}
	\caption{Bypass disk for $\Sigma$}
	\label{fig:bypass}
\end{figure}

Bypasses provide useful tools to find destabilizations of Legendrian knots. A bypass disk determines a destabilization of its Legendrian attaching arc. More precisely, the following is true, see \cite[Proposition 1.]{Laura10}.

\begin{proposition}\cite{Laura10}\label{prop:destab}
	Let $\Sigma\subset(Y, \xi)$ be a convex surface. Suppose that there is a bypass $D$ for $\Sigma$ along a Legendrian attaching arc $\alpha$. Let $\partial D=\alpha\cup \beta$. Then the Legendrian arc $\beta$ is a destabilization of $\alpha$.
\end{proposition}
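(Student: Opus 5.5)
The plan is to prove the statement by comparing a neighbourhood of the bypass disk $D$ with an explicit local model in $(\mathbb{R}^3,\xi_{\mathrm{std}})$ in which a stabilized arc and its destabilization cobound a convex half-disk with the same dividing data.

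\textbf{Model.} Take a short Legendrian arc $\beta'$ with an obviously simple front, for instance a straight segment. Let $\alpha'$ be the arc with the same endpoints obtained by inserting a single zig-zag, so $\alpha'$ is a stabilization $S_\pm$ of $\beta'$ relative to the endpoints. The closed piecewise smooth Legendrian curve $\alpha'\cup\beta'$ bounds an embedded disk $D'$ with two corners. By Proposition \ref{prop:classical_from_front}, the single zig-zag gives $\tw(\partial D',D')=-1$, measured relative to the corners in the usual way. Since this twisting is non-positive, $D'$ can be perturbed rel boundary to be convex, following Kanda and Honda. The dividing set then meets $\partial D'$ in $2$ points, so by Giroux's criterion (Theorem \ref{thm:GirouxCriterion}) it is a single arc. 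I will check that both endpoints of this arc lie on $\alpha'$ and none on $\beta'$. This follows from a half-twist count on each side: $\beta'$ has twisting $0$ along the model, while the zig-zag accounts for all of the negative twisting on $\alpha'$.

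\textbf{Matching with the bypass.} The bypass $D$ is by definition a convex half-disk with Legendrian boundary $\alpha\cup\beta$. Its dividing set is a single arc with both endpoints on $\alpha$, namely at the two outer points of $\alpha\cap\Gamma_\Sigma$. Thus $D$ and $D'$ are convex disks with Legendrian boundary whose dividing sets are combinatorially identical, with the corners and the sides $\alpha\leftrightarrow\alpha'$, $\beta\leftrightarrow\beta'$ corresponding. By Giroux flexibility, the characteristic foliations can be made to agree after a $C^0$-small isotopy rel the Legendrian boundary. Since a neighbourhood of a convex surface is determined by its characteristic foliation, there is then a contactomorphism from a neighbourhood of $D$ to a neighbourhood of $D'$ taking $\alpha$ to $\alpha'$ and $\beta$ to $\beta'$. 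Alternatively, round the edges to get a ball whose boundary has connected dividing set and apply Theorem \ref{thm:B3unique}. Transporting the model statement back, $\alpha$ is obtained from $\beta$ by a single stabilization inside this neighbourhood, so $\beta$ is a destabilization of $\alpha$. The sign of the stabilization is read off from which side of $\Sigma$ the bypass lies on, together with the orientation of $\alpha$.

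\textbf{Main obstacle.} I expect the hardest step to be handling the corners. Twisting numbers and convexity must be set up for a disk with two corners where $\alpha$ and $\beta$ meet, and the local model must have exactly the same corner behaviour, so that the neighbourhood identification is valid rel the endpoints. My first attempt would be to extend both disks slightly past the corners so that their boundaries become smooth Legendrian unknots with $\tb=-1$ and twisting $-1$. One can then compare with a standard $\tb=-1$ unknot spanning disk, where the zig-zag decomposition into $\alpha'$ and $\beta'$ is explicit.
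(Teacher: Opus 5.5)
The paper gives no argument for this proposition; it quotes it from \cite{Laura10}. Your outline is reasonable: by Giroux flexibility and uniqueness of the contact germ, all bypasses look the same near $D$, so one model suffices. But the whole content then lies in the model, and the model you describe does not have the properties you claim for it.

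Work in $\xi_{\mathrm{std}}=\ker(dz-y\,dx)$ with $\beta'=\{y=z=0,\ 0\le x\le 1\}$ and corners $p_1=(0,0,0)$, $p_3=(1,0,0)$. To match a bypass, the corners must be elliptic. So $T_{p_i}D'=\xi_{p_i}=\{z=0\}$, and near $p_i$ the disk is the convex sector spanned by the tangent rays of $\alpha'$ and $\beta'$. Take the natural reading of ``insert a single zig-zag'': the front of $\alpha'$ lies above the segment near both endpoints. Then $z\approx\kappa_1x^2/2$ near $p_1$ and $z\approx\kappa_3(x-1)^2/2$ near $p_3$, with $\kappa_1,\kappa_3>0$. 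Hence the sector at $p_1$ lies on the side $y>0$ of $\beta'$, and the sector at $p_3$ on the side $y<0$.

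It follows that $p_1$ and $p_3$ are elliptic points of opposite signs. So $\Gamma_{D'}$ must separate them, and therefore meets $\beta'$. In a bypass, $\beta\cap\Gamma_D=\emptyset$, so $p_1,p_3$ lie in one component of $D\setminus\Gamma_D$ and have the same sign. This $D'$ is therefore not a model for $D$, and no flexibility argument can match the two.

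Your twisting count misses this because it is not well founded:
\begin{itemize}
\item Proposition~\ref{prop:classical_from_front} is a statement about closed fronts.
\item The twisting of a sub-arc relative to $D'$ depends on how $D'$ meets that arc; ``$\beta'$ has twisting $0$'' is exactly what fails above.
\item The corners are not neutral. Turning from $\alpha'$ back onto $\beta'$ reverses the $x$-direction, so smoothing the two corners of $\alpha'\cup\beta'$ adds two cusps. In the picture above the smoothed unknot has $\tb=-2$, not $-1$.
\end{itemize}

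A workable model needs the front of $\alpha'$ above $\beta'$ near one endpoint and below it near the other, so it must cross the segment. For instance, let the middle stroke of the zig-zag pass through $\beta'$. Then both sectors lie on the same side, and the smoothed closed front has four cusps and one positive crossing, giving $\tb=-1$ as a bypass requires. Even so, you must actually construct $D'$ and check that its dividing arc has both endpoints on $\alpha'$, rather than infer this from a half-twist heuristic. One way is to take its collar tangent to $\xi$ along $\beta'$, where $\xi$ is constant.

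Two further points are missing in the transport back:
\begin{itemize}
\item The contactomorphism is only defined on a small neighbourhood of $D'$. You must show that the Legendrian isotopy from $\alpha'$ to $S_\pm(\beta')$ can be carried out inside an arbitrarily small neighbourhood of $D'$.
\item You must say what ``destabilization'' means for arcs meeting at corners. The relevant operation replaces $\alpha$ by $\beta$ inside a Legendrian curve containing $\alpha$ and smooths the two shallow corners. That is not the sharp-corner smoothing of $\partial D$ that enters your twisting count.
\end{itemize}
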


Given a convex surface with a bypass disk, we can always push the surface across the disk so that the resulting surface is again convex and its dividing set changes in a nice way, according to the following theorem of Honda \cite{HondaOnTheI}:

\begin{theorem}[Bypass Attachment, Honda, \cite{HondaOnTheI}]\label{thm:bypass_attachment}
	Assume $\Sigma\subset (Y, \xi)$ is a convex surface with a bypass disk $D$ along $\alpha\subset \Sigma$. Then there exists a neighborhood $\Sigma\times [0,1]$ of $\Sigma\cup D$ such that $\Sigma=\Sigma\times \{0\}$, the surface $\Sigma\times \{1\}$ is convex and the dividing sets $\Gamma_{\Sigma}$ and $\Gamma_{\Sigma\times \{1\}}$ are related as in Figure \ref{fig:bypass_Gamma}. 
\end{theorem}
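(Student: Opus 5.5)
The approach is to build $\Sigma\times\{1\}$ explicitly as the boundary of a neighborhood of $\Sigma\cup D$. Its dividing set is then read off from the pieces using Theorem \ref{thm:edge_rounding}. All changes are confined to a small neighborhood of $D$, so away from $\alpha$ the surface $\Sigma\times\{1\}$ is a pushoff of $\Sigma$ by the contact vector field and carries the same dividing curves. Only the local picture near $\alpha$ has to be computed.

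\emph{Normalization.} By the definition of a bypass, $D$ is a convex half-disk with Legendrian boundary $\partial D=\alpha\cup\beta$. We have $\tw(\partial D,D)=-1$, and $\Gamma_D$ is a single arc with both endpoints on $\alpha$. These endpoints are the two outer points of $\alpha\cap\Gamma_\Sigma$, while the middle point is an elliptic singularity not hit by $\Gamma_D$. We may arrange two local models. Using Giroux's Flexibility Theorem and a $C^0$-small perturbation of $\Sigma$ near $\alpha$, a neighborhood of $\alpha$ in $\Sigma$ is standard: $\Sigma$ is vertically invariant there, and $\alpha$ crosses three parallel dividing arcs $\gamma_1,\gamma_2,\gamma_3$. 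Using the standard neighborhood of a Legendrian arc, $D$ meets $\Sigma$ at a right angle along $\alpha$, as in the model of Theorem \ref{thm:edge_rounding}.

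\emph{Thickening.} Take a thin $I$-invariant neighborhood $D\times[-\varepsilon,\varepsilon]$ of $D$, transverse to the contact vector field of $D$. Its two faces $D\times\{\pm\varepsilon\}$ are convex with one dividing arc each, copying $\Gamma_D$. Its thin side face along $\beta$ carries the dividing curves dictated by $\tw(\beta,D)$. Let $N(\alpha)$ be a thin strip around $\alpha$ in $\Sigma$. Define
$$\Sigma' = \bigl(\Sigma\setminus N(\alpha)\bigr)\cup \partial\bigl(D\times[-\varepsilon,\varepsilon]\bigr)_{\mathrm{out}},$$
and smooth all corners by edge rounding. The surface $\Sigma'$ cobounds with $\Sigma$ a product region containing $\Sigma\cup D$. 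After shrinking the vertically invariant collar of $\Sigma$, this gives the required neighborhood $\Sigma\times[0,1]$ with $\Sigma\times\{1\}=\Sigma'$. Convexity of $\Sigma'$ holds because each piece is convex with Legendrian edges, and Theorem \ref{thm:edge_rounding} preserves convexity.

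\emph{Computing the dividing set.} Along each of the two Legendrian edges parallel to $\alpha$, cutting $\gamma_1,\gamma_2,\gamma_3$ gives three endpoints on the $\Sigma$-side. The face $D\times\{\pm\varepsilon\}$ contributes two endpoints on that edge. Edge rounding connects these endpoints with the fixed quarter-shift of Figure \ref{fig:edge_rounding}. Concretely, on each side of the bypass, $\gamma_1$ continues into the dividing arc of $D\times\{\pm\varepsilon\}$, crosses over the top, and comes down on the other side to join $\gamma_2$. The middle arc and $\gamma_3$ are rerouted in the same way, and the side face along $\beta$ closes everything up. Following the arcs through gives exactly the bypass-attachment picture of Figure \ref{fig:bypass_Gamma}: the three arcs meeting $\alpha$ are reconnected by the ``twisted'' rule and are unchanged elsewhere.

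\emph{Main obstacle.} I expect the main difficulty to be this last bookkeeping step. The direction of each rounding shift depends on the coorientation conventions and on which side of $\Sigma$ the bypass lies. A sign error produces the mirror configuration, which is a bypass attached from the other side. I would first check the computation in the explicit model $\mathbb{R}^3$ with $\Sigma=\{z=0\}$, and only then transport it by the normalization above.
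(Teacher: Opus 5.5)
Your strategy is the standard one from \cite{HondaOnTheI}; the paper only cites the result, so this is the proof to compare against. You thicken $D$, replace a strip around $\alpha$ in a collar copy of $\Sigma$ by the outer boundary of $D\times[-\varepsilon,\varepsilon]$, and round edges. But the step with all the content, the bookkeeping, is not carried out, and what you assert about it is wrong.

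\emph{Where $\Gamma_D$ meets $\alpha$.} The endpoints of $\Gamma_D$ are not $p_1,p_3$. Along a shared Legendrian edge, the dividing points of each surface are exactly the singular points of the other; in the local model for edge rounding they are offset by a quarter period. So $p_1,p_2,p_3$ are all singular points of the characteristic foliation of $D$, with $p_1,p_3$ of one sign and $p_2$ of the other. Consequently $\Gamma_D$ meets $\alpha$ once in each of the open subarcs $(p_1,p_2)$ and $(p_2,p_3)$.

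\emph{The reconnection differs on the two sides.} The edges $\alpha\times\{\pm\varepsilon\}$ sit in mirror-image corners, so the quarter shift runs in opposite directions along $\alpha$ on the two sides. Concretely, take the contact form $\sin(\pi x)\,dz-\cos(\pi x)\,dy$ with $\Sigma=\{z=0\}$ and $D\subset\{y=0,\ z\geq0\}$. Then $\Gamma_\Sigma=\{x=-1,0,1\}$ and $\Gamma_D\cap\alpha=\{x=\pm\frac12\}$. Near the edge $y=\pm\varepsilon$ use the transverse contact vector field $\pm\partial_y+\partial_z$, with matching signs. This shows that the line $x=k$ continues to $x=k+\frac12$ on $D\times\{\varepsilon\}$, but to $x=k-\frac12$ on $D\times\{-\varepsilon\}$. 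Hence:
\begin{itemize}
\item the ends of $\gamma_1,\gamma_2$ in $y>0$ are joined through one face;
\item the ends of $\gamma_2,\gamma_3$ in $y<0$ are joined through the other face;
\item the two leftover ends (of $\gamma_3$ in $y>0$ and of $\gamma_1$ in $y<0$) are joined by an arc running over the $\beta$-side.
\end{itemize}
Up to the front/back mirror, this is Figure \ref{fig:bypass_Gamma}. Your version, with $\gamma_1$ joined to $\gamma_2$ on both sides of $\alpha$, is already ruled out by symmetry. Rotation by $\pi$ about the normal line through $p_2$ preserves the local model and the correct answer, but not yours.

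\emph{The corners.} You claim every piece meets its neighbours along Legendrian edges. Near $p_1$ and $p_3$, however, the thickening meets $\Sigma$ along the short arcs $\{p_j\}\times[-\varepsilon,\varepsilon]$. Since $\xi_{p_j}=T_{p_j}D$ and the thickening direction is transverse to $D$, these arcs are transverse to $\xi$, so Theorem \ref{thm:edge_rounding} does not apply there. This is exactly where the leftover ends of $\gamma_3$ and $\gamma_1$ turn onto the $\beta$-side. You therefore need a separate local model at the corners, or a modified neighbourhood there, to see that these ends are joined by a single arc and that no further dividing components appear.
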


\begin{figure}[h!]
	\centering
	\includegraphics[width=0.45\textwidth]{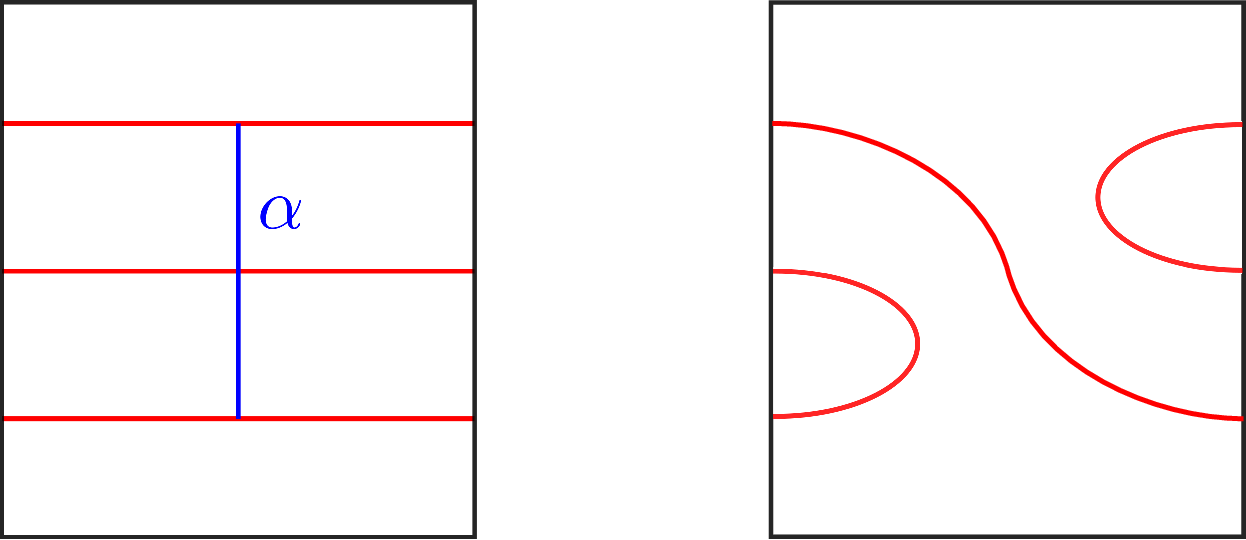}
	\caption{Local change of the dividing set after a bypass attachment. A portion of $\Sigma$ with attaching arc $\alpha$ is on the left, while the corresponding portion of $\Sigma\times \{1\}$ is on the right.}
	\label{fig:bypass_Gamma}
\end{figure}

\begin{definition}
	We say that in Theorem \ref{thm:bypass_attachment}, the convex surface $\Sigma\times \{1\}$ is obtained from $\Sigma$ by a \emph{bypass attachment from the front.} We can consider a \emph{bypass attachment from the back} if we change the orientation of the interval $[0,1]$ in the theorem above and mirror Figure \ref{fig:bypass_Gamma}, see Figure \ref{fig:bypass_back}
\end{definition}

\begin{figure}[h!]
	\centering
	\includegraphics[width=0.45\textwidth]{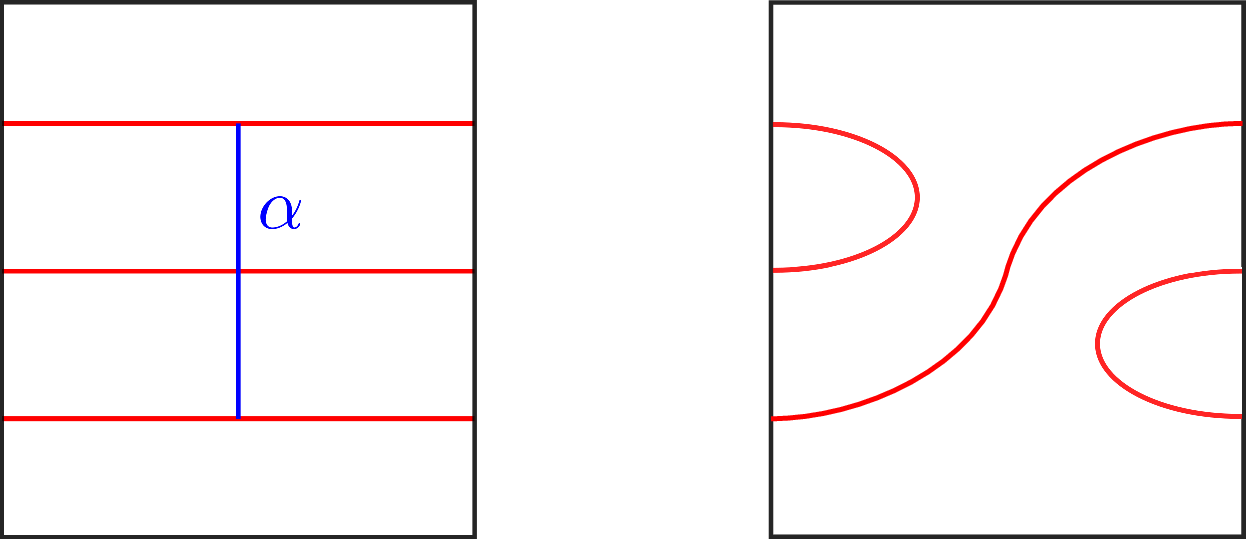}
	\caption{Local change of the dividing set after a bypass attachment \mbox{from the back}}
	\label{fig:bypass_back}
\end{figure}

Not every formal bypass attachment can occur in a tight contact
manifold: a bypass attachment which
produces a homotopically trivial component of the dividing set is
impossible by Giroux's Criterion. We will refer to such a bypass as
\emph{disallowed}.


\begin{figure}[h!]
	\centering
	\includegraphics[width=0.4\textwidth]{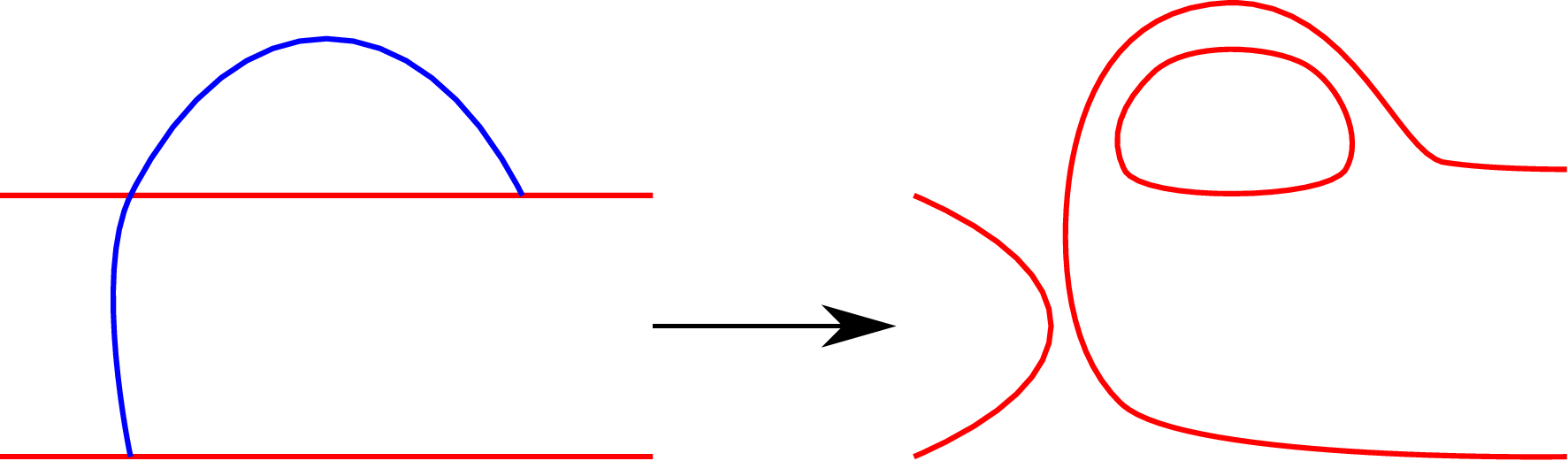}
	\caption{A disallowed bypass attachment results in an overtwisted \linebreak contact structure.}
	\label{fig:disallowedBP}
\end{figure}

There are some general ways which help us find bypasses in many situations.
A component of the dividing set on a convex surface with Legendrian
boundary is called \emph{boundary-parallel} if it cuts off a half-disk
containing no other components of the dividing set. We will use the
standard fact that, except for a disk with
$\tb(\partial\Sigma)=-1$, a boundary-parallel dividing arc gives a
bypass \cite[Proposition 3.17 and 3.18]{HondaOnTheI}. Consequently, such a bypass may be used
to destabilize the corresponding Legendrian boundary component.

Our last tool mentioned here for finding bypasses is a proposition by Honda, Kazez and Mati\'c \cite{HKMrotation} which enables us to swap a bypass to another one which may be more convenient to examine. To state this, we need to introduce a notion:

Let $\Sigma$ be a convex boundary component of a tight 3-manifold $(Y, \xi)$ and let $\Gamma$ denote the dividing set on $\Sigma$. Consider $\Sigma$ with the opposite orientation as the one induced from the orientation on $Y$. Let $\alpha$ and $\alpha'$ be two disjoint Legendrian arcs in $\Sigma$ that start and end on $\Gamma$, start on an arc $\gamma_1\subset \Gamma$, and both intersect $\Gamma$ in exactly three points. Suppose that there is an embedded rectangle $R$ in $\Sigma$ such that its two sides are subarcs of $\alpha$ and $\alpha'$ and its other two sides are subarcs of $\gamma_1, \gamma_2\subset\Gamma$, see Figure \ref{fig:bypass_rotation} for an illustration. Assume furthermore, that $\alpha$ and $\alpha'$ extend beyond $\gamma_2$, not reintersecting $\partial R$, and that with the orientation induced from $R$, the curve $\gamma_2$ starts on $\alpha'$ and ends on $\alpha$. Then we say that $\alpha$ is \emph{to the left} of $\alpha'$.

\begin{figure}[h!]
	\centering
	\includegraphics[width=0.33\textwidth]{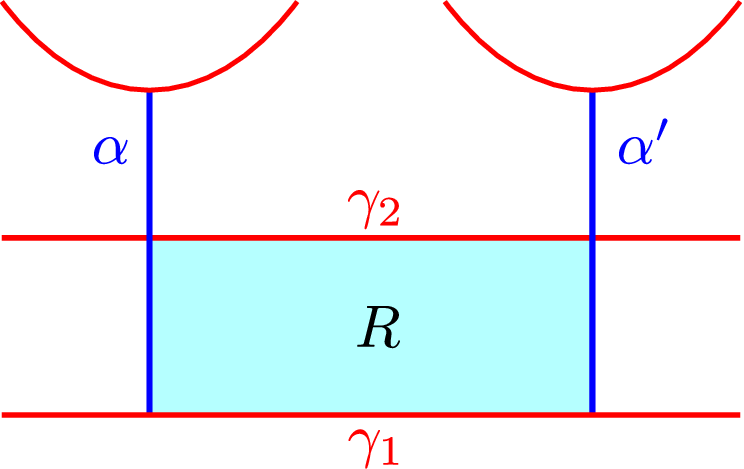}
	\caption{$\alpha$ is to the left of $\alpha'$. The figure is local, any of the four dividing arcs may be part of the same dividing curve in $\Gamma$. If there is a bypass for $\alpha'$ then there is a bypass also for $\alpha$.}
	\label{fig:bypass_rotation}
\end{figure}\vspace{-0.2cm}

\begin{proposition}[Bypass Rotation, Honda-Kazez-Mati\'c, \cite{HKMrotation}]\label{prop:bypass_rotation}
	Let $(Y, \xi)$ be a tight contact 3-manifold and $\Sigma\subset (Y,\xi)$ a convex surface. Let $\alpha$ and $\alpha'$ be two disjoint Legendrian arcs in $\Sigma$ such that $\alpha$ is to the left of $\alpha'$. If there is a bypass for $\Sigma$ attached along $\alpha'$ from the front side of the diagram, then there is a bypass for $\Sigma$ attached along $\alpha$ from the front.
\end{proposition}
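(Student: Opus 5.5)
The plan is to derive the bypass along $\alpha$ from the bypass along $\alpha'$ by means of an imbalance argument inside the bypass layer. Let $D'$ be the given bypass along $\alpha'$ from the front. By Theorem \ref{thm:bypass_attachment} there is a neighborhood $N=\Sigma\times[0,1]$ of $\Sigma\cup D'$ such that $\Sigma\times\{1\}$ is convex. Its dividing set $\Gamma_1$ is obtained from $\Gamma$ by the local move of Figure \ref{fig:bypass_Gamma} in a neighborhood of $\alpha'$. We may take this neighborhood thin enough that it meets $\alpha$ only near the two sides $\gamma_1,\gamma_2$ of the rectangle $R$, and away from $\alpha$ elsewhere. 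The aim is to find a bypass for $\Sigma\times\{0\}=\Sigma$ along $\alpha$ inside $N$, attached from the front side, that is, lying in $N$.

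\textbf{Step 1: compute how $\alpha$ meets the new dividing set.} Regard $\alpha\times\{1\}$ as an arc on $\Sigma\times\{1\}$. I would compute $|\alpha\times\{1\}\cap\Gamma_1|$ from the local picture of Figure \ref{fig:bypass_Gamma} together with $R$. The bypass move along $\alpha'$ reconnects the dividing arcs $\gamma_1,\gamma_2$ and the third arc crossed by $\alpha'$. The hypothesis that $\alpha$ is \emph{to the left} of $\alpha'$ says that $\gamma_2$, oriented from $R$, runs from $\alpha'$ to $\alpha$. With this orientation, the reconnected arcs can be isotoped off $\alpha$ across $R$. The expected outcome is that $\alpha\times\{1\}$ can be made to meet $\Gamma_1$ in a single point, compared with three for $\alpha\times\{0\}$. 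This is the step I expect to be the main obstacle. One has to check carefully against the orientation conventions (front versus back, and the orientation of $\Sigma$ opposite to the induced one) that the left configuration really lowers the intersection number. The right configuration would instead produce an arc that still crosses $\Gamma_1$ three times. I would verify this by drawing the two cases explicitly.

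\textbf{Step 2: apply an imbalance argument.} Extend $\alpha$ slightly to an arc $\hat\alpha$ with endpoints on $\Gamma$, and consider the vertical rectangle $Q=\hat\alpha\times[0,1]\subset N$. After Legendrian realizing its vertical edges inside the $[0,1]$-invariant part near the endpoints, the boundary of $Q$ is Legendrian. The twisting along the bottom edge is determined by three intersections with $\Gamma$, and along the top edge by one intersection with $\Gamma_1$. The vertical edges contribute no twisting. Perturb $Q$ to be convex with Legendrian boundary, which is possible since all twisting numbers are nonpositive. The count of endpoints of dividing arcs on $\partial Q$ then shows that more dividing arcs hit the bottom edge than the top edge. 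By Giroux's Criterion (Theorem \ref{thm:GirouxCriterion}) there are no closed components of $\Gamma_Q$, since $N$ is tight. Hence some dividing arc of $Q$ must have both endpoints on the bottom edge $\alpha\times\{0\}$ and be boundary-parallel.

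\textbf{Step 3: extract the bypass.} A boundary-parallel dividing arc along $\alpha\times\{0\}$ yields a bypass half-disk in $Q$, by the standard fact quoted above \cite{HondaOnTheI}. This half-disk is attached to $\Sigma$ along a subarc of $\hat\alpha$ crossing $\Gamma$ three times. After shrinking $\hat\alpha$ back, that subarc can be taken to be $\alpha$ itself, using that the extension is chosen in a neighborhood where no further crossings occur. Since $Q\subset N$, the bypass lies on the front side of $\Sigma$, which proves the proposition.
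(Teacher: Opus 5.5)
The paper gives no proof of its own here; it only cites \cite[Lemma 4.2]{HKMrotation}. Your Step 1 is the right first move. After the bypass along $\alpha'$, the parts of $\gamma_1,\gamma_2$ on the $\alpha$-side are joined across the strip around $\alpha'$. So the first two crossings of $\alpha$ lie on one new dividing arc, which together with a subarc of $\alpha$ bounds a disk through $R$. Steps 2--3, however, have a genuine gap: an endpoint count on a vertical rectangle cannot show that the bypass it produces is along $\alpha$.

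Write $p_1,p_2,p_3$ for the points of $\alpha\cap\Gamma$. Since $\alpha\cap\alpha'=\emptyset$, the layer can be taken $I$-invariant over a neighbourhood of $\alpha$. Hence $\alpha\times\{1\}$ still meets $\Gamma_1$ in three points, so $Q$ cannot be the product $\hat\alpha\times[0,1]$; its top edge must be the arc pushed across $R$.

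More seriously, in the invariant part an arc $\{p\}\times[0,1]$ is Legendrian only when $p\in\Gamma$, so your bottom edge must end on $\Gamma$. There are two ways to arrange this, and neither works:
\begin{itemize}
\item If the bottom edge ends at $p_1,p_3$, the simplified top arc has to leave $p_1\times\{1\}$ on the other side of the new dividing arc. Then $Q$ acquires a half twist along that vertical side, and the count no longer forces a boundary-parallel arc on the bottom.
\item If you extend $\alpha$ to the neighbouring crossings $p_0,p_4$, you get four endpoints below against two above. The forced boundary-parallel arc may then straddle $p_1$ or $p_3$, giving a bypass along $[p_0,p_2]$ or $[p_2,p_4]$ rather than along $\alpha$.
\end{itemize}
Your assertion that ``that subarc can be taken to be $\alpha$ itself'' is exactly the unjustified point.

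The missing idea is to use the new surface directly. On $\Sigma\times\{1\}$ the unchanged arc $\alpha\times\{1\}$ is the attaching arc of a \emph{trivial} bypass from the front, because its first two crossings lie on the new dividing arc that cuts off the disk through $R$. This is where the left/front hypothesis enters: the same rotation that joined $\gamma_1$ and $\gamma_2$ on the $\alpha$-side makes a further front attachment along $\alpha\times\{1\}$ trivial. The mirror configuration would instead create a closed contractible dividing curve, so the attachment would be disallowed.

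Trivial bypasses exist inside any $I$-invariant collar. So there is a front bypass $E$ for $\Sigma\times\{1\}$ along $\alpha\times\{1\}$, lying just outside the layer. Since $\alpha\times[0,1]$ lies in the invariant part of the layer, the union $(\alpha\times[0,1])\cup E$ is a half-disk meeting $\Sigma$ in $\alpha$. Its dividing set is a single arc straddling $p_2$, so it is a bypass for $\Sigma$ along $\alpha$ from the front. This replaces your Steps 2--3, and your Step 1 verification becomes the check that the front attachment along $\alpha\times\{1\}$ is trivial rather than disallowed.
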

For the proof see  \cite[Lemma 4.2]{HKMrotation}.

Through standard neighborhoods we obtain a useful correspondence between bypasses and destabilizations, see also \cite[Lemma 3.15 and 4.4]{HondaOnTheI}, \cite[Section 2.4.]{chakraborty2020cabling} and \cite{EtnyreHonda01}.

\begin{lemma}[Twist Number Lemma, Honda, \cite{HondaOnTheI}]
	\label{lem:twist_number}
	Let $(Y,\xi)$ be a tight contact $3$-manifold and let
	$L\subset(Y,\xi)$ be a Legendrian knot with $\tb(L)=n$.
	Denote the standard contact neighborhood of $L$ by $N(L)$.
	If there exists a bypass attached from outside $N(L)$ along
	a Legendrian ruling curve of slope $r$ on $\partial N(L)$
	and $\frac{1}{r}>n+1$, then there is a Legendrian knot $L'$
	smoothly isotopic to $L$ with larger $\tb$.
	The convex torus obtained by attaching the bypass bounds
	an enlarged standard contact neighborhood of $L'$, and
	$L'$ is a destabilization of $L$.
\end{lemma}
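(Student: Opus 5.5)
The plan is to reduce the statement to Honda's analysis of bypass attachments on convex tori, followed by the classification of tight solid tori. Put $\partial N(L)$ in standard form. In the coordinates fixed above it has two dividing curves of slope $1/n$, and by Lemma \ref{lemma:ruling_barmi} we may assume its Legendrian rulings have slope $r$. Let $D$ be the given bypass, attached from outside along a ruling arc $\alpha$ meeting $\Gamma$ in three points. By Theorem \ref{thm:bypass_attachment}, pushing $\partial N(L)$ across $D$ gives a convex torus $T'$ bounding a solid torus $N'\supset N(L)$. I would compute $\Gamma_{T'}$ by lifting Figure \ref{fig:bypass_Gamma} to the universal cover $\mathbb{R}^2$.

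\emph{Step 1: identify the new dividing slope.} Since $\alpha$ meets only two parallel dividing curves, the local move in Figure \ref{fig:bypass_Gamma} leaves two dividing curves on $T'$. Their slope $s'$ is obtained from $s=1/n$ as in Honda's bypass attachment lemma for tori. On the Farey graph, $s'$ is the point adjacent to $s$ that lies between $s$ and $r$ (in the appropriate direction) and is closest to $r$.

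In our coordinates the Farey neighbours of $1/n$ on the relevant side are $1/(n+1), 1/(n+2),\dots$, accumulating toward $0$. The hypothesis $1/r>n+1$ says exactly that $r$ lies strictly beyond $1/(n+1)$ from $1/n$, that is, in the arc where the first Farey point reached is $1/(n+1)$. Hence $s'=1/(n+1)$. I would verify this by taking the basis change sending $1/n\mapsto\infty$, $1/(n+1)\mapsto 0$ and checking the standard model. I expect this orientation and sign bookkeeping to be the main obstacle: one must confirm that "outside $N(L)$" corresponds to the correct (front) attachment picture, and that the arc condition $1/r>n+1$ is exactly what rules out both the trivial case and the disallowed case (a contractible component, excluded by Theorem \ref{thm:GirouxCriterion}). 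If $r=\infty$, this is the familiar meridional bypass case, which I would check first as a sanity test.

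\emph{Step 2: recognize $N'$.} $N'$ is a tight solid torus (it sits in $Y$) with convex boundary carrying two dividing curves of slope $1/(n+1)$, which is a longitudinal slope meeting the meridian once. By Kanda's classification, tight structures on such a solid torus are unique up to isotopy rel boundary. They are therefore isomorphic to a standard neighbourhood (Theorem \ref{thm:standard_neighborhood}) of its Legendrian core $L'$ with twisting $n+1$ relative to the framing $\lambda$. Since $N(L)\subset N'$ are both solid tori with cores isotopic to the core of $N'$, the knot $L'$ is smoothly isotopic to $L$ and $\tb(L')=n+1>n$.

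\emph{Step 3: destabilization.} The layer $N'\setminus N(L)$ is a tight $T^2\times I$ with boundary slopes $1/n$ and $1/(n+1)$, which are Farey neighbours. It is therefore a basic slice, and there are exactly two such, distinguished by a sign. Inside $N'$, the two stabilizations $S_\pm(L')$ have standard neighbourhoods whose complements in $N'$ realize these two basic slices. By uniqueness of the basic slice with fixed sign rel boundary, $N(L)$ is isotopic inside $N'$ to $N(S_\pm(L'))$ for the matching sign. Hence $L$ is Legendrian isotopic to a stabilization of $L'$, and $T'=\partial N'$ bounds the enlarged standard neighbourhood, as claimed.
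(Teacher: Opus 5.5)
The paper does not prove this lemma; it only cites \cite{HondaOnTheI} (Lemmas 3.15 and 4.4) and the other references listed just before the statement. Your outline is the standard argument behind those citations: Honda's slope rule for a bypass on a torus with two dividing curves, then Kanda's uniqueness on a solid torus with longitudinal dividing slope, then uniqueness of basic slices of a fixed sign. Steps 2 and 3 are sound. In Step 3 you should add why $N'\setminus N(L)$ is minimally twisting: otherwise it would contain a convex torus of meridional slope, and $N'$ would be overtwisted.

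The gap is in Step 1, which does not work as written. Your list of neighbours is wrong: $1/(n+2)$ is not adjacent to $1/n$, since $(n,1)$ and $(n+2,1)$ have determinant $-2$. The neighbours of $1/n$ are $0$ and $b/(bn\pm1)$ for $b\geq1$. On the relevant side they are $1/(n+1),2/(2n+1),3/(3n+1),\dots$, which accumulate at $1/n$, not at $0$. The sequence $1/(n+j)$ is instead a chain of neighbours of $0$. Had your list been right, the ``closest to $r$'' rule would return some $1/(n+j)$ with $j\geq2$ whenever $1/r>n+2$. So your conclusion does not follow from your own description.

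The correct computation is short. The basis change $p\mu+q\lambda\mapsto(p-nq)\mu+q\lambda$ sends $1/n\mapsto\infty$, $0\mapsto0$ and $1/(n+1)\mapsto1$. It sends $r\mapsto r'=1/(1/r-n)$, so the hypothesis $1/r>n+1$ says exactly $0<r'<1$. (The value $r=0$ does not arise, since meridional ruling curves meet $\Gamma$ only twice.) Thus $r$ lies strictly between the consecutive neighbours $0$ and $1/(n+1)$ of $s$. Honda's rule therefore returns one of these two, whichever direction convention applies. Slope $0$ is impossible: $N'$ would then be a solid torus with meridional dividing curves, and a Legendrian divide would bound an overtwisted meridian disk. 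Hence $s'=1/(n+1)$. This also removes the front/back orientation bookkeeping you identified as the main obstacle, since both possible conventions are handled at once.

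Finally, your sanity check is misidentified. In these coordinates the meridian has slope $0$, and meridional rulings carry no bypass at all. The case $r=\infty$ is the Seifert-longitudinal ruling, for which the hypothesis reads $n\leq-2$.
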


Under the assumptions of Lemma \ref{lem:twist_number},
attaching the bypass corresponds to a destabilization of $L$
and increases its Thurston--Bennequin number.

\section{The upper bound}\label{sec:upper_bound}

We now prove the main results stated in the Introduction. Although
Theorem \ref{thm:main} and Proposition \ref{thm:max_tb} concern the family
$K(4,m)$, some of the intermediate results in Sections
\ref{subsec:on_the_sphere} and \ref{subsec:inside} hold for general
double twist knots $K(k,m)$.

We begin by determining the maximal Thurston--Bennequin invariant of
$K(4,m)$.

\begin{proof}[Proof of Proposition \ref{thm:max_tb}]
	The proof is an application of Proposition \ref{prop:ruling_maxtb} stating that the existence of a normal ruling implies that the $\tb$ is maximal. In each case we give a front diagram of a Legendrian knot realizing the knot type and show that it admits a normal ruling. In Figure \ref{fig:max_tbk_ruling}, we present such diagrams in a way that the number of crossings in the box is $|m+2|$ due to the two crossings on the left and right hand side. By Proposition \ref{prop:ruling_maxtb}, this is enough to prove that the given knots maximize $\tb$. The cases $m=-1$ and $m=0$ are treated separately below.
	
	\begin{figure}[h!]
		\centering
		\includegraphics[width=0.88\textwidth]{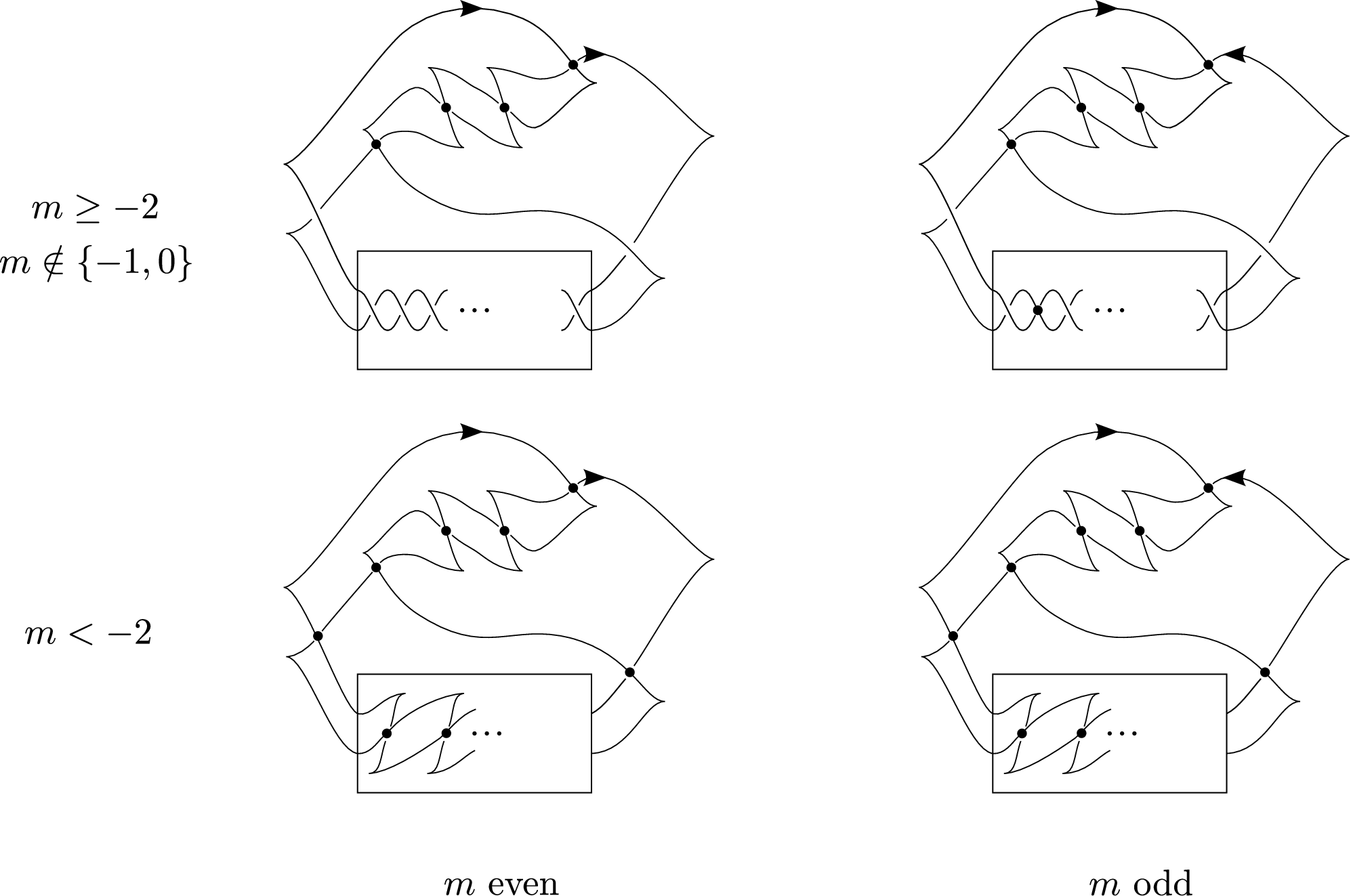}
		\caption{Maximal $\tb$ front diagrams of the knot type
				$K(4,m)$ for $m\neq-1,0$. The black dots indicate the
				switches of normal rulings.}
		\label{fig:max_tbk_ruling}
	\end{figure}
	
	Now our claim follows by easy computation. Recall from Proposition \ref{prop:classical_from_front} that $\tb=\wr-\frac{1}{2}\cdot c$. We need to observe how the orientation of the strands and the number of cusps depends on $m$, see Figure \ref{fig:max_tbk_ruling}.
		\begin{figure}[h!]
			\centering
			\includegraphics[width=0.4\textwidth]{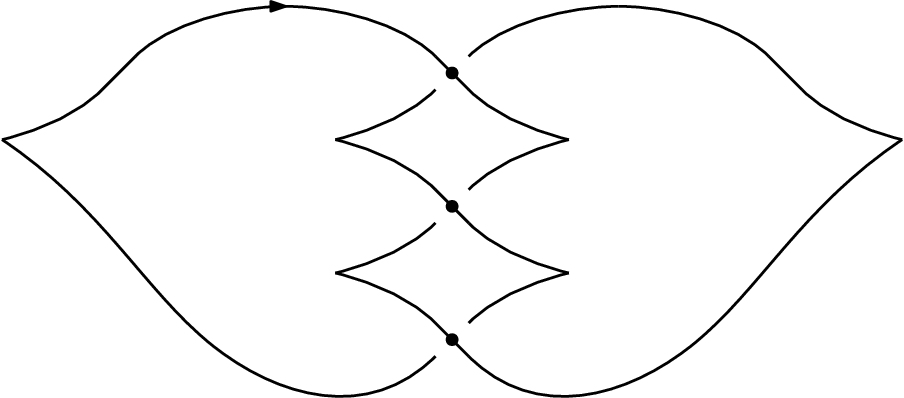}
			\caption{A maximal $\tb$ front diagram of the negative
				trefoil. The black dots indicate the switches of a
				normal ruling.}
			\label{fig:max_tb_trefoil}
		\end{figure}
	\begin{itemize}
		\item If $m\geq-2$ is even and $m\neq0$, the crossings in the box are negative, all the others are positive, and cusps only occur outside the box. Therefore $$\tb=6-(m+2)-\frac{1}{2}\cdot 10=-1-m.$$
	\item For $m=-1$, the knot is the negative trefoil.
	We use the diagram in Figure \ref{fig:max_tb_trefoil},
	which admits a normal ruling and therefore has maximal $\tb$.
	The three crossings are negative and there are six cusps.
	Therefore $$\tb=-3-\frac{1}{2}\cdot6=-6.$$
	\item For $m=0$, the knot is the unknot. We use its standard
	front diagram with two cusps and no crossings, which admits
	a normal ruling consisting of a single ruling disk.
	By Proposition \ref{prop:ruling_maxtb}, it has maximal $\tb$.
	Therefore $$\tb=0-\frac{1}{2}\cdot2=-1.$$
		\item If $m>-1$ and odd, the crossings in the box are negative, outside the box there are $2$ positive and $4$ negative crossings and cusps only occur outside the box.
		Therefore $$\tb=-2-(m+2)-\frac{1}{2}\cdot 10=-9-m.$$
		\item If $m<-2$ and even, all crossings are positive and near each crossing inside the box there are two cusps, hence the contribution of the braid in the box is $0$.
		Therefore $$\tb=6-\frac{1}{2}\cdot 10+0=1.$$
		\item If $m<-2$ and odd, there are $2$ positive and $4$ negative crossings outside the box, inside all crossings are positive and to each belong $2$ cusps, hence again, the contribution of the braid in the box is $0$.
		Therefore $$\tb=-2-\frac{1}{2}\cdot 10+0=-7.$$
	\end{itemize}
\end{proof}

\begin{remark}
	The same construction applies to every $k>2$ for which
	$K(k,m)$ is a knot. For $m\neq-1,0$, the fronts have
	$k+2$ crossings and $2k+2$ cusps outside the box.
	The normal rulings extend by repeating the local pattern
	in the upper chain.
	
	For even $k$, the formulas in Proposition \ref{thm:max_tb}
	remain valid with $-9-m$, $-7$ and $-6$ replaced by
	$-2k-1-m$, $1-2k$ and $-2(k-1)$, respectively.
	For $m=-1$, we use the standard front of the negative
	$(2,k-1)$-torus knot, with $k-1$ negative crossings,
	$2(k-1)$ cusps and a normal ruling.
	
	For odd $k$, necessarily $m$ is even. The $k$ crossings
	in the upper chain are positive and the other two crossings
	outside the box are negative, so the contribution outside
	the box is $-3$. The box contributes $m+2$ if $m\geq2$
	and $2(m+2)$ if $m\leq-2$. Together with the unknot case
	$m=0$, this gives
	$$
	\tb_{\max}(K(k,m))=
	\begin{cases}
		m-1, & \textrm{if } m\geq0,\\
		2m+1, & \textrm{if } m<0.
	\end{cases}
	$$
\end{remark}

We now turn to the proof of Theorem \ref{thm:main}. Some of the
intermediate results below are proved for general double twist knots
$K(k,m)$. As outlined in the Introduction, we cut $S^3$ and the knot
along a $2$-sphere separating the two twist regions. In Subsection
\ref{subsec:on_the_sphere}, we make this sphere convex and normalize
its dividing set. In Subsection \ref{subsec:inside}, we bound the possible inner Legendrian pieces. In Subsection \ref{subsec:outside}, we bound
the possible contact structures on the outer complement.
Combining these bounds gives Theorem \ref{thm:main}.

\subsection{The dividing curve on the sphere}\label{subsec:on_the_sphere}

Our aim in the following sections is to prove Theorem \ref{thm:main}. First of all, we introduce some notations and make some preliminary observations. 

In the next sections it will be particularly important to distinguish between Legendrian and smooth knots. Therefore, recall that we denote Legendrian knots by calligraphic capital letters, while smooth knots by regular capital letters. Consider a double twist knot $K(k,m)\subset S^3$. Observe, that $k$ or $m$ has to be even, otherwise (when $k$ and $m$ are both odd numbers) $K(k,m)$ is a 2-component link.

Recall that the cases $|m|<2$ are already known, as discussed in the introduction.
Moreover, if $|m|=2$, then $K(k,m)$ is a twist knot, which is completely classified by Etnyre, Ng and V\'ertesi \cite{ENV}. On the other hand, our argument also applies to the case $m=-2$, so we include this case.
Thus, in what follows, we assume that $k>2$ and either $|m|>2$ or
$m=-2$.

Let $\K$ denote the smooth knot type of $K(k,m)$. There is an embedded 2-sphere $S$ in $S^3$ which intersects $\K$ in four points and divides $S^3$ into two 3-balls $\Bout$ and $\Bin$ such that $\K \cap \Bout$ is a vertical 2-braid with $k$ half-twists, while $\K\cap \Bin$ is a horizontal 2-braid with $m$ half-twists according to Figure \ref{fig:K(k,m)_and_sphere}.

\begin{figure}[h!]
	\centering
	\includegraphics[width=0.45\textwidth]{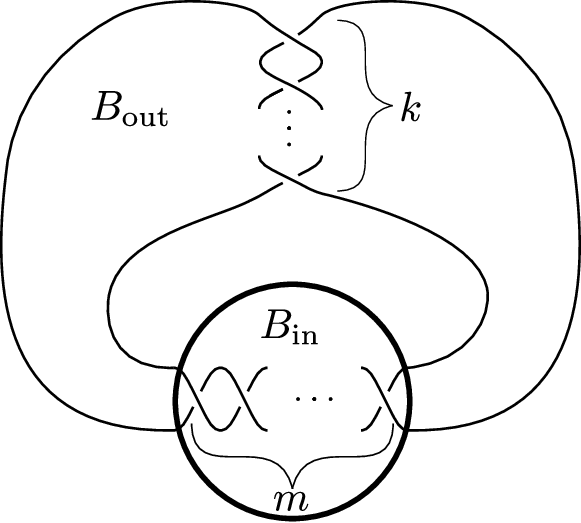}
	\caption{The sphere $S$ divides $S^3$ into two 3-balls $\Bin$ and $\Bout$. The intersection of $\K$ with $\Bout$ is a vertical 2-braid with $k$ half-twists, while its intersection with $\Bin$ is a horizontal 2-braid with $m$ half-twists.}
	\label{fig:K(k,m)_and_sphere}
\end{figure}

 Fix a standard model of $\K$ as shown in Figure \ref{fig:K(k,m)_and_sphere}, and take a Legendrian realization $\L$ of $K(k,m)$. As $\L$ is smoothly isotopic to $K(k,m)$, there is an isotopy $\psi: S^3\rightarrow S^3$ taking $K(k,m)$ to $\L$ so that the image $\psi(S)$ of the sphere $S$ is a convex surface. According to Theorems \ref{thm:standard_neighborhood} and \ref{thm:LegendrianRealizationPrinciple}, we can choose a sufficiently small standard neighborhood $N$ of $\L$ such that its boundary intersects $\psi(S)$ in four Legendrian unknots.
First, we normalize the pull-back $\psi^{-1}(\Gamma)$ of the dividing set on $S$. Then, in Sections \ref{subsec:inside} and \ref{subsec:outside}, we will examine $\K\cap \Bin$ and $\K\cap \Bout$. The results presented here are based on generalizations of the proofs in \cite{ENV}.

\begin{theorem}\label{thm:normalizingGamma}
Let $k>2$ and assume that either $|m|>2$ or $m=-2$, and fix the double twist knot $K(k,m)$ and the sphere $S$ as above. Consider a tubular neighborhood $\nu (K(k,m))$ of $K(k,m)$ and let $P=S\setminus \nu(K(k,m))$, the sphere with four disks removed. For any Legendrian realization $\L$ of $K(k,m)$, there is an isotopy $\psi: S^3\rightarrow S^3$ such that $\psi(S)$ and $\psi(P)$ are convex surfaces, $\psi(\partial P)$ are four Legendrian unknots with $\tb=-1$, $N=\psi(\nu (K(k,m)))$ is a standard contact neighborhood of $\L$ and the pullback $\Gamma_P \subset P$ of the dividing curves on $\psi(P)$ is normalized as in Figure \ref{fig:dividings_on_sphere}.
\end{theorem}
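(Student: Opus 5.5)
Start from the isotopy $\psi$ already described before the statement: $\psi(S)$ is convex, and $N=\psi(\nu(K(k,m)))$ is a small standard neighborhood of $\L$. By Theorem \ref{thm:LegendrianRealizationPrinciple}, $\partial N$ meets $\psi(S)$ in four Legendrian meridian curves. Each of these bounds a small disk $D_i$ in $\psi(S)$ meeting $\L$ in one point. The dividing curves on $\partial N$ have slope $1/\tb(\L)$, so each meridian meets $\Gamma_{\partial N}$ in exactly two points. Using Lemma \ref{lemma:ruling_barmi}, take the meridians to be Legendrian rulings of $\partial N$. Their twisting relative to $\psi(S)$ is then $-1$, so each $\partial D_i$ is a Legendrian unknot with $\tb=-1$. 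We then make $\psi(P)=\psi(S)\setminus\bigcup_i D_i$ convex with Legendrian boundary, which is allowed by \cite{Kanda1998,HondaOnTheI} since all boundary twistings are $\le0$. Each boundary component meets $\Gamma_P$ in exactly two points. Hence $\Gamma_P$ consists of exactly four properly embedded arcs, one pair of endpoints on each boundary circle, together with possibly some closed curves.

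Next we restrict the combinatorics. Since $\psi(S)$ has a tight neighborhood, Theorem \ref{thm:GirouxCriterion} says that after edge rounding along the $\partial D_i$ the dividing set on $\psi(S)$ must be connected. Rounding the disks $D_i$, each of which carries a single dividing arc, back into $P$ connects the arc endpoints pairwise on each boundary circle. So $\Gamma_P$ contains no closed components: any closed curve would be a separate component on $S$. Moreover, the four arcs together with the boundary connections must form a single circle. Up to diffeomorphism of the four-punctured sphere fixing the boundary, the possible configurations are then a circular chain of arcs joining the boundary components $p_1\to p_2\to p_3\to p_4\to p_1$ in some cyclic order, and such chains can twist around the punctures. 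The remaining freedom is a mapping class of $P$, generated by half-twists exchanging punctures and Dehn twists about curves separating two punctures from the other two. We remove it by changing $\psi$. Pre-composing $\psi$ with a diffeomorphism of $S^3$ that preserves $K(k,m)$ and $S$ but acts on $P$ by such a twist changes the pullback $\psi^{-1}(\Gamma)$ by that mapping class. Twisting in $\Bout$ along the vertical braid direction, or in $\Bin$ along the horizontal direction, realizes the two half-twist generators. These twists add or remove half-twists in one box while rotating $S$, and they can be undone by a smooth isotopy of the pair $(S^3,K(k,m))$.

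The main obstacle is showing that these moves reduce every configuration to the normal form of Figure \ref{fig:dividings_on_sphere}. Twisting preserves $\K$ only when it is compensated on the other side, so we have to check that the subgroup of the mapping class group of $P$ realizable by isotopies of $S^3$ preserving the knot acts transitively on the admissible connected configurations. For this we would parametrize a configuration by the slope of the separating curve disjoint from it, which is a rational number as for four-punctured sphere arcs. We would then show that the two available twists, vertical and horizontal, generate enough of $PSL_2(\mathbb Z)$ to move every slope to the standard one, using that $k>2$ and $m$ are fixed. Where this fails, for slopes not reachable by knot-preserving twists, we would instead find a boundary-parallel dividing arc on one side. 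That arc gives a bypass via \cite[Proposition 3.17 and 3.18]{HondaOnTheI}, and we attach it using Theorem \ref{thm:bypass_attachment} and Proposition \ref{prop:bypass_rotation} to isotope $S$ and simplify $\Gamma$ without changing $\L$. This is in the spirit of \cite{ENV}; the hypotheses $k>2$, $|m|>2$ or $m=-2$ are what make the required bypasses exist and stay away from the knot.
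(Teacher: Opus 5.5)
Your first paragraph is correct and matches the paper's setup: Legendrian meridians with $\tb=-1$, four arcs of $\Gamma_P$, no closed components, and a single circle on $S$ by Giroux's criterion. The gap is the reduction step, which cannot work.

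\textbf{The mapping-class reduction fails.} Replacing $\psi$ by $\psi\circ\phi$ changes the pullback by a mapping class only if $\phi$ preserves \emph{both} $K(k,m)$ and $S$. Then $\phi|_S$ must extend over both tangles $(\Bout,\K\cap\Bout)$ and $(\Bin,\K\cap\Bin)$. Your half-twists extend over neither. A vertical half-twist in $\Bout$ turns $K(k,m)$ into $K(k\pm1,m)$, and a horizontal one in $\Bin$ gives $K(k,m\pm1)$. These can be different knots, or even two-component links, so no isotopy of the pair undoes them. Even the full twist along $\gamma=\partial\Dout$ preserves only the outer tangle, and the twist along $\gamma'=\partial\Din$ only the inner one. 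Since $\gamma$ and $\gamma'$ are not isotopic in $P$, the two stabilizers project to parabolic subgroups of $PSL_2(\mathbb Z)$ at distinct cusps, which intersect trivially. So, apart from twists about the four boundary circles (which only change the twisting next to $\partial P$), the group you can use is finite. The connected configurations, however, form an infinite family; already the paper's integer slopes $s$ on the annulus $A$ give infinitely many. A finite group cannot act transitively on this set, so transitivity fails for all but finitely many configurations, not only in exceptional cases. Your parametrization is also off: a dividing curve through all four punctures meets every essential curve, so there is no ``separating curve disjoint from it''. Such configurations correspond to Farey edges rather than single slopes.

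\textbf{Everything then rests on your last sentence, where all the actual work lies.} You would need two things.
\begin{enumerate}
\item A source of bypasses for $S$ in the complement of $\L$. In the paper these are boundary-parallel arcs on the convex disks $\Dout$ and $\Din$ bounded by Legendrian realizations of $\gamma$ and $\gamma'$. These disks are disjoint from $\L$ for every $k,m$, so the hypotheses are not what keeps bypasses off the knot.
\item Enough control of $\Gamma_P$ to predict what each bypass does and to see that the process terminates at the normal form.
\end{enumerate}
The paper gets the second point from an auxiliary Legendrian unknot $\U$ in the class of $B$ with $\tb(\U)=-1$ (Lemma~\ref{lem:Unknot}). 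Writing $S=A_1\cup D_1\cup D_2$, with $D_i$ meridian disks of the complement of $N_\U$, reduces $\Gamma_P$ to a single integer slope $s$ on $A$. A case analysis of bypasses along $\gamma$ then drives $s$ to $1$: disallowed bypasses are excluded by Giroux's criterion, the others shift $s$ by one, and bypass rotation handles the two new $s=0$ cases. The boundary twisting is absorbed by enlarging $N$.

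Producing $\U$ is the real core of the proof. One maximizes $\tb(\U)=-n$ in the complement of $\L$. If $n>1$, one uses Lemma~\ref{lem:GammaPi} and the intersection counts of $\gamma,\gamma'$ with $\Gamma_A$ (Lemma~\ref{lem:legfeljebb1komponens}), together with \cite[Claim 4.3]{ENV}, to destabilize $\U$. This is exactly where $k>2$ and $|m|>2$ or $m=-2$ enter. None of this appears in your proposal, so as it stands it does not prove the theorem.
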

\begin{figure}[h!]
	\centering
	\includegraphics[width=0.5\textwidth]{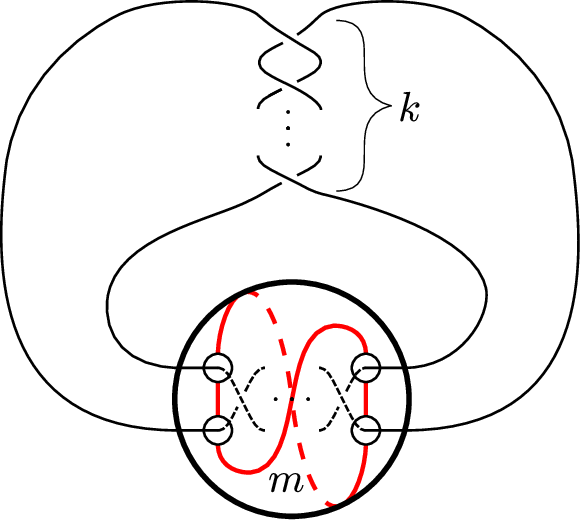}
	\caption{The normalization of $\Gamma_P$ on $\psi(P)$}
	\label{fig:dividings_on_sphere}
\end{figure}

For the sake of easier notations, we will identify every set with its image by $\psi$ and use the same notations throughout the proofs and lemmas.  

To verify Theorem \ref{thm:normalizingGamma}, we need the following lemma.

\begin{lemma}\label{lem:Unknot}
Let $k>2$ and assume that either $|m|>2$ or $m=-2$. Consider a Legendrian realization $\L$ of the double twist knot $K(k,m)$. There exists a Legendrian unknot $\U$ such that $\tb(\U)=-1$ and $\U$ is topologically isotopic to the curve $B$ of Figure \ref{fig:Unknot_B} in the complement of $\L$.
\end{lemma}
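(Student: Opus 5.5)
The plan is to start from a smooth copy of $B$ in the complement of $\L$, make it Legendrian, and then raise its Thurston--Bennequin invariant to $-1$ by bypasses found on a punctured spanning disk, following the argument for twist knots in \cite{ENV}. I assume that $B$, as drawn in Figure \ref{fig:Unknot_B}, bounds an embedded disk $D$ that meets $K(k,m)$ transversely in two points, encircling two strands; I have not seen the figure, so this is a guess about its setup.

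\textbf{Setting up.} Apply an isotopy taking $K(k,m)$ to $\L$ and carry $B$ and $D$ along; $B$ stays disjoint from $\L$. Choose a small standard neighbourhood $N=N(\L)$ (Theorem \ref{thm:standard_neighborhood}) with $\partial N$ convex. By Lemma \ref{lemma:ruling_barmi} the Legendrian rulings on $\partial N$ may be taken meridional, so $D\cap\partial N$ consists of two Legendrian meridians. Each meridian has twisting $-1$ relative to the meridian disk and meets $\Gamma_{\partial N}$ in two points. Approximate $B$ by a Legendrian curve $\mathcal B$ in $S^3\setminus N$; stabilizing if necessary, we may assume $t:=\tb(\mathcal B)\le -1$. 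Then $\Sigma=D\setminus \mathrm{int}(N)$ is a pair of pants with Legendrian boundary and nonpositive twisting on every component, so it can be made convex \cite{Kanda1998,HondaOnTheI}. Its dividing set $\Gamma_\Sigma$ has $2|t|$ endpoints on $\mathcal B$ and $2$ endpoints on each inner boundary, and by Giroux's Criterion (Theorem \ref{thm:GirouxCriterion}) it has no homotopically trivial closed components.

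\textbf{Reducing $\tb$ to $-1$.} Suppose $t<-1$. We look for a dividing arc of $\Gamma_\Sigma$ that is boundary-parallel along $\mathcal B$. Since $\Sigma$ is not a disk, such an arc yields a bypass \cite[Proposition 3.17 and 3.18]{HondaOnTheI}. By Proposition \ref{prop:destab} this bypass destabilizes $\mathcal B$ inside $S^3\setminus N$, so it raises $\tb$ by one without crossing $\L$. Repeating this argument yields $t=-1$. The result is still a smooth unknot, since $B$ is an unknot in $S^3$, and it is smoothly isotopic to $B$ in the complement of $\L$.

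The key step is the combinatorics of $\Gamma_\Sigma$. There are $|t|+2$ arcs in total, and at most $4$ endpoints lie on the two meridians. An arc with both ends on $\mathcal B$ that is not boundary-parallel must separate the two holes, or enclose one of them. Counting nested arcs, for $|t|\ge 3$ some arc is outermost among those joining $\mathcal B$ to itself with nothing inside, and hence is boundary-parallel. The leftover configurations with $t=-2$ have no such arc: there, every arc from $\mathcal B$ goes to a meridian, or a single arc separates the holes.

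\textbf{Main obstacle.} I expect the hard part to be exactly these exceptional configurations, together with the possibility that the only boundary-parallel arcs sit on the meridian boundaries. A bypass along a meridian would instead destabilize $\L$, and that is not available since $\L$ is arbitrary. My first attempt would use Bypass Rotation (Proposition \ref{prop:bypass_rotation}) on $\partial N$ or on a nearby convex sphere. The idea is to move such a bypass to an attaching arc that crosses $\mathcal B$, or to re-choose $D$ by sliding it along one of the twist regions. This is where I expect the hypotheses $k>2$ and $|m|>2$ or $m=-2$ to enter: the extra half-twists give room to isotope $D$ across a crossing, changing which strands it meets and so the arrangement of $\Gamma_\Sigma$. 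That should rule out the non-destabilizable $t=-2$ pictures, or else exhibit a tightness contradiction via Theorem \ref{thm:B3unique} on the ball cut off by $D$.
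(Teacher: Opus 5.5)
There is a genuine gap, and it sits in the step you call the key one. The claim that for $|t|\geq 3$ the dividing set of $\Sigma=D\setminus\mathrm{int}(N)$ must contain an arc that is boundary-parallel along $\mathcal B$ is false. The problematic configurations are also not confined to $t=-2$. Draw $\Sigma$ as a disk with two holes stacked vertically, and take horizontal dividing arcs:
\begin{itemize}
\item two arcs joining $\mathcal B$ to the upper hole;
\item two arcs joining $\mathcal B$ to the lower hole;
\item $|t|-2$ parallel arcs from $\mathcal B$ to itself, running between the holes.
\end{itemize}
This has $2|t|$ endpoints on $\mathcal B$ and two on each meridian, and it has no closed components. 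It also has no boundary-parallel arc at all. Every arc from $\mathcal B$ to itself separates the two holes, and the topmost and bottommost arcs end on the holes.

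This is exactly the configuration the paper arrives at for a $\tb$-maximizing $\U$ in the complement of $\L$ (Lemma \ref{lem:GammaPi}, Figure \ref{fig:Fig19ENV}). So analyzing the punctured spanning disk on its own can never produce the destabilization. A warning sign is that your argument for $|t|\geq 3$ never uses $k>2$ or the condition on $m$. Your suggested remedies for the leftover cases (rotating a meridional bypass, sliding $D$ across a crossing) are not yet an argument.

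The missing idea is an external source of bypasses: disks disjoint from $\L$, lying on both sides of a sphere built out of $\U$.
\begin{itemize}
\item The paper takes two disjoint meridian disks $D_1,D_2$ of the solid torus $Q=\overline{S^3\setminus N_{\U}}$, each meeting $\L$ twice. It joins them by an annulus of $\partial Q$ to form the sphere $S$ separating the two twist regions.
\item On $S$ it chooses curves $\gamma$ and $\gamma'$ bounding disks $\Dout\subset\Bout$ and $\Din\subset\Bin$ disjoint from $\L$. These curves meet $\Gamma_S$ in more than two points. So after Legendrian realization the disks carry boundary-parallel arcs, i.e.\ bypasses for $S$ from outside and from inside.
\item Let $s$ be the slope of the dividing arcs on the annulus $A$. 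The arcs of $\gamma$ in $A$ have slopes $2,0,\dots,0,n,n-2$; this uses $k>2$. The arcs of $\gamma'$ have slope $nm$.
\item The resulting intersection counts give a bypass whose attaching arc meets $P_1'\cup P_2'$ in at most one component (Lemma \ref{lem:legfeljebb1komponens}). These counts are $|s-nm|\geq 2$ for $\gamma'$, or the analogous bounds for $\gamma$, and this is where $|m|>2$ or $m=-2$ enters.
\item Claim~4.3 of \cite{ENV} turns such a bypass into a destabilization of $\U$, after possibly one change of $s$ when $n=3$. This contradicts maximality.
\end{itemize}
Without some such source of bypasses from outside the pair of pants, your argument cannot get past the horizontal configuration above.
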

\begin{figure}[h!]
	\centering
	\includegraphics[width=0.5\textwidth]{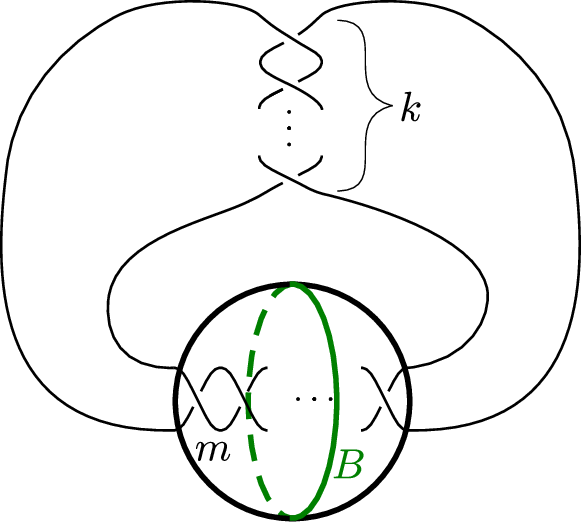}
	\caption{The curve $B$ in the complement of $\L$. There is a Legendrian unknot $\U$ in the complement of $\L$ and in the topological class of $B$ with $\tb(\U)=-1$.}
	\label{fig:Unknot_B}
\end{figure}

We will prove Lemma \ref{lem:Unknot} later in this section, because we first need some further notations and statements. However, to motivate the following discussion, we give a short idea of the proof here: In the topological class of $B$, we can always find a Legendrian knot $\U$ smoothly isotopic to the unknot and disjoint from $\L$. Thus, it is enough to concentrate on $\tb(\U)$. In the standard contact structure, $\tb(\U)=-n$ for some $n>0$. Suppose that $\U$ is chosen in the complement of $\L$ so that $\tb(\U)$ is maximal among Legendrian knots in its topological class. Our aim will be to show that $n>1$ leads to a contradiction.

According to the above reason, in what follows, we suppose that $\U$ is a Legendrian knot in the topological class of $B$, which is smoothly isotopic to  the unknot and disjoint from $\L$. Moreover, we assume that  $\tb(\U)=-n$ for some $n>0$ is maximal. Next, we introduce some notations.

Consider a standard contact neighborhood $N_{\U}$ of $\U$ that is disjoint from $\L$, and let $Q=\overline{S^3\setminus N_{\U}}$ a solid torus $S^1\times D^2$ with convex boundary.
On $\partial Q$, we use the meridian and longitude of $Q$,
corresponding to the longitude and meridian of $N_\U$,
respectively. In these coordinates, the two dividing curves
have slope $-n$.

By Lemma \ref{lemma:ruling_barmi}, ruling curves on $Q$ can be assumed to be meridional, and we can choose two disks $D_1$ and $D_2$ in $Q$ bounded by ruling curves, as shown in \mbox{Figure \ref{fig:disks_D12}}.

\begin{figure}[h!]
	\centering
	\includegraphics[width=0.45\textwidth]{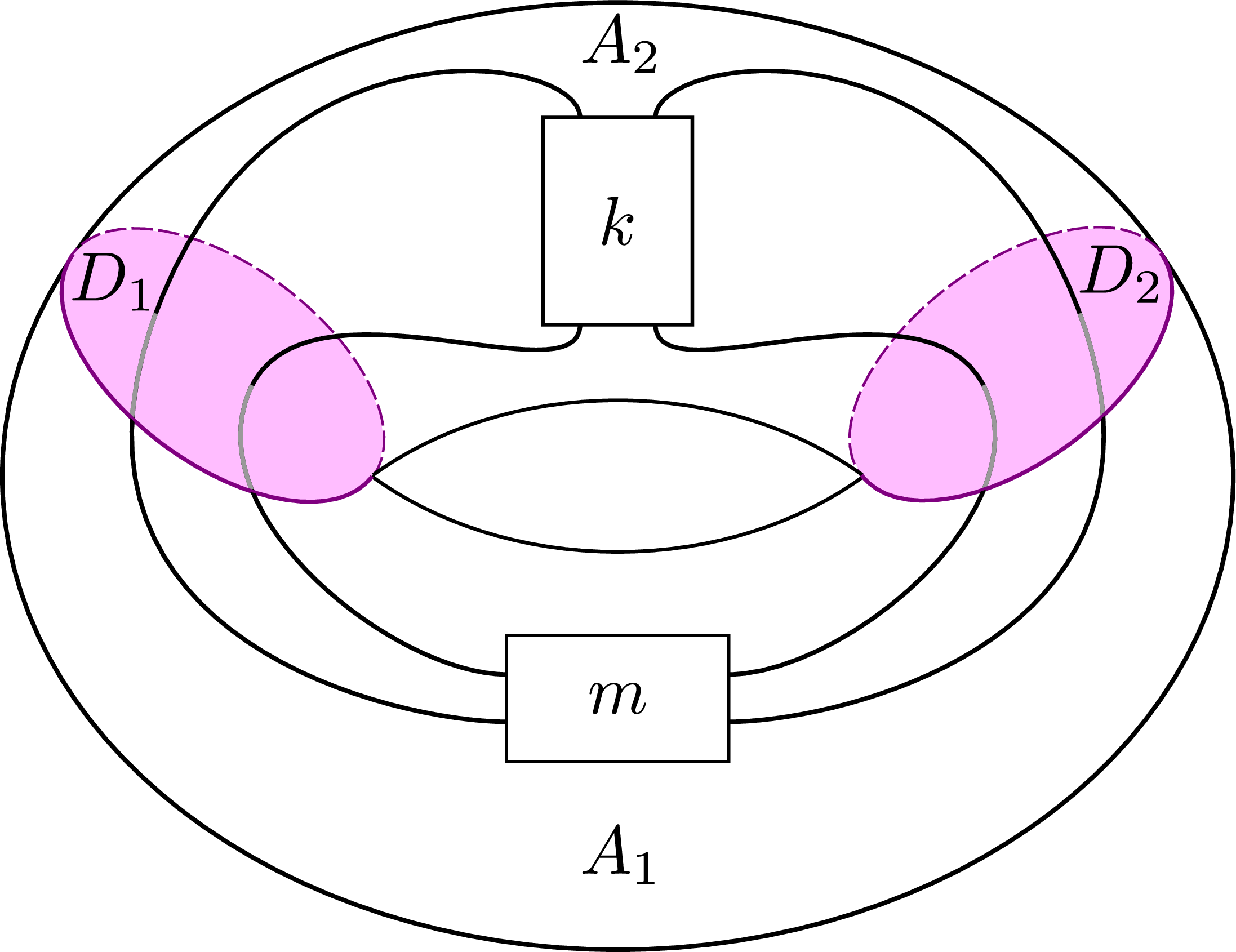}
	\caption{Two disks $D_1$ and $D_2$ in $Q$ bounded by meridional ruling curves}
	\label{fig:disks_D12}
\end{figure}

Isotope $D_1$ and $D_2$ so that they become convex, and that they both intersect the neighborhood $N$ of $\L$ in two disks with Legendrian boundary of $\tb=-1$.
Now $\partial Q\setminus (\partial D_1 \cup \partial D_2)$ are two annuli $A_1$ and $A_2$ and after rounding corners, we can think of $A_1\cup D_1\cup D_2$ as the sphere $S$.

For $i=1,2$, let $P_i=D_i\setminus N$ a pair of pants with three boundary components $c_{i,1}$, $c_{i,2}$ and $c_{i,3}$, where $c_{i,3}$ is the boundary component in $\partial Q$ and by Lemma \ref{lemma:ruling_barmi}, we can assume that $c_{i,1}$ and $c_{i,2}$ are ruling curves in $\partial N$.

Let $A_{i,j}$ denote an annular neighborhood of $c_{i,j}$ in $P_i$, and define $P_i'$ as the closure of the complement of the $A_{i,j}$ regions in $P_i$, that is,
$$P_i'=\overline{P_i\setminus \bigcup\limits_{j=1}^3 A_{i,j}}.\vspace{-0.2cm}$$

\begin{figure}[h!]
	\centering
	\includegraphics[width=0.4\textwidth]{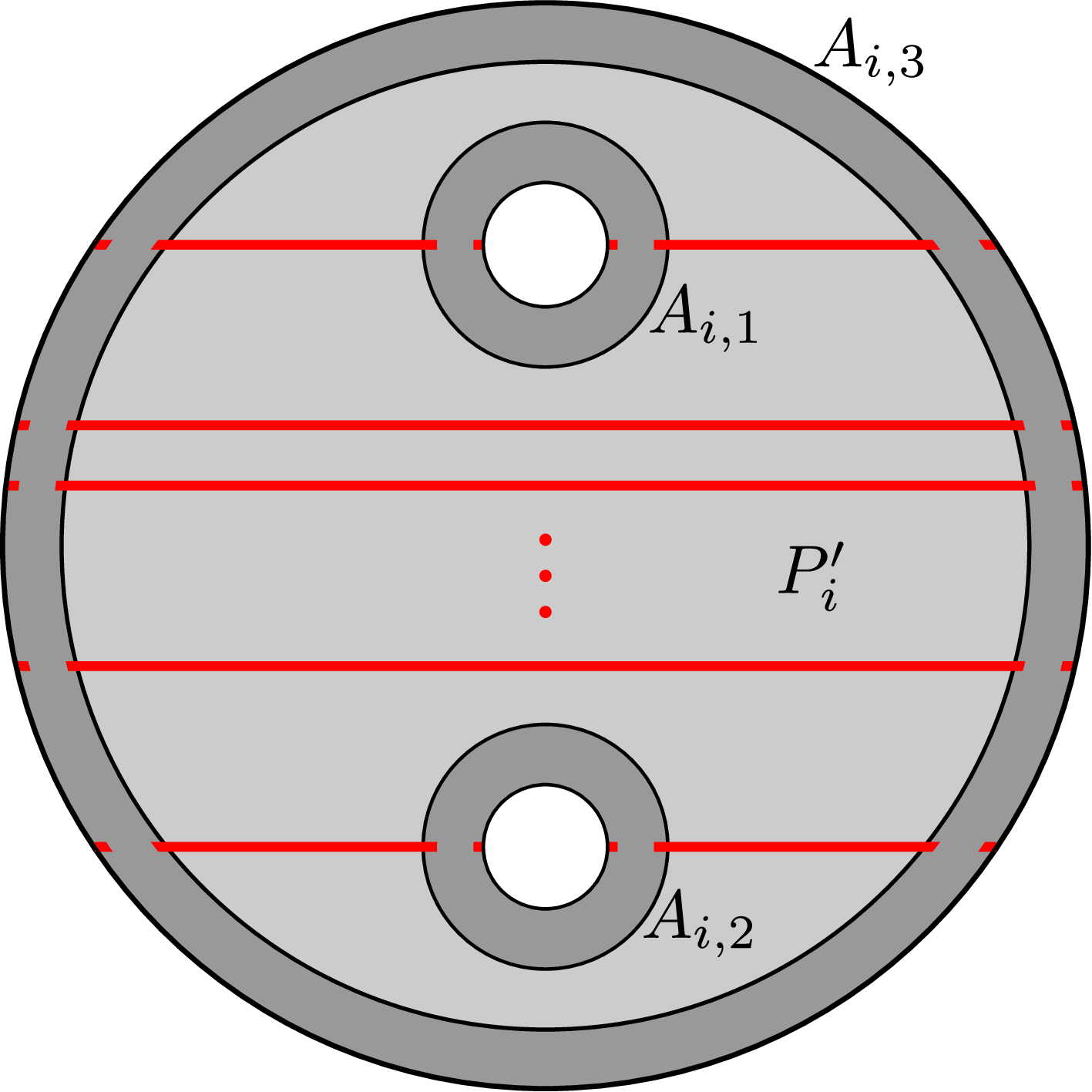}
	\caption{The dividing curves on $P_i$. The inner circles are $c_{i,1}$ and $c_{i,2}$, while $c_{i,3}$ is the outer boundary component of $P_i$. The dark annuli are $A_{i,j}$ and $P_i'$ is the lighter shaded region. The dividing curves are horizontal segments in $P_i'$ and they twist in $A_{i,j}$ in a non-intersecting way.}
	\label{fig:Fig19ENV}
\end{figure}

\begin{lemma}\label{lem:GammaPi}
	Assume $n>1$. The dividing curves on $P_i$ are as in Figure \ref{fig:Fig19ENV}. More precisely, on $P_i'$, we can imagine the dividing curves as horizontal segments intersecting the boundary component of $A_{i,1}$ and $A_{i,2}$ in 2, while the boundary component of $A_{i,3}$ in $2n$ points. On $A_{i,j}$, the dividing curves can be assumed to be the (non-intersecting) extensions of the dividing curves on $P_i'$ with some number of twists between the above mentioned intersection points.
\end{lemma}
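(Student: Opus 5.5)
We first count the intersection points of $\Gamma_{P_i}$ with each boundary component, and then use the maximality of $\tb(\U)=-n$ to exclude boundary-parallel dividing arcs. For the counts: $c_{i,1}$ and $c_{i,2}$ are meridional ruling curves on $\partial N$, the boundary of a standard neighborhood of $\L$. Each bounds a disk meeting $\L$ once and has twisting $-1$, so each meets $\Gamma_{\partial N}$ in exactly $2$ points. Likewise $c_{i,3}$ is a meridional ruling curve on $\partial Q$, where the dividing curves have slope $-n$. Hence $c_{i,3}$ meets $\Gamma_{\partial Q}$ in $2n$ points, i.e.\ $\tw(c_{i,3},D_i)=-n$. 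So $\Gamma_{P_i}$ consists of arcs with $2,2,2n$ endpoints on $c_{i,1},c_{i,2},c_{i,3}$, together possibly with closed curves.

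Next we restrict the components. No closed dividing curve bounds a disk in $P_i$, and none encloses exactly one inner hole: such a curve would be homotopically trivial in $D_i\setminus N$ or would be parallel to $c_{i,j}$. By Giroux's Criterion (Theorem \ref{thm:GirouxCriterion}) this is excluded after the standard argument, which caps off with the meridian disk of $N$ and uses the edge-rounding rule (Theorem \ref{thm:edge_rounding}) to produce a homotopically trivial closed curve on a convex surface in a tight manifold. Parallel curves to $c_{i,3}$ are excluded in the same way. Thus, after isotopy, $\Gamma_{P_i}$ consists only of arcs.

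The key step is to show that no dividing arc is boundary-parallel along $c_{i,3}$. Suppose an arc cut off a half-disk along $c_{i,3}$ containing no other dividing curves. Then \cite[Proposition 3.17 and 3.18]{HondaOnTheI} gives a bypass for $\partial Q$ attached from inside $Q$, i.e.\ from outside $N_\U$, along the meridional ruling curve of $\partial Q$. The bypass lies in $D_i$, so it is disjoint from $\L$. In the coordinates of $N_\U$ this ruling curve is a longitude, of slope $r=\infty$, so $1/r=0>-n+1$ exactly because $n>1$. The Twist Number Lemma (Lemma \ref{lem:twist_number}) then yields a Legendrian unknot in the same topological class in the complement of $\L$ with larger $\tb$, contradicting maximality. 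This is the step where the hypothesis $n>1$ enters.

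Finally, arcs boundary-parallel along $c_{i,1}$ or $c_{i,2}$ cannot occur, since only $2$ endpoints lie there and such an arc would give a bypass destabilizing a $\tb=-1$ unknot; alternatively, count the endpoints directly. An arc with both ends on the same $c_{i,j}$, $j=1,2$, is either boundary-parallel or separates the other two boundary components. The latter forces an outermost boundary-parallel arc elsewhere, which is impossible. An arc from $c_{i,1}$ to $c_{i,2}$ is likewise incompatible with having no boundary-parallel arcs on $c_{i,3}$ when $2n$ endpoints lie there. So every arc runs from $c_{i,1}$ or $c_{i,2}$ to $c_{i,3}$, or from $c_{i,3}$ to itself without cutting off an empty half-disk. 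Such arcs must pass between the two holes. Up to isotopy supported in collars of the boundary, which are the annuli $A_{i,j}$, this yields the horizontal-segment picture on $P_i'$ of Figure \ref{fig:Fig19ENV}, with all remaining freedom absorbed as twisting in the $A_{i,j}$. I expect the main obstacle to be this last combinatorial classification, namely checking that after excluding boundary-parallel arcs the configuration is unique up to Dehn twisting along the boundary annuli.
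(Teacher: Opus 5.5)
Your proposal is correct and follows the same route as the paper. You count the endpoints ($2$, $2$, $2n$), then use maximality of $\tb(\U)$ to exclude a boundary-parallel dividing arc along $c_{i,3}$: the resulting bypass along the longitudinal ruling of $\partial N_\U$ would destabilize $\U$ in the complement of $\L$, and your check $0>-n+1$ is exactly where $n>1$ enters.

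The only difference is bookkeeping. The paper argues on the disk $D_i$ that each hole can absorb at most one boundary-parallel arc of $\Gamma_{D_i}$, whereas you classify arc types in $P_i$ directly. Closed curves and arcs from $c_{i,j}$ ($j=1,2$) back to itself are excluded most simply by noting that they close up to homotopically trivial curves in the convex disk $D_i$; no edge rounding is needed.
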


\begin{proof}
Observe that the dividing curves run around $\partial Q$ once in the meridional and $n$ times in the longitudinal direction. Therefore they intersect $c_{i,3}$ $2n$ times. Furthermore, the dividing curves on $N$ intersect $c_{i,1}$ and $c_{i,2}$ twice. This means that along each inner boundary component of $P_i$ ($c_{i,1}$ or $c_{i,2}$) there can be at most one boundary parallel dividing curve. Thus, if there are more than two boundary parallel dividing curve components on $D_i$, then at least one of them is also boundary parallel in $P_i$, with the boundary component $c_{i,3}$. Having a boundary parallel dividing curve component on $c_{i,3}$ would allow us to do a bypass and destabilize $\U$ in the complement of $\L$, which would contradict the maximality of $\tb(\U)$. So we know that the dividing curves on $P_i$ are as in Figure \ref{fig:Fig19ENV}.
\end{proof}

We continue our preparations with some further definitions and observations. Recall that we thought of $A_1\cup D_1 \cup D_2$ as the sphere $S$. Now we define a curve $\gamma$ in $S$ for $\L$ such that it bounds a disk $\Dout$ in $\Bout$, disjoint from $\L$. Although we generally assume $k>2$ in this section, we describe
the following geometric construction for both signs of $k$.
Depending on the number $k$ in $K(k,m)$, let
$\gamma_0\subset S$ be the curve shown in Figure
\ref{fig:gammacurve}, and define $\gamma$ as the image
of $\gamma_0$ after $\left\lfloor\frac{|k|}{2}\right\rfloor$
Dehn twists on the sphere $S$ along the curve $t$ shown
in Figure \ref{fig:tcurve} (positive Dehn twists if $k$
is positive and negative ones if $k$ is negative). For an example, see Figure \ref{fig:Plgammadefk2} illustrating the definition of $\gamma$ in the $k=2$ case.

\begin{figure}[ht]
	\centering
	\includegraphics[width=0.6\textwidth]{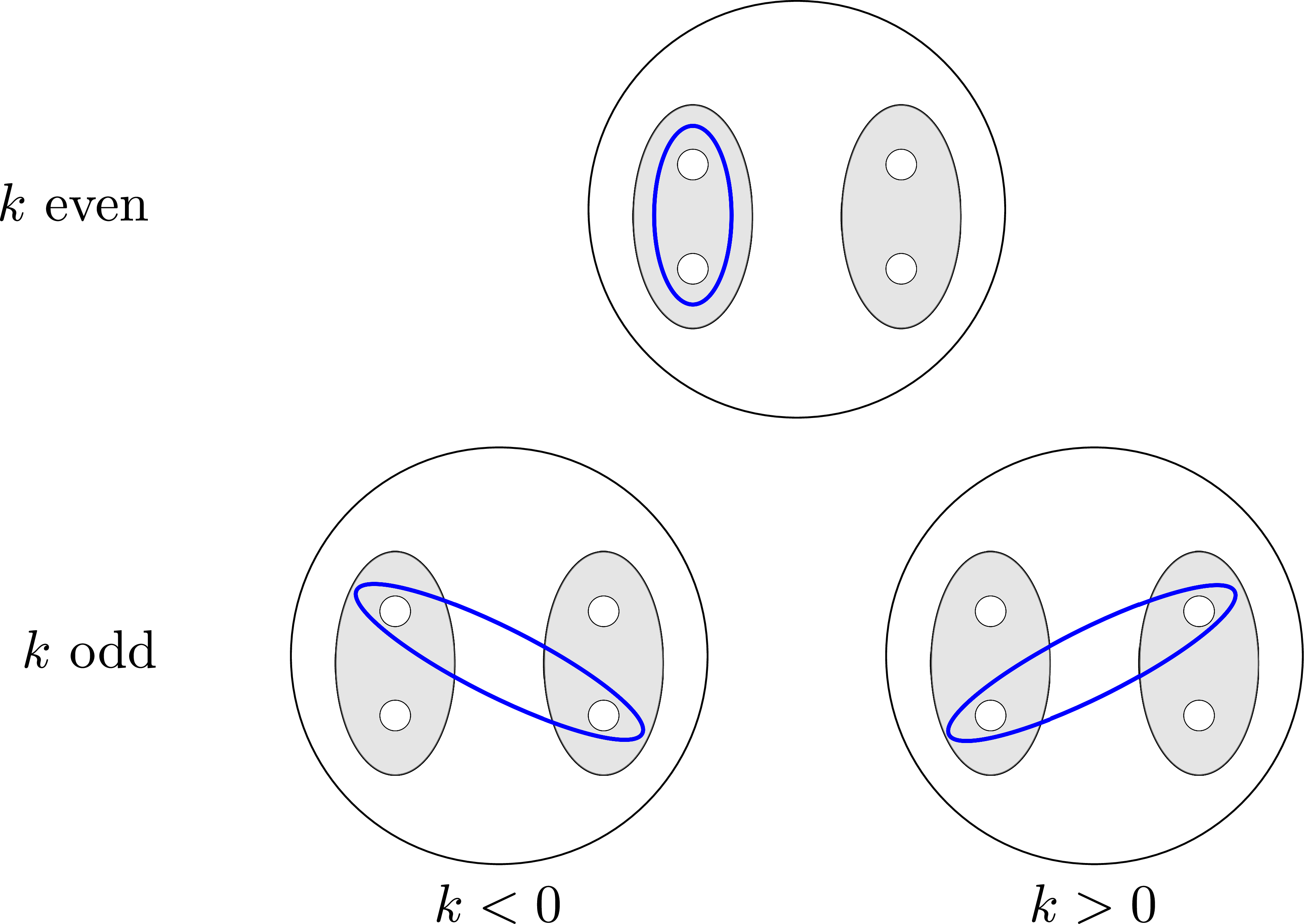}
	\caption{The curve $\gamma_0$ on $S$. The shaded regions are $P_1$ and $P_2$. Note that the definition of $\gamma_0$ depends on the number $k$ in $K(k,m)$.}
	\label{fig:gammacurve}
\end{figure}

\begin{figure}[ht]
	\centering
	\includegraphics[width=0.43\textwidth]{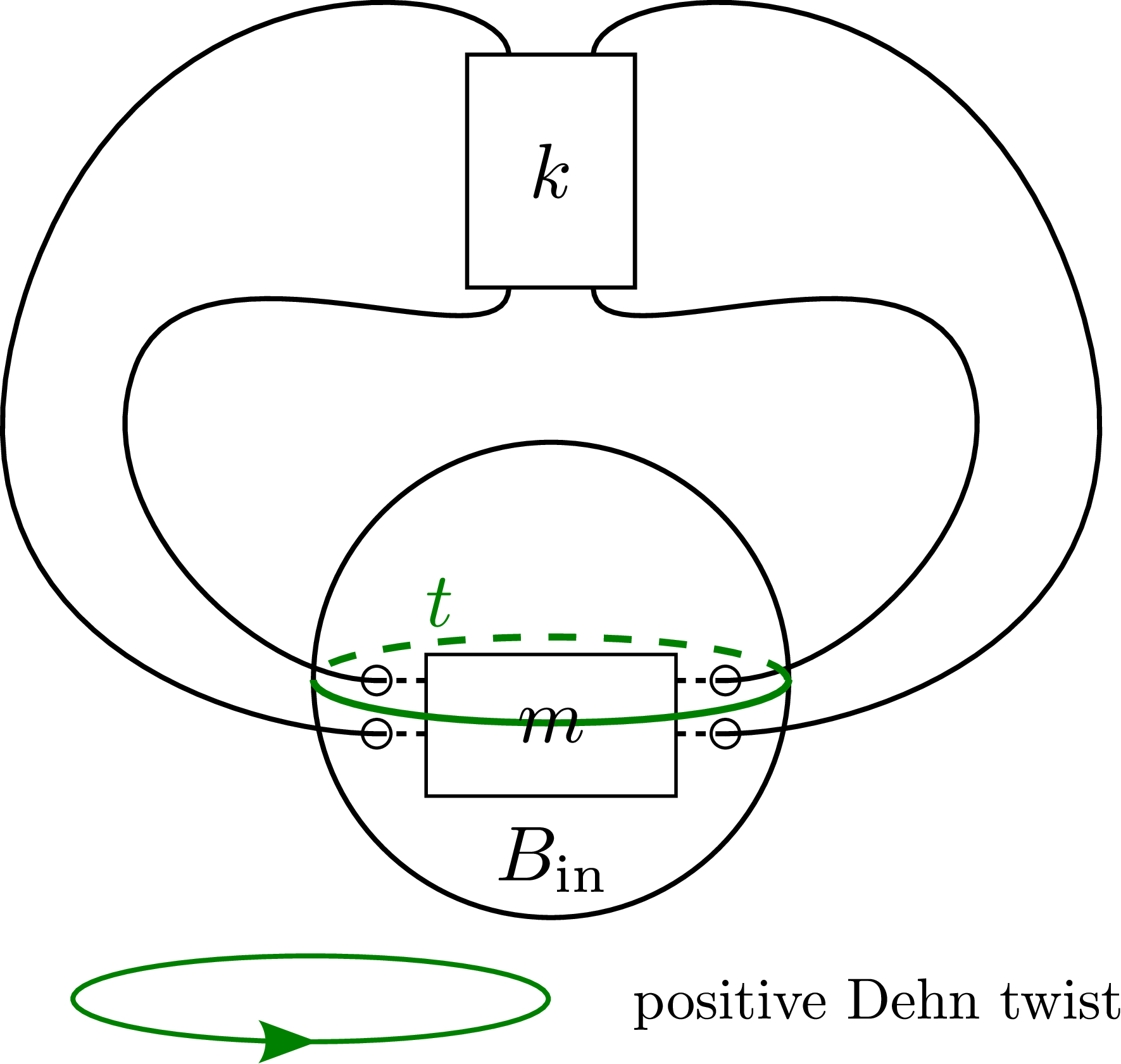}
	\caption{The curve $t$ in $S$ along which we do Dehn twists on $\gamma_0$ to obtain $\gamma$.}
	\label{fig:tcurve}
\end{figure}

\begin{figure}[h!]
	\centering
	\includegraphics[width=0.7\textwidth]{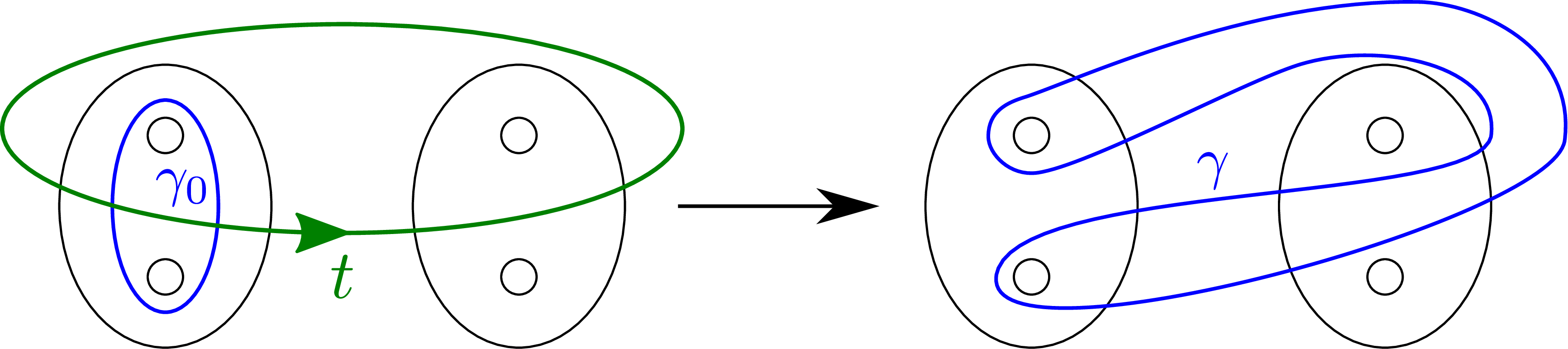}
	\caption{If $k=2$, we obtain the curve $\gamma$ from $\gamma_0$ by a positive Dehn twist along $t$ on $S$.}
	\label{fig:Plgammadefk2}
\end{figure}

For even $k$, first consider an unknot in $S^3$ obtained by fixing $K(k,m)\cap \Bin$ and instead of completing this part with $K(k,m)\cap \Bout$, close it with two unknotted arcs in $\Bout$ as shown on the left hand side of Figure \ref{fig:K(k,m)_lezarasa}. 

\begin{figure}[h!]
	\centering
	\includegraphics[width=0.9\textwidth]{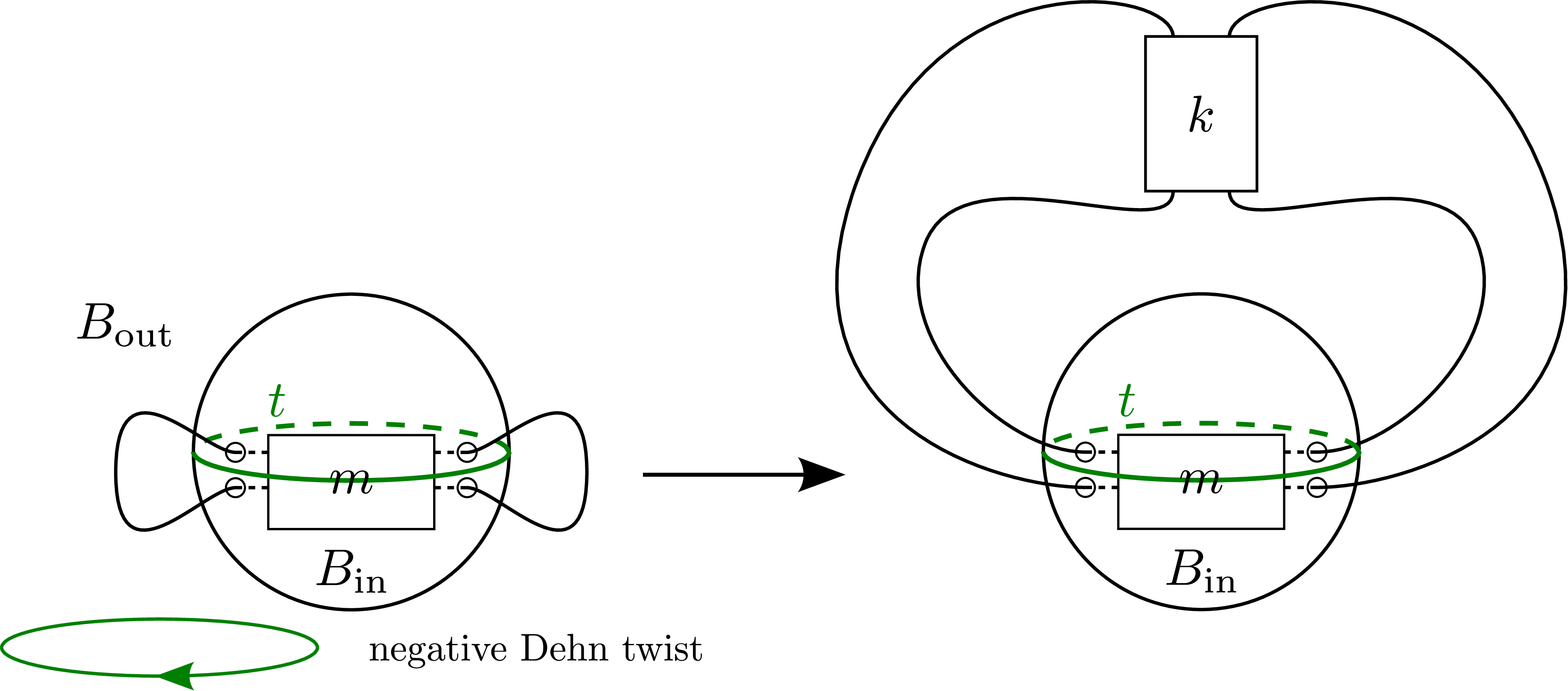}
	\caption{For even $k$, closing $K(k,m)\cap\Bin$ with two
		unknotted arcs in $\Bout$. From this unknot we obtain
		$K(k,m)$ by doing $k/2$ full twists in $\Bout$.}
	\label{fig:K(k,m)_lezarasa}
\end{figure}\vspace{-0.2cm}

Note that from this unknot, we can get $K(k,m)$ by
twisting the upper hemisphere over $t$ and extending
it onto $\Bout$, as on the right hand side of Figure
\ref{fig:K(k,m)_lezarasa}. For $k>0$ we need
$\frac{k}{2}$ negative full twists, while for $k<0$
we need $\frac{|k|}{2}$ positive ones.
With the boundary convention used in Figure
\ref{fig:Plgammadefk2}, these correspond to positive,
respectively negative, Dehn twists along $t$ on $S$.
In the untwisted model, $\gamma_0$ bounds a disk in
$\Bout$ disjoint from the closing arcs. Transporting
the arcs and the disk together gives a disk bounded
by $\gamma$ which is still disjoint from $\L$.

For odd $k$, start with the one-half-twist outer
tangle corresponding to $k=1$ or $k=-1$, according
to the sign of $k$, and the corresponding curve
$\gamma_0$ in Figure \ref{fig:gammacurve}.
This curve bounds a disk disjoint from the tangle.
The remaining $\frac{|k|-1}{2}$ full twists, negative
if $k>0$ and positive if $k<0$, transport this disk
in the same way, see Figure \ref{fig:gammaboundsdisk}
for an illustration.

 \begin{figure}[h!]
 	\centering
 	\includegraphics[width=0.4\textwidth]{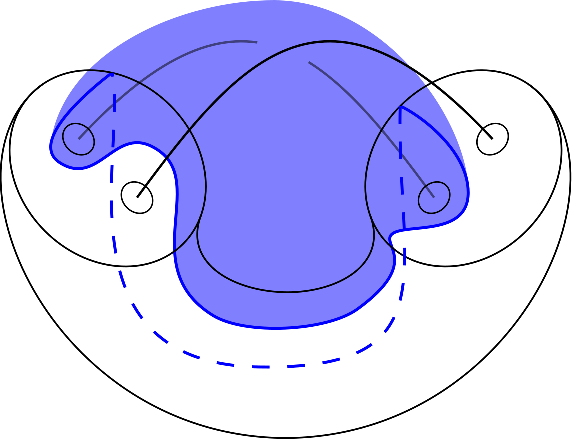}
 	\caption{The curve $\gamma$ bounds a disk in $\Bout$
 		disjoint from $\L$. This figure illustrates the case
 		$k=-1$, where $\gamma=\gamma_0$, and the disk is the
 		blue shaded region. The $k=1$ model is obtained by
 		reflecting the picture.}
 	\label{fig:gammaboundsdisk}
 \end{figure}

Recall the notations of Figure \ref{fig:Fig19ENV}, and consider the annulus $A=A_1\cup A_{1,3}\cup A_{2,3}$ on the sphere, see Figure \ref{fig:annulusA}. We can assume that all the intersection points of the curve $\gamma$ and the dividing curve of $S$ are in $A$. Examine the intersection of $\gamma$ with the annulus $A$. We can choose $\gamma_0$ so that it intersects $A$ in 2 arcs if $k$ is odd, while is disjoint from $A$ if $k$ is even (as in Figure \ref{fig:gammacurve}). After each Dehn twist 4 further arcs of intersection occur (2 at the front and 2 at the back of the sphere $S$), see \mbox{Figure \ref{fig:slopes_pspt}}. Therefore, in every case, the intersection of $\gamma$ with the annulus $A$ consists of $4\cdot \frac{k}{2}=2+4\cdot\frac{k-1}{2}=2k$ arcs, let us denote them by $\gamma_1,\gamma_2,...,\gamma_{2k}$.

\begin{figure}[h!]
	\centering
	\includegraphics[width=0.4\textwidth]{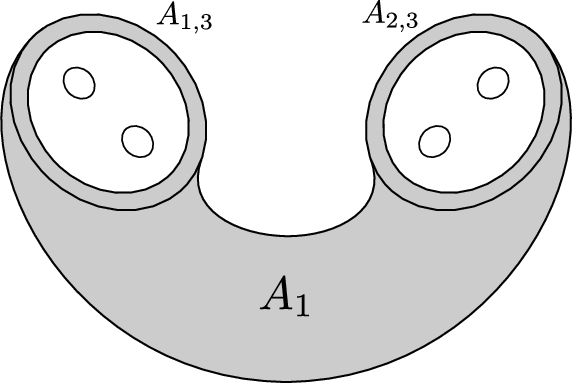}
	\caption{The annulus $A=A_1\cup A_{1,3}\cup A_{2,3}$ on the sphere $S$}
	\label{fig:annulusA}
\end{figure}

Isotope $\gamma$ so that it intersects $P_i'$ in horizontal arcs, and identify the closure of $A$ with $S^1\times [0,1]$. We understood the dividing curves on $\partial Q$. Consequently, we also know that there are $2n$ arcs of the dividing curves on $A$, running from one boundary component to the other in a non-intersecting way. Think of $S^1$ as the interval $[0,2n]$ with identified endpoints. We can assume that the $2n$ arcs of the dividing curves on $A$ intersect both $S^1\times \{0\}$ and $S^1\times \{1\}$ in $2n$ equally spaced points, thus the two boundary components of $A$ are  partitioned into $2n$ arcs.
For an arc between the two boundary components of $A$, define its \emph{slope} as the slope of the line on the cylinder
$$\faktor{[0,2n]\times [0,1]}{(\{0\}\times [0,1])\sim (\{2n\}\times[0,1])}$$
representing this arc. We work in a tight contact structure, so by Giroux's Criterion (Theorem \ref{thm:GirouxCriterion}) the dividing curve on $S$ is connected. From this, we also know that the slope $s$ of the dividing curves on $A$ must be relatively prime to $n$.

Then, for $k> 2$, we can choose $\gamma_1,\gamma_2,...,\gamma_{2k}$ so that their slopes are
$$2, 0,...,0,n, n-2,$$
respectively, see Figure \ref{fig:slopes_pspt}.


\begin{figure}[h!]
	\centering
	\includegraphics[width=1\textwidth]{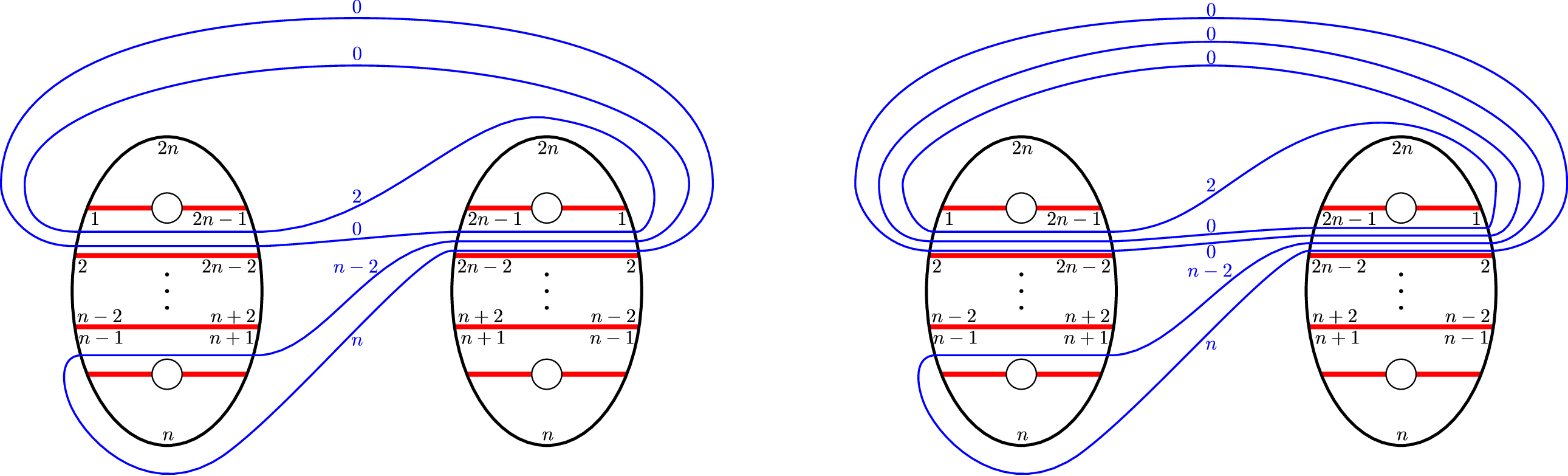}
	\caption{The blue arcs between the regions $P_1$ and $P_2$ are $\gamma_1,\gamma_2,...,\gamma_{2k}$, and the blue number on them is their slope. The $2n$ arcs on the boundary of $P_i$ determined by the dividing curves are numbered with black. For $k> 2$, we can isotope $\gamma_1,\gamma_2,...,\gamma_{2k}$ so that their slopes are $2, 0,...,0, n, n-2$. This figure illustrates the cases when $k=3$ (left) and $k=4$ (right). }
	\label{fig:slopes_pspt}
\end{figure}

Similarly, we define a curve $\gamma'$ in $S$ for $\L$ such that it bounds a disk $\Din$ in $\Bin$ disjoint from $\L$. Depending on the number $m$ in $K(k,m)$, let $\gamma'_0$ be the curve shown in Figure \ref{fig:gammaprime}, and define $\gamma'$ as the image of $\gamma'_0$ after some number of Dehn twists on $S$ along the curve $t'$. The signed number of Dehn twists is $\frac{m}{2}$ when $m$ is even and $\frac{m-1}{2}$ when $m$ is odd, with negative numbers denoting negative Dehn twists.

\begin{figure}[h!]
	\centering
	\includegraphics[width=0.3\textwidth]{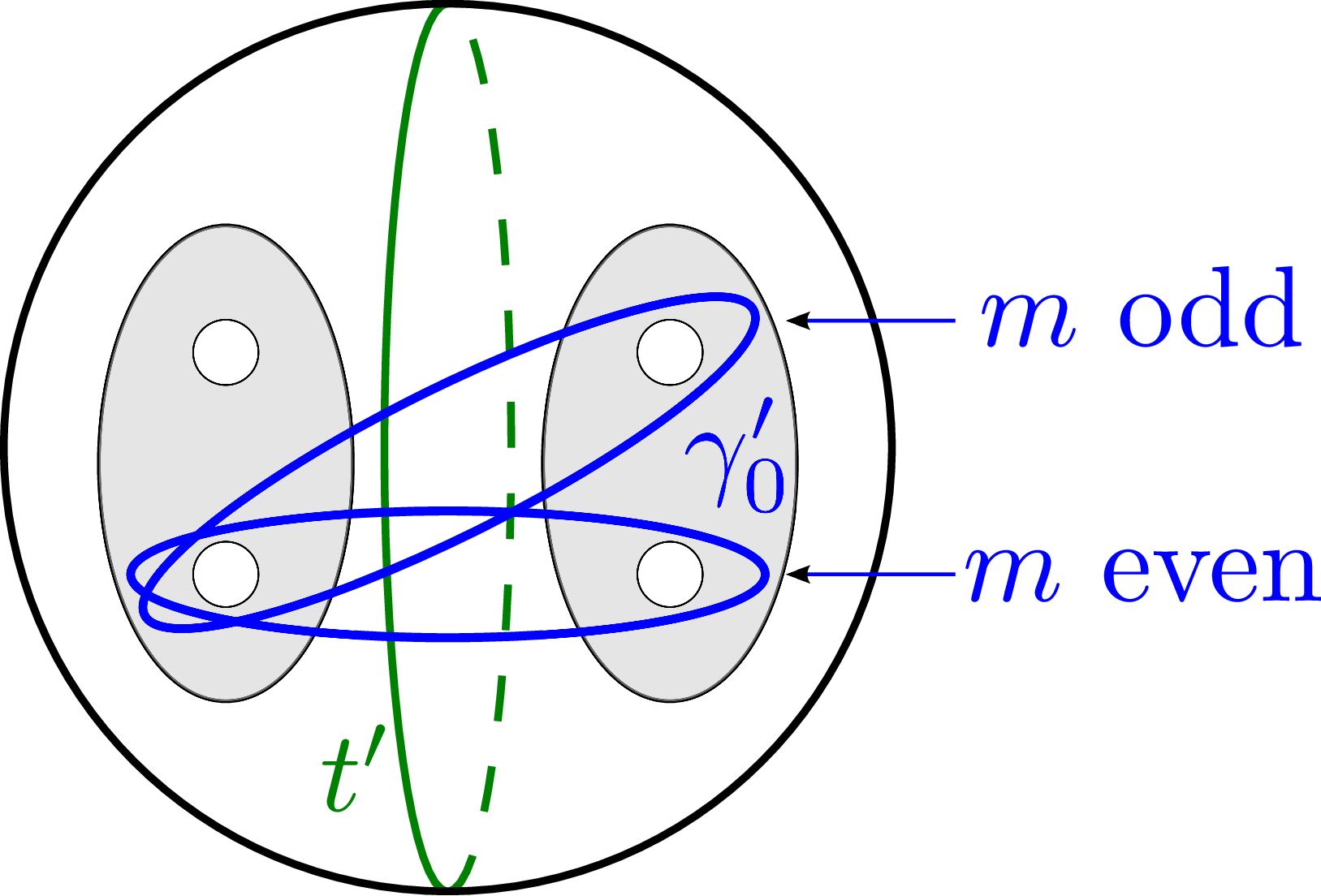}
	\caption{The curves $\gamma'_0$ and $t'$ in $S$. The shaded regions are $P_1$ and $P_2$. The definition of $\gamma'_0$ depends on the number $m$ in $K(k,m)$.}
	\label{fig:gammaprime}
\end{figure}

Again, we can assume that all the intersection points of $\gamma'$ and the dividing curves of $S$ are in $A$. Observe that $\gamma'$ always intersects the annulus $A$ in two parallel arcs $\gamma'_1$ and $\gamma'_2$ of slope $nm$. (If $m$ is even, then $2n\cdot \frac{m}{2}$, if $m$ is odd, then $n+2n\cdot \frac{m-1}{2}$.)

Legendrian realize $\gamma$ and $\gamma'$ and make $\Dout$ and $\Din$ convex without moving $\gamma$ and $\gamma'$. If $\tb(\gamma)$ or $\tb(\gamma')$ is not maximal, then we can destabilize the curve by finding boundary parallel dividing arcs on $\Dout$ or on $\Din$. We can also notice, that in this case, $\Dout$ or $\Din$ has at least two boundary parallel dividing arcs, since a disk cannot have only one boundary parallel dividing arc. This means that we can find at least two bypasses for $S\setminus N_{\U}$, disjoint from $\L$.
Let $c$ denote a curve along which such a bypass is attached. 
Recall that by Lemma \ref{lem:GammaPi}, the dividing curves on $P_i$ are arranged as in Figure \ref{fig:Fig19ENV}, and that $\gamma$ and $\gamma'$ intersect the dividing curves of $S$ only on the annulus $A$. Therefore, if $c\cap (P'_1\cup P'_2)$ has at most one component, then we can apply the following result. Claim 4.3 and Remark 4.4 of Etnyre, Ng and
Vértesi \cite{ENV}, together with the intersection argument
below, give the statement below in our setting:

\begin{proposition}\cite{ENV}\label{Claim43}
If $n>1$ and there is a bypass attached from inside or outside along a curve $c$ that intersects $P'_1\cup P'_2$ in at most one component then $\U$ can be destabilized.
\end{proposition}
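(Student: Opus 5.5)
The plan is to follow the proof of Claim 4.3 in \cite{ENV} and reduce to a bypass along a Legendrian ruling curve of $\partial Q = \partial N_{\U}$. Then the Twist Number Lemma (Lemma \ref{lem:twist_number}) destabilizes $\U$. Throughout, $n>1$, so Lemma \ref{lem:GammaPi} gives the picture of $\Gamma$ on $P_1\cup P_2$ in Figure \ref{fig:Fig19ENV}. The bypass is attached to $S\setminus N_{\U}$, and its attaching arc $c$ meets $\Gamma$ in three points.

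\textbf{Step 1: push the attaching arc into the annulus $A$.} Suppose first that $c\cap(P_1'\cup P_2')$ is empty. Then all three intersection points of $c$ with $\Gamma$ lie in $A=A_1\cup A_{1,3}\cup A_{2,3}$. Suppose instead that $c$ meets $P_1'\cup P_2'$ in exactly one component. On $P_i'$ the dividing curves are parallel horizontal segments, so that component is an arc crossing some horizontal dividing segments. Using the rectangle structure on $P_i'$, I will slide this arc along the horizontal segments until it exits into $A_{i,3}\subset A$, without changing the combinatorics of the intersections with $\Gamma$. I expect the isotopy to respect bypass existence because $\Gamma$ is parallel there; where this is unclear, I will instead invoke Bypass Rotation (Proposition \ref{prop:bypass_rotation}) to replace $c$ by an arc to its left that avoids $P_i'$. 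Either way, I obtain a bypass along an arc whose intersections with $\Gamma$ all lie in $A\subset\partial Q$.

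\textbf{Step 2: turn it into a bypass on $\partial Q$ with good slope.} The annulus $A$ lies in $\partial Q$, up to the rounded corners with $D_1,D_2$. Since $\Gamma|_A$ consists of $2n$ parallel arcs, the arc from Step 1 is isotopic, within the region where $\Gamma$ is parallel, to a subarc of a ruling curve of $\partial Q$. Using Lemma \ref{lemma:ruling_barmi}, I can choose that ruling curve to have slope $\infty$ or any other convenient slope in $\partial N_{\U}$ coordinates. A bypass along three consecutive intersections of a ruling curve with the two dividing curves of slope $1/\tb(\U)=-1/n$ is exactly the situation of Lemma \ref{lem:twist_number}. The slope condition $1/r>\tb(\U)+1=1-n$ holds for meridional rulings of $N_{\U}$ (when $r=\infty$, $1/r=0>1-n$ because $n>1$).

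\textbf{Step 3: conclude.} Lemma \ref{lem:twist_number} then gives a destabilization of $\U$. For a bypass attached from outside $N_{\U}$, this is immediate. For one attached from inside, i.e.\ from the $Q$ side, I will use the fact that a bypass for $\partial Q$ from within $Q$ away from $\L$ is a bypass from outside $N_{\U}$; the two cases differ only in which side of $S$ the half-disk lies on. The destabilized knot is still disjoint from $\L$, so this contradicts the maximality of $\tb(\U)=-n$ as used in Lemma \ref{lem:GammaPi}.

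\textbf{Main obstacle.} I expect the main obstacle to be Step 1. There I must verify that the bypass really transfers across the twisting regions $A_{i,1},A_{i,2}$, and that the three intersection points cannot be forced to straddle $c_{i,1}$ or $c_{i,2}$. I will handle the latter by noting that a straddling arc would give a bypass along $\partial N$ with $\tb=-1$ boundary disks, which is disallowed by Giroux's Criterion, or else would be redundant.
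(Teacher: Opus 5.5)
Your reduction works in exactly one subcase: $c\subset A$ with the half-disk in $\Bin$. There $c$ slides through the rectangular regions of $A\setminus\Gamma_A$ to a horizontal arc of $A_1\subset\partial Q$, which is a subarc of a longitudinal ruling of $N_{\U}$. The disk lies in $Q$, and Lemma~\ref{lem:twist_number} with $1/r=0>1-n$ destabilizes $\U$. Steps 1 and 3 do not cover the remaining cases.

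\textbf{Step 3.} You treat both sides of $S$ as lying outside $N_{\U}$. Along $A\approx A_1\subset\partial N_{\U}$, however, one side of $S$ is the piece of $Q$ between $D_1$ and $D_2$, and the other side is $N_{\U}$ itself. The first piece is $\Bin$: the inner braid runs through it from $D_1$ to $D_2$, which is why $\gamma'$ meets $A$ in arcs of slope $nm$. Hence $\Bout\supset N_{\U}$. A bypass from outside, along an arc of $\gamma$ in $A$, therefore starts into $N_{\U}$. It is not a bypass for $\partial N_{\U}$ from the complement, so the Twist Number Lemma does not apply. In fact, by the Farey graph such a half-disk cannot stay inside $N_{\U}$: attached to $\partial N_{\U}$ from inside along a longitudinal ruling, it would produce a torus of meridional slope $0$ in a tight solid torus.

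\textbf{Step 1.} A component of $c\cap P_i'$ can run between the two holes $c_{i,1},c_{i,2}$. For subarcs of $\gamma'$ this happens whenever $c$ contains a passage through $P_i'$, since $\gamma'\cap D_i$ separates the two points of $\L\cap D_i$. Such a component cannot be slid out to $A_{i,3}$ without crossing $\L$. Bypass Rotation does not rescue this: you would have to exhibit the rectangle of Proposition~\ref{prop:bypass_rotation}, and it only moves the arc to the left. Your claim that a ``straddling'' arc is a disallowed bypass along $\partial N$ is also unsupported. Such a bypass is a bypass for $S$, not for any torus around $\U$.

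\textbf{What the actual argument does.} These are precisely the configurations where one bypass need not destabilize $\U$. The Remark following Proposition~\ref{Claim43} records how \cite[Claim 4.3]{ENV} handles them. For $n\neq3$ the argument gives a destabilization directly. For $n=3$, the bypass may instead shift the slope $s$ of $\Gamma_A$, by $+1$ from outside and by $-1$ from inside. One then needs a second bypass from the same side of $S$. It exists because a non-maximal $\gamma$ or $\gamma'$ produces at least two boundary-parallel arcs on $\Dout$ or $\Din$. The intersection bounds behind Lemma~\ref{lem:legfeljebb1komponens} must also be rechecked after the shift: use $\gamma'$ when $|s-3m|\geq3$, and $\gamma$ otherwise. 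This is why Corollary~\ref{Claim43Corollary} says ``possibly after changing $s$ once''.

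Your proposal has no mechanism for this slope-tracking, two-bypass argument. As it stands it proves the statement only for bypasses from $\Bin$ along arcs contained in $A$.
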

\begin{remark}
	In the proof of Proposition \ref{Claim43}, the following statements \mbox{are verified}: 
	\begin{itemize}
		\item if $n\neq 3$ then $\U$ can be destabilized,
		\item if $n=3$ then either $\U$ can be destabilized or the slope $s$ of the dividing curves on $A$ can be increased by 1 if $c$ is attached from outside and decreased by 1 if $c$ is attached from inside. After changing $s$ once, any further bypasses attached from the same side of $S$ will yield a destabilization of $\U$. Moreover, we can always find two consecutive bypasses along arcs of $\gamma$ or $\gamma'$. For $n=3$, we use $\gamma'$ when $|s-3m|\geq3$, and $\gamma$ otherwise. In the latter case, $s\geq7$ or	$s\leq-4$. The required intersection bounds remain valid after the first exceptional bypass in either case. Thus, also in this case, $\U$ can be destabilized.
	\end{itemize}
For the details see \cite[Claim 4.3.]{ENV}.
\end{remark}


\begin{corollary}\label{Claim43Corollary}
	If $c\cap (P'_1\cup P'_2)$ has at most one component and $n>1$, then (possibly after changing $s$ once) we can find bypasses such that $\U$ destabilizes, contradicting the maximality of $\tb(\U)$.
\end{corollary}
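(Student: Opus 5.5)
The corollary follows by combining Proposition \ref{Claim43} with the case analysis in the subsequent Remark, applied to the bypasses that we already know exist. Since $n>1$, suppose $\U$ cannot be destabilized directly. We first produce bypasses for $S\setminus N_{\U}$ disjoint from $\L$. For this we Legendrian realize $\gamma$ and $\gamma'$ and make $\Dout$ and $\Din$ convex. If $\tb(\gamma)$ or $\tb(\gamma')$ is not maximal, then the corresponding disk carries at least two boundary-parallel dividing arcs. By \cite[Propositions 3.17, 3.18]{HondaOnTheI} these give at least two bypasses, attached from outside (via $\Dout$) or from inside (via $\Din$). Their attaching curve $c$ lies on $\gamma$ or $\gamma'$, and by hypothesis $c\cap(P_1'\cup P_2')$ has at most one component.

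If $n\neq 3$, we apply Proposition \ref{Claim43} to one such bypass. It yields a destabilization of $\U$ in the complement of $\L$, whose $\tb$ is $-n+1$. This contradicts the maximality of $\tb(\U)$ among Legendrian unknots in the class of $B$.

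If $n=3$, the first bypass either destabilizes $\U$ (and we are done as before) or shifts the slope $s$ of the dividing curves on $A$ by $\pm1$, according to whether it is attached from outside or inside. After this single change we use the second bypass coming from the same disk, hence attached from the same side of $S$. By the Remark, this further bypass destabilizes $\U$.

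The main obstacle is to make sure the second bypass still satisfies the hypotheses after $s$ has changed. Its attaching arc must still meet $P_1'\cup P_2'$ in at most one component, and the intersection counts with the new dividing set must remain controlled. For this we choose the curve as the Remark prescribes: $\gamma'$ when $|s-3m|\geq3$, and $\gamma$ otherwise, in which case $s\geq 7$ or $s\leq -4$. We then check that the arcs $\gamma_i$ with slopes $2,0,\dots,0,n,n-2$, respectively the arcs $\gamma'_1,\gamma'_2$ of slope $nm$, still intersect the modified dividing curves on $A$ enough times for the disk to be non-maximal. We also check that the slope change keeps $\gcd(s,n)=1$, as Giroux's Criterion requires. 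With these checks in place, two consecutive bypasses from the same side give the destabilization, which contradicts the maximality of $\tb(\U)$.
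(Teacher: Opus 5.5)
Your proposal is correct and follows the paper's argument: the corollary is exactly Proposition \ref{Claim43} combined with the $n=3$ analysis of the subsequent Remark, including the choice of $\gamma'$ when $|s-3m|\geq 3$ and of $\gamma$ otherwise, so that after the single slope change the intersection bounds (and hence the hypothesis on $c\cap(P_1'\cup P_2')$) still hold for a bypass from the same side. Two small points: the second bypass should come from re-realizing the chosen curve on the modified sphere, not from the other boundary-parallel arc of the original disk (whose attaching arc may overlap the first), and $\gcd(s\pm1,n)=1$ follows automatically from tightness, so it needs no separate check.
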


Let us examine how many components $c\cap (P'_1\cup P'_2)$ has.

\begin{lemma}\label{lem:legfeljebb1komponens}
Assume $n>1$. There is a bypass (attached from inside or outside) along a curve $c$ that intersects $P'_1\cup P'_2$ in at most one component.
\end{lemma}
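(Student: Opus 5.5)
We will use the convex disks $\Dout$ and $\Din$ bounded by the Legendrian realizations of $\gamma$ and $\gamma'$. The plan is to show that when $n>1$ at least one of these disks has non-maximal $\tb$ of its boundary, and then to locate one of its boundary-parallel dividing arcs so that the corresponding bypass arc $c$ meets $P'_1\cup P'_2$ in at most one component.

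\textbf{Computing the twisting.} First we compute $\tw(\gamma,S)$ and $\tw(\gamma',S)$. Each is $-\frac12|\gamma\cap\Gamma_S|$ (resp.\ $-\frac12|\gamma'\cap\Gamma_S|$), and all intersections lie in $A$. On $A$ the dividing set consists of $2n$ arcs of slope $s$. An arc of slope $\sigma$ on the cylinder $[0,2n]\times[0,1]$ therefore meets these arcs essentially $|\sigma-s|$ times, up to a bounded correction coming from the endpoints on the partitioned boundary circles. Summing over the arcs $\gamma_1,\dots,\gamma_{2k}$ with slopes $2,0,\dots,0,n,n-2$ gives
$$|\gamma\cap\Gamma|\approx|2-s|+(2k-4)|s|+|n-s|+|n-2-s|.$$
Similarly, $\gamma'_1,\gamma'_2$ of slope $nm$ give $|\gamma'\cap\Gamma|\approx 2|nm-s|$.

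\textbf{Comparing with the disk bound.} A disk with Legendrian boundary in the tight $S^3$ has $\tw\le-1$, with equality exactly when its dividing set is a single arc. If both $\gamma$ and $\gamma'$ had only two intersection points with $\Gamma$, the estimates above would force $s$ to be simultaneously close to $0$, to $n$ and to $nm$. This is impossible when $n>1$, $k>2$ and $|m|\ge2$ (with $m=-2$ included), since $s$ is coprime to $n$. So at least one disk, say $\Dout$, carries at least two boundary-parallel dividing arcs, and hence at least two bypasses for $S$, from the outside or the inside respectively.

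\textbf{Finding a good bypass.} A boundary-parallel arc on $\Dout$ cuts off a half-disk whose attaching arc $c\subset\gamma$ meets $\Gamma_S$ in three consecutive points. By Lemma~\ref{lem:GammaPi}, $\gamma$ crosses $P'_i$ in horizontal segments disjoint from $\Gamma$. Consecutive intersection points of $\gamma$ with $\Gamma$ are separated either by a subarc inside $A$ or by a passage through some $P'_i$. Hence $c$ meets at most two such passages. The remaining case is when every bypass arc straddles two passages through $P'_1\cup P'_2$.

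We rule this out by counting. Along $\gamma$ there are $2k$ passages through $P'_1\cup P'_2$, alternating with the arcs $\gamma_j$. A bad attaching arc requires an arc $\gamma_j$ meeting $\Gamma$ exactly once, flanked by arcs meeting $\Gamma$ at least once. The slopes $0$ (there are $2k-4\ge2$ of them when $k>2$) together with $s\neq0$ give many arcs $\gamma_j$ with $|s|$ intersections. If the boundary-parallel arcs sat only at bad positions, the dividing set of $\Dout$ would have to look like a configuration that is incompatible with the count of $\gamma\cap\Gamma$. If this direct count fails, the fallback is Bypass Rotation (Proposition~\ref{prop:bypass_rotation}). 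A bad arc $c$ can be slid along the dividing curves, to the left across the rectangle formed with the adjacent arc inside $A$, to an arc $c'$ that meets only one passage, keeping the side of attachment. The same argument applies to $\gamma'$, whose two parallel arcs make the situation simpler.

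\textbf{Main obstacle.} The step I expect to be hardest is this combinatorial case analysis of $s$ relative to $0$, $2$, $n-2$ and $n$. In particular, the cases $|s|\le1$ or $s$ near $n$ need care, and one must verify in each that the rotation rectangle exists without the arc reintersecting its boundary.
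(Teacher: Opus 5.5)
There is a genuine gap, and it is exactly the step you flag as the main obstacle. After your second step you fix whichever disk has non-maximal $\tb$ (``say $\Dout$'') and then try to show that one of \emph{its} boundary-parallel arcs is good. You have no control over where those arcs sit.

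For instance, $s=-1$ is allowed: it is coprime to $n$, and all slope differences are then negative, so the intersections of $\gamma$ with $\Gamma_A$ are essential. Yet each of the $2k-3$ arcs of $\gamma$ of slope $0$ (not $2k-4$) meets $\Gamma_A$ exactly once. So a subarc of $\gamma$ with three intersection points can pass through $P'_1$, cross one such arc, and pass through $P'_2$. A dividing set on a disk can have as few as two boundary-parallel arcs (a nested family), so pure counting does not force one of them into a good position. Your bypass-rotation fallback is also unverified: you never exhibit the rectangle, nor check that the rotated arc meets only one passage.

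Two further points are wrong as stated. First, your claim that a three-point arc meets at most two passages fails when some $\gamma_j$ misses $\Gamma_A$, e.g.\ $s=2$ or $s=n-2$ with $n$ odd. Second, your ``$\approx$'' ignores canceling pairs, which are excluded only when the relevant slope differences share a sign.

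The missing idea is to choose the curve according to $(n,s)$, not according to which disk happens to carry a bypass, and to choose it so that \emph{every} potential attaching arc is good. Suppose each component of the chosen curve in $A$ meets $\Gamma_A$ in at least two points, with no canceling pairs. Then a subarc meeting $P'_1\cup P'_2$ in two components contains a whole such component plus one further intersection point on each side, hence at least four points. So every three-point subarc meets $P'_1\cup P'_2$ in at most one component. Moreover, the curve then has at least four intersection points, so its disk automatically carries a bypass.

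The conditions are then arithmetic:
\begin{itemize}
\item For $\gamma'$ the condition is $|s-nm|\geq 2$. It holds for $|s|\leq 3n-2$ when $|m|\geq 3$, since $|s-nm|\geq 3n-|s|$. It holds for all $s\geq -1$ when $m=-2$, since $s+2n\geq 2n-1\geq 3$.
\item For $\gamma$ the condition is that $s-2,\ s,\ s-n,\ s-n+2$ share a sign and all have absolute value at least $2$, i.e.\ $s\geq n+2$ or $s\leq -2$.
\end{itemize}
Since $n\geq 2$, these ranges cover every $s$. This is the paper's argument; it needs neither localisation of boundary-parallel arcs nor bypass rotation.
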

\begin{proof}
We denote the dividing curves on $A$ by $\Gamma_A$. Recall that $\gamma$ bounds a disk in $\Bout$, while $\gamma'$ in $\Bin$. Therefore, a bypass along a curve $c$ on $\gamma$ is a bypass from outside, while on $\gamma'$ is a bypass from inside.

To count the components of $c\cap (P'_1\cup P'_2)$, we will examine the number of intersection points of $\Gamma_A$ and $\gamma$ or $\gamma'$. In particular, we will show that for any possible $m$, $n$ and $s$, either
\begin{itemize}
	\item all subarcs $c$ of $\gamma$ which intersect $\Gamma_A$ in three points (that is, can be an attaching arc of a bypass) intersect $P'_1\cup P'_2$ in at most one component, or
	\item all subarcs $c$ of $\gamma'$ which intersect $\Gamma_A$ in three points (that is, can be an attaching arc of a bypass) intersect $P'_1\cup P'_2$ in at most one component.
\end{itemize}

Fix an orientation on $S$, $\gamma$ and $\gamma'$. Let $R_+\subset S\setminus\Gamma_S$ denote the region where $\alpha(v)>0$, and orient $\Gamma_A$ by the boundary orientation induced from $R_+$.
Call an intersection point of $\gamma$ (or $\gamma'$) and $\Gamma_A$ \emph{positive}/\emph{negative} if the intersecting arcs of $\gamma$ (or $\gamma'$) and $\Gamma_A$ form a positive/negative basis of $S$. A consecutive positive and negative intersection point together can form a canceling pair, see Figure \ref{fig:cancelingpair}.

\begin{figure}[h!]
	\centering
	\includegraphics[width=0.15\textwidth]{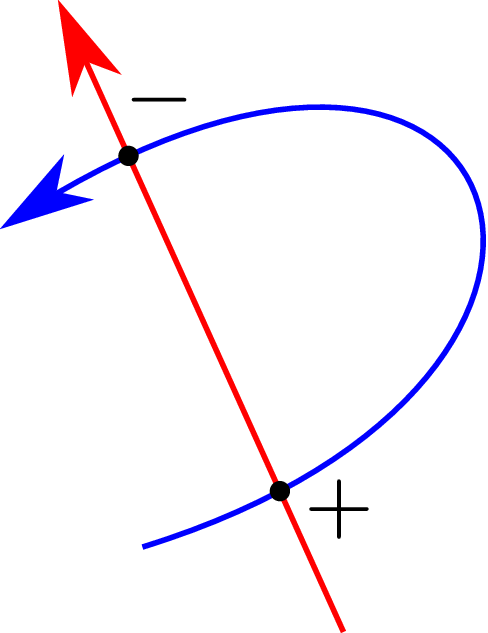}
	\caption{A canceling pair of a positive and a negative intersection point}
	\label{fig:cancelingpair}
\end{figure}
	
To work with the minimal number of intersections, we want to avoid canceling pairs in $\Gamma_A\cap \gamma$ and in $\Gamma_A\cap \gamma'$. We need to examine two cases. We will check that either

\begin{enumerate}[leftmargin=*, label=Case \arabic*:]
	\item the intersection points of $\Gamma_A$ and $\gamma$ are non-canceling and $|\Gamma_A\cap \gamma_i|\geq 2$ for $i=1,2,...2k$, or
	\item the intersection points of $\Gamma_A$ and $\gamma'$ are non-canceling and $|\Gamma_A\cap \gamma_i'|\geq 2$ for $i=1,2$.
\end{enumerate}

 These conditions suffice: an attaching arc meeting
 $P'_1\cup P'_2$ twice would have at least four
 intersections with $\Gamma_A$.
 Thus, in either case, $c\cap(P'_1\cup P'_2)$
 has at most one component.

First, let us check when Case 2 holds by studying $\Gamma_A\cap \gamma'$.
Recall, that the two parallel components $\gamma_1'$, $\gamma_2'$ of $\gamma'$ have slope $nm$, while the dividing curves of $\Gamma_A$ have slope $s$.

Since $\gamma'_1$ and $\gamma'_2$ are parallel, the intersections
of $\gamma'$ with the dividing set in $P$ are essential.
Hence, for $i=1,2$, the number of intersection points is
$|\Gamma_A\cap\gamma_i'|=|s-nm|$.

Next, we check whether for both $\gamma_1'$ and $\gamma_2'$ there are at least two intersection points on $A$, that is, if $|\Gamma_A\cap \gamma_i'|\geq 2$. As mentioned before, this would mean that $c\cap (P'_1\cup P'_2)$ has at most one component for any bypass curve $c$ along $\gamma'$.

A short analysis shows that $|\Gamma_A\cap \gamma_i'|=|s-nm|\geq 2$ with $n>1$ holds for all $|m|>2$ when $|s|\leq 3n-2$. So if $|m|>2$ our statement holds for a bypass along $\gamma'$, proving Lemma \ref{lem:legfeljebb1komponens} for the pairs $(n,s)$ shown on Figure \ref{fig:diagram_gammaprime}. If $m=-2$ then the condition $|\Gamma_A\cap \gamma_i'|=|s-nm|\geq 2$ with $n>1$ is equivalent to $|s+2n|\geq 2$ which is true for all $s\geq 0$, see Figure \ref{fig:diagram_gammaprime-2}.

\begin{figure}[h!]
	\centering
	\includegraphics[width=0.37\textwidth]{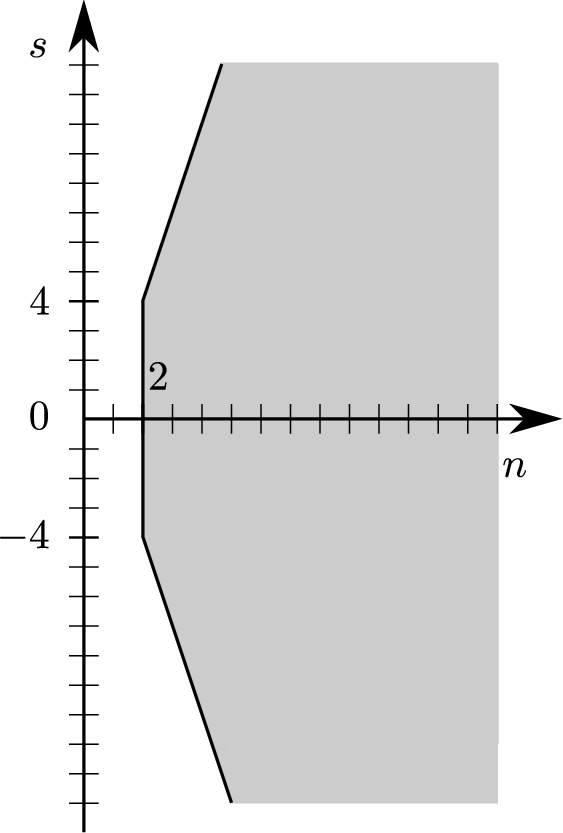}
	\caption{The $(n,s)$ pairs for which the conditions of Case 2, in particular $|\Gamma_A\cap \gamma_i'|=|s-nm|\geq 2$ with $i=1,2$ and $n>1$ hold for all $|m|>2$.}
	\label{fig:diagram_gammaprime}
\end{figure}

\begin{figure}[h!]
	\centering
	\includegraphics[width=0.37\textwidth]{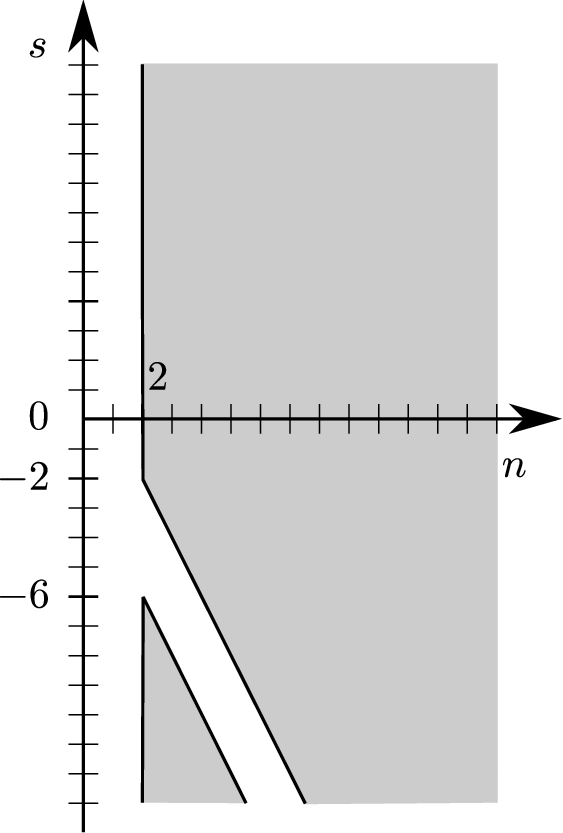}
	\caption{The $(n,s)$ pairs for which the conditions of Case 2 hold with $m=-2$. In the shaded region $|s+2n|\geq 2$ with $n>1$.}
	\label{fig:diagram_gammaprime-2}
\end{figure}

Next we study the bypasses along $\gamma$ to check when Case 1 holds.

We chose the arcs $\gamma_1, \gamma_2,...,\gamma_{2k}$ so that their slopes were $2, 0,...,0,n, n-2$. The corresponding slope differences are
$s-2$, $s,\ldots,s$, $s-n$, $s-n+2$.
If their signs agree, the intersections are essential:
a cancellation could only occur around an arc through one
of the $P'_i$, where the agreeing slope signs exclude it,
as in \cite[proof of Lemma 4.2]{ENV}. Moreover, we would like to see that whenever $|\Gamma_A\cap \gamma_i'|< 2$ for $i=1$ or $2$ (that is, the conditions of Case 2 do not hold), then each of $|\Gamma_A\cap \gamma_1|=|s-2|$, $|\Gamma_A\cap \gamma_2|=|s|$,...,$|\Gamma_A\cap \gamma_{2k-2}|=|s|$, $|\Gamma_A\cap \gamma_{2k-1}|=|s-n|$, $|\Gamma_A\cap \gamma_{2k}|=|s-n+2|$ is at least 2.

\begin{figure}[h!]
	\centering
	\includegraphics[width=0.37\textwidth]{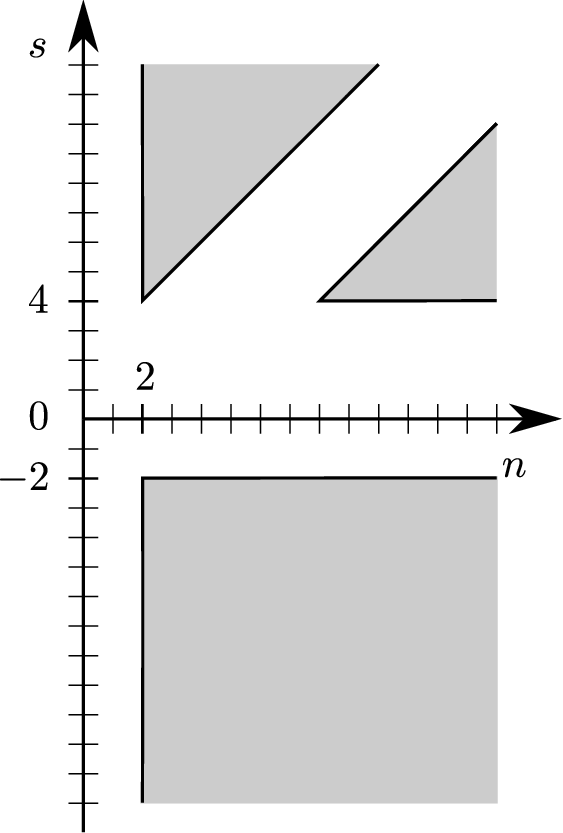}
	\caption{The pairs $(n,s)$ for which the conditions of Case 1 hold, thus $|\Gamma_A\cap \gamma_i|\geq 2$ for $i=1,2,...2k$.}
	\label{fig:diagram_gamma}
\end{figure}
The pairs $(n,s)$ for which every part of these two conditions hold are shown in Figure  \ref{fig:diagram_gamma}.

Finally, compare the $(n,s)$ pairs when we verified that Lemma \ref{lem:legfeljebb1komponens} holds for a bypass along $\gamma$ or $\gamma'$. We see that if $|m|>2$ the union of the pairs in Figure \ref{fig:diagram_gammaprime} and in Figure \ref{fig:diagram_gamma}, while if $m=-2$, the union of the pairs in Figure \ref{fig:diagram_gammaprime-2} and in Figure \ref{fig:diagram_gamma} cover all the possible $(n,s)$ pairs. This means that we proved Lemma \ref{lem:legfeljebb1komponens} in every case.
\end{proof}
\vspace{-0.5cm}

\begin{proof}[Proof of Lemma \ref{lem:Unknot}]
	Recall that we chose $\U$ to be a Legendrian knot in the topological class of $B$, which is smoothly isotopic to the unknot and disjoint from $\L$. Moreover, we assumed that $\tb(\U)=-n$ is maximal with some $n>0$. Assume that $n>1$. We only need to notice that Lemma \ref{lem:legfeljebb1komponens} ensures that the condition of Corollary \ref{Claim43Corollary} holds in every case. So we can destabilize $\U$, yielding a contradiction.
\end{proof}
We can use the above results to prove Theorem \ref{thm:normalizingGamma}.

\begin{proof}[Proof of Theorem \ref{thm:normalizingGamma}]
	According to Lemma \ref{lem:Unknot}, we can consider a Legendrian unknot $\U$ with maximal Thurston-Bennequin number, smoothly isotopic to $B$ of Figure \ref{fig:Unknot_B}. As in the proof of Lemma \ref{lem:Unknot}, using $\U$, define $S=A_1\cup D_1\cup D_2$, $P=P_1\cup P_2\cup A_1$, $A=A_1\cup A_{1,3}\cup A_{2,3}$ and $P'=P'_1\cup P'_2\cup A$, and consider the curve $\gamma$. Since $\tb(\U)=-1$, now $\Gamma_A$ intersects $\partial A$ in four points: two on $A_{1,3}$ and two on $A_{2,3}$. If we think of $S^1$ as the interval $[0,2]$ with identified endpoints, and $A=S^1\times [0,1]$, then we can assume that the intersection points of $A$ with $\Gamma_A$ are (0,0), (0,1), (1,0) and (1,1). Then the slope $s$ of the dividing curves on $A$ is an integer. Figure \ref{fig:slope01} shows the cases $s=0$ and $s=1$. Observe that any other case can be obtained from these by doing some number of Dehn twists on the sphere $S$ along a curve parallel to a component of $\partial A$.
	
	\begin{figure}[h!]
		\centering
		\includegraphics[width=0.75\textwidth]{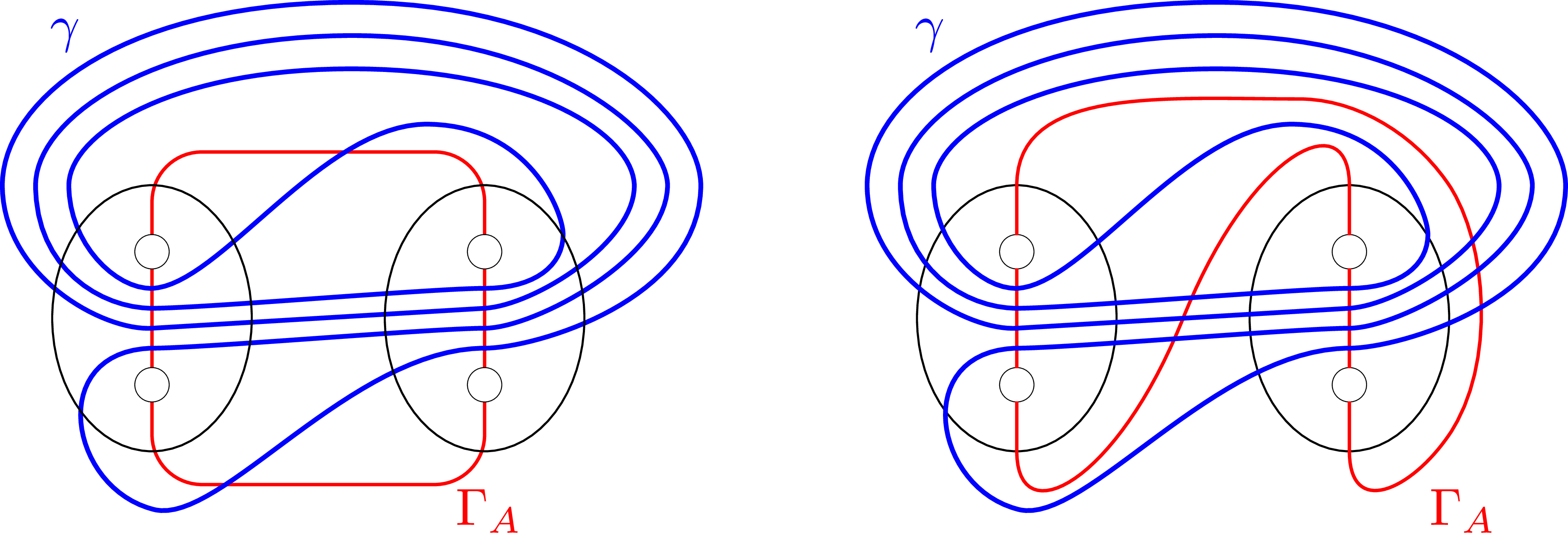}
		\caption{The cases when the slope of the red dividing curve on $A$ is $s=0$ (left) and $s=1$ (right). The thinner blue curve represents $\gamma$.}
		\label{fig:slope01}
	\end{figure}

Since $\gamma$ bounds a disk $\Dout$ in $\Bout$, we can always find a bypass for $P'$ on $\Dout$ along $\gamma$. We will prove that after some number of bypasses, $\Gamma_A$ can be made to have slope $s=1$ meaning that the dividing curves on $P'$ can be normalized as in Figure \ref{fig:dividings_on_sphere}. Our plan is to examine cases depending on the slope $s$ of $\Gamma_A$. In each case, we go through all the sub-arcs of $\gamma$ which intersect $\Gamma_A$ in three points, and check how attaching a bypass along that arc changes $s$.

\begin{figure}[h!]
	\centering
	\includegraphics[width=0.75\textwidth]{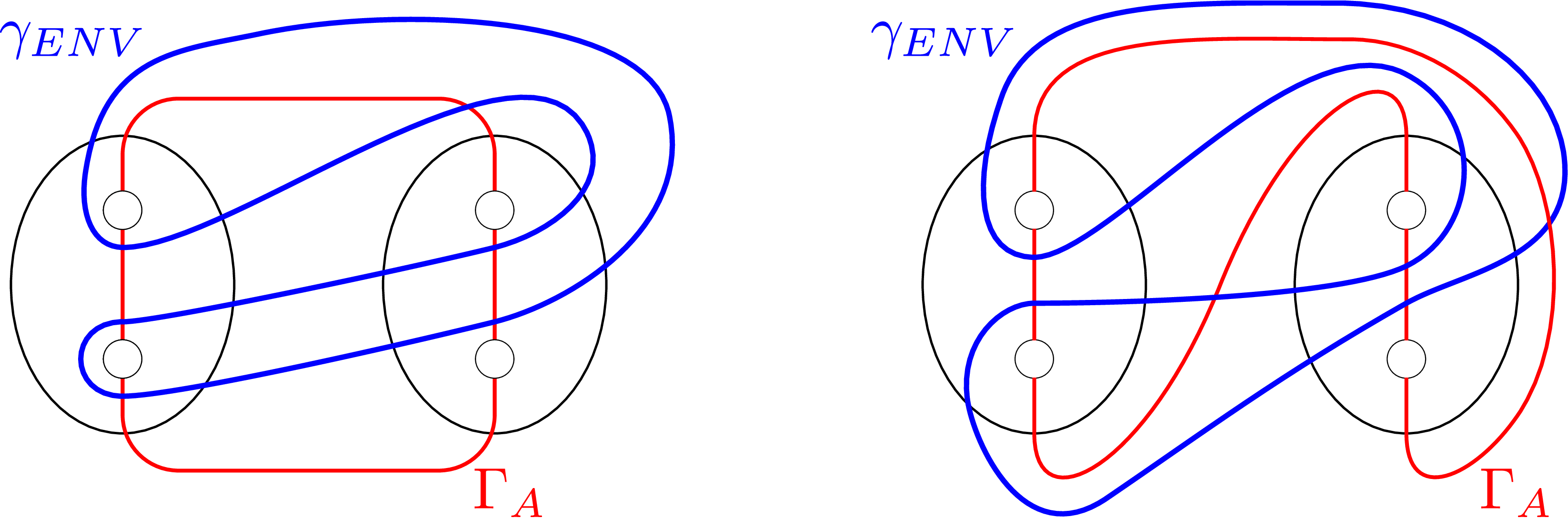}
	\caption{The curve $\gamma_{ENV}$ of \cite{ENV} and the dividing curve with slope $s=0$ (left) and $s=1$ (right). Observe how $\gamma_{ENV}$ differs from our $\gamma$ shown in Figure \ref{fig:slope01}.}
	\label{fig:slope01_Vera}
\end{figure}

In a very similar setup, Etnyre, Ng and Vértesi proved an analogous result, \mbox{\cite[Theorem 4.1.]{ENV}}. Our statement is a generalization of theirs. The only difference is that we use a different $\gamma$ curve yielding more possible bypass attachments to check (compare Figure \ref{fig:slope01} with Figure \ref{fig:slope01_Vera} showing their curve which we will denote by $\gamma_{ENV}$ with dividing slope $0$ and $1$). Therefore, we only need to check those cases which do not follow from \cite[Theorem 4.1.]{ENV}.

The positive and negative regions of $P'$ are disks. Therefore,
we compare attaching arcs up to isotopy in these regions, with
the intersection points moving along the same dividing arcs
without changing their order. Since an attaching arc contains
three consecutive intersection points of $\gamma$ with the
dividing set, the extra parallel passages for larger $k$ only
repeat the configurations considered below.

\begin{itemize}
	\item If $s\leq-1$, then Figure \ref{fig:s=-1_nalam_vs_ENV} illustrates our $\gamma$ curve together with the one used in \cite{ENV}. Although we have more intersection points of $\gamma$ and the dividing curve, we can see that the type of possible bypasses are all equivalent to the cases of \cite{ENV}. They showed that these bypasses are either disallowed, meaning that they result in disconnected dividing curves, or increase the slope by 1, allowing us to assume $s=0$.
	\begin{figure}[h!]
		\centering
		\includegraphics[width=0.75\textwidth]{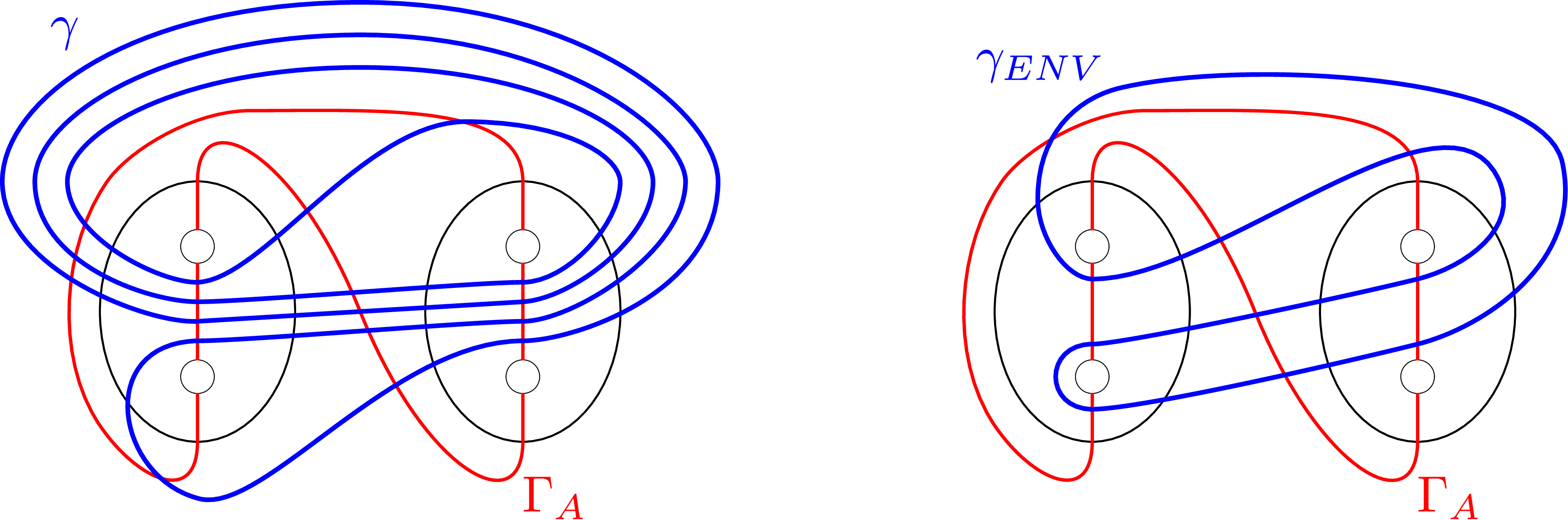}
		\caption{Our curve $\gamma$ on the left and $\gamma_{ENV}$ of \cite{ENV} on the right in case of $s= -1$. All the possible bypass attachments along $\gamma$ are equivalent to those \mbox{along $\gamma_{ENV}$}. If $s<-1$, no new type of bypass attaching curves occur.}
		\label{fig:s=-1_nalam_vs_ENV}
	\end{figure}	
	\item If $s=0$, we examine Figure \ref{fig:s=0_nalam_vs_ENV} to compare our situation with the one in \cite{ENV}. We can see that we have two new cases to consider, shown in Figure \ref{fig:s0newcases}. In both cases, after two bypass rotations and attaching
	the resulting bypass, we obtain a dividing curve with
	slope $1$, see Figure \ref{fig:rotacio}. According to \cite{ENV}, all the other cases are either disallowed or (after possibly applying bypass rotation) increase the slope by 1, allowing us to assume $s=1$.
		\begin{figure}[h!]
		\centering
		\includegraphics[width=0.75\textwidth]{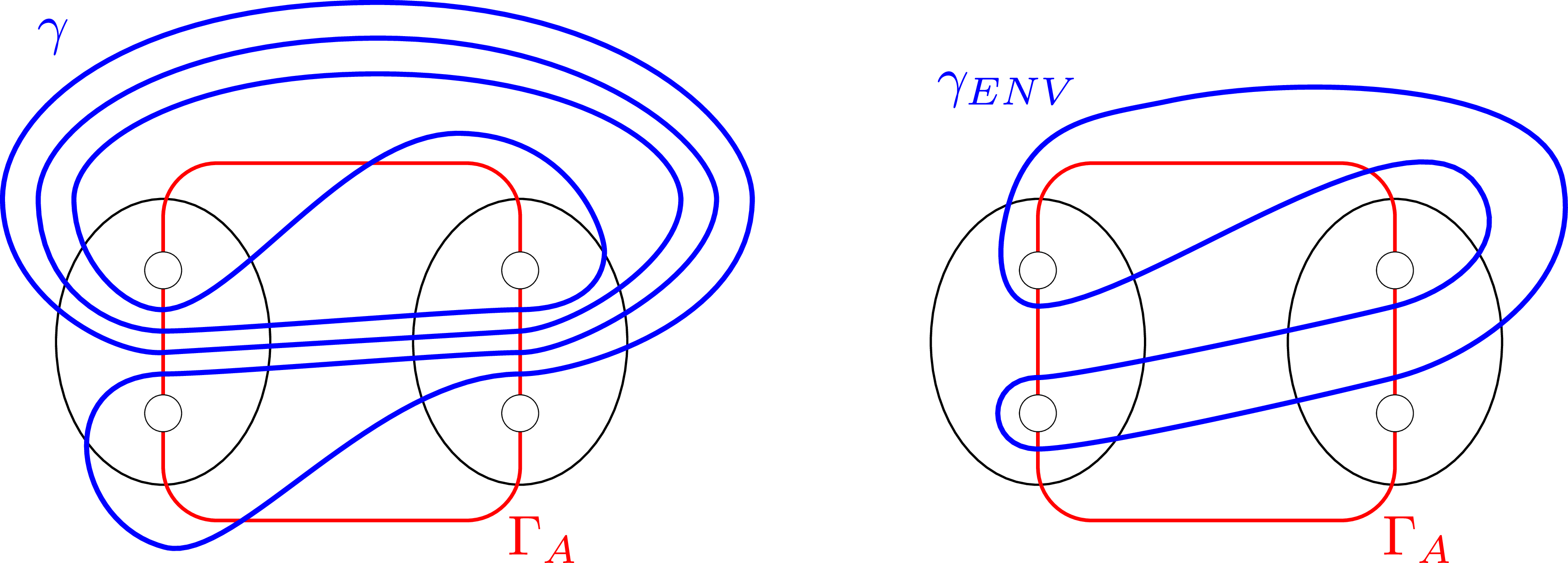}
		\caption{Our curve $\gamma$ on the left and $\gamma_{ENV}$ of \cite{ENV} on the right in case of $s=0$. There are two possible bypass attachments along $\gamma$ which did not occur \mbox{along $\gamma_{ENV}$}, see Figure \ref{fig:s0newcases}.}
		\label{fig:s=0_nalam_vs_ENV}
		\end{figure}
	\begin{figure}[h!]
		\centering
		\includegraphics[width=0.75\textwidth]{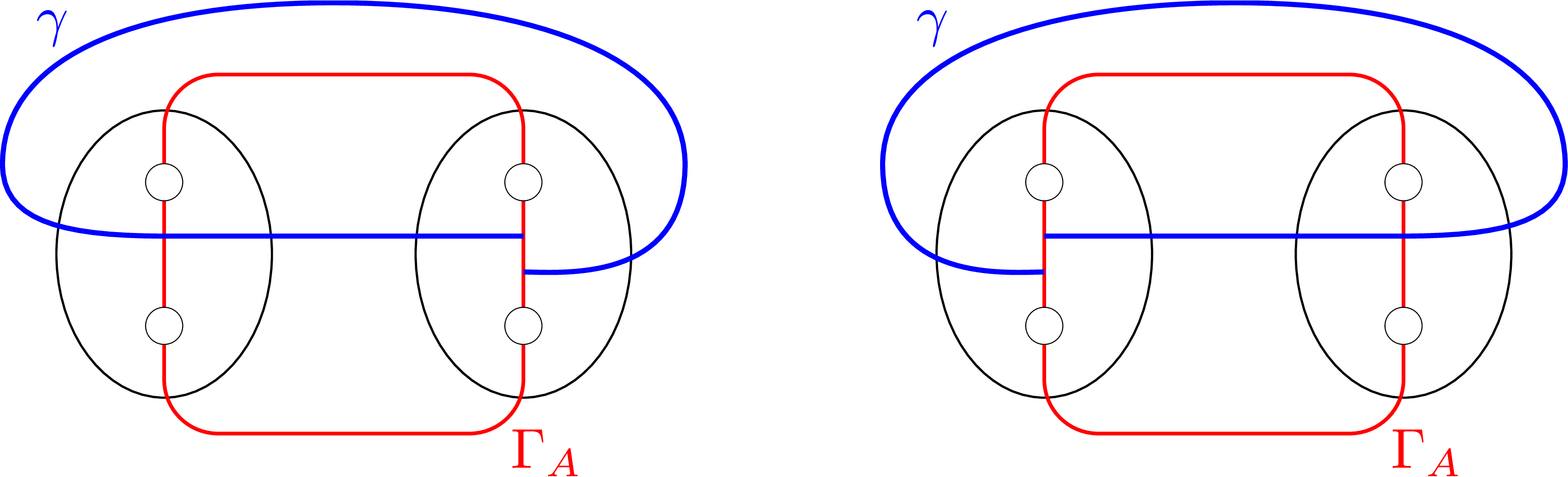}
		\caption{The two possible bypass attachments along $\gamma$ which did not occur along $\gamma_{ENV}$ in \cite{ENV} in the $s=0$ case.}
		\label{fig:s0newcases}
	\end{figure}
\begin{figure}[h!]
	\centering
	\includegraphics[width=1\textwidth]{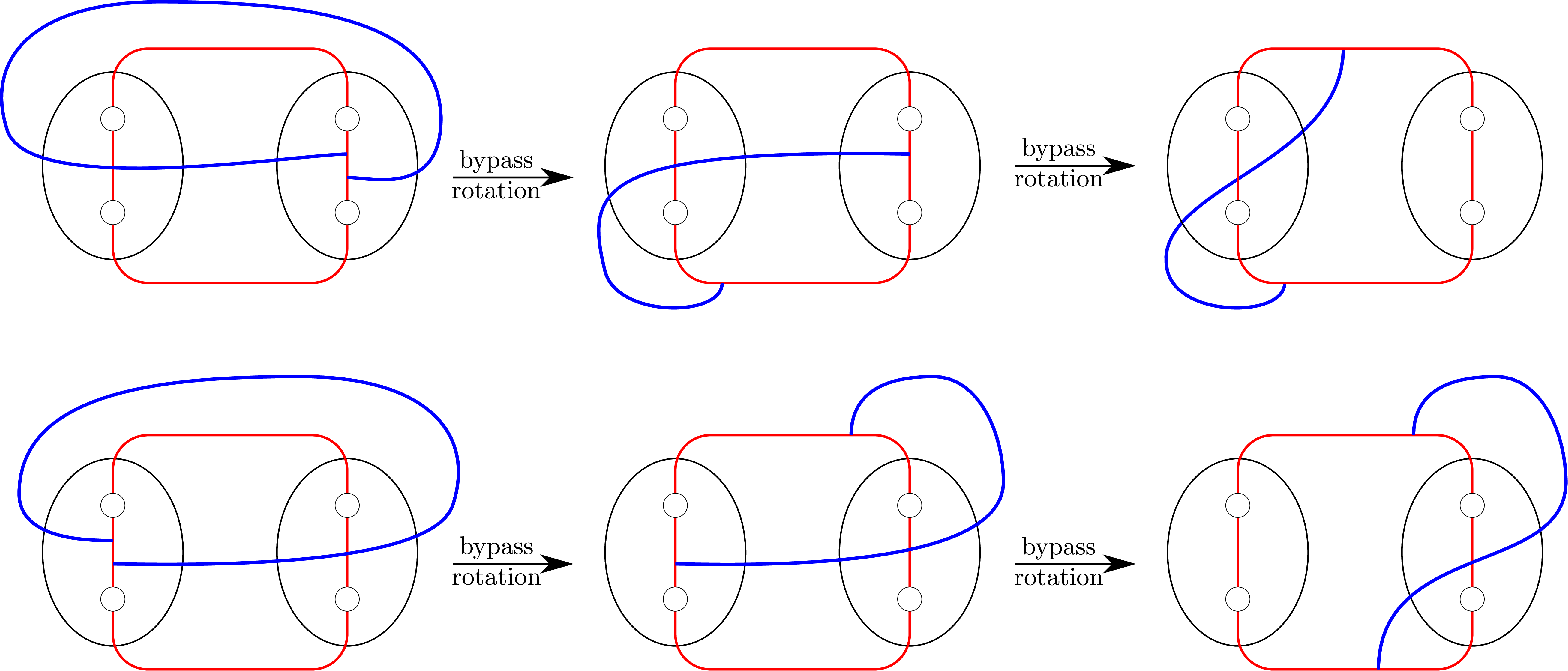}
	\caption{Handling the two cases which did not occur in \cite{ENV}. By applying two bypass rotations, the dividing curve will have slope 1.}
	\label{fig:rotacio}
\end{figure}
	\item $s=1$ means that the dividing curves are arranged as in Figure \ref{fig:dividings_on_sphere}.
\item If $s\geq2$, then Figure \ref{fig:s=2_nalam_vs_ENV}
shows that all the possible bypass attachments are of the same
types as in \cite{ENV}. The argument in
\cite[proof of Theorem 4.1]{ENV}, including their Figure 25,
again allows us to arrange $s=1$.
\end{itemize}
	\begin{figure}[h!]
	\centering
	\includegraphics[width=0.75\textwidth]{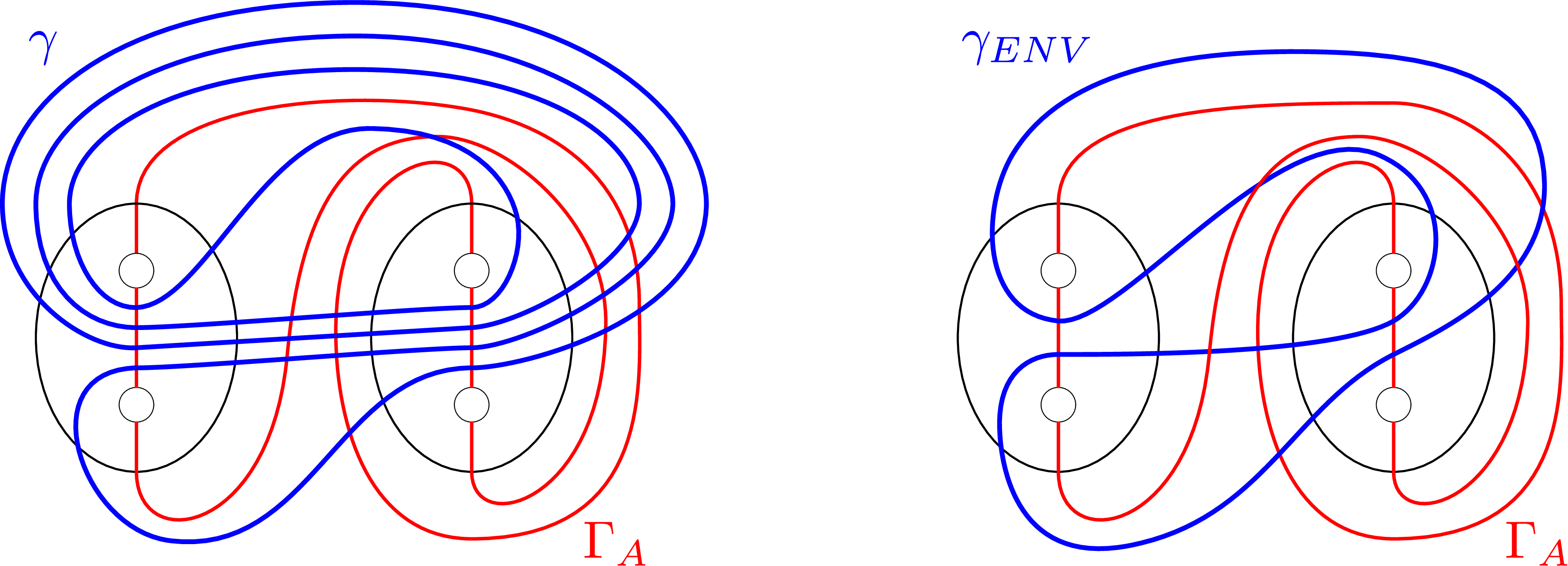}
	\caption{Our curve $\gamma$ on the left and $\gamma_{ENV}$ of \cite{ENV} on the right in case of $s= 2$. All the possible bypass attachments along $\gamma$ are equivalent to those along $\gamma_{ENV}$. \mbox{If $s>2$}, no new type of bypass attaching curves occur.}
	\label{fig:s=2_nalam_vs_ENV}
\end{figure}
Since all possible cases result in $s=1$, we see that the dividing curves on $P'$ can be normalized as in Figure \ref{fig:dividings_on_sphere}.

The last thing we need to check is that we can extend our proof from \mbox{$P'$ to $P$}. Note that they only differ in four small annular neighborhoods $A_{i,j}$ ($i,j=1,2$) of the boundary. Also, the dividing set on $P$ only differs from that of $P'$ by some number of twists on $A_{i,j}$. Recall that we defined $P_i$ as $D_i\setminus N$ for a standard contact neighborhood $N$ of $\L$. But we can consider a bigger tubular neighborhood $\widetilde{N}$ of $\L$ such that $\widetilde{N}$ contains $N$ and the four annuli $A_{i,j}$. Note that adding a small neighborhood of the annuli $A_{i,j}$ to $N$ does not affect the slope of the dividing curves on the tube. Therefore, we can choose $\widetilde{N}$ to be also a standard contact neighborhood of $\L$. Now taking $\widetilde{N}$ instead of $N$, the earlier surface $P'$ is $P$ in this new setup, and this modification does not change the slopes of the dividing curves. Therefore, it completes the proof of Theorem \ref{thm:normalizingGamma}.
\end{proof}

\subsection{Inside the sphere}\label{subsec:inside}
The classification result for our 2-braid $\K\cap \Bin$ is obtained from the following theorem of Etnyre and Vértesi \cite{Satellites}:

\begin{theorem}[Etnyre-Vértesi, \cite{Satellites}]
	Consider a convex disk $D$ with Legendrian boundary and
	a single dividing arc $\Gamma_D$, and equip $D\times I$,
	where $I=[-1,1]$, with an $I$-invariant tight contact
	structure inducing this dividing set on $D$.
	Fix $q$ points $p_1,\ldots,p_q$ on $\Gamma_D$.
	Then any Legendrian $q$-braid in $D\times I$, straight
	near the boundary and with endpoints
	$\{(p_i,-1),(p_i,1)\}_{i=1}^q$, is Legendrian isotopic
	to a concatenation of the basic building blocks of
	Figure \ref{fig:braid_building_blocks}.
\end{theorem}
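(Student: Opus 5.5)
The plan is to prove the braid classification by cutting $D\times I$ into thin slabs $D\times[t_j,t_{j+1}]$, each containing a single elementary piece of the braid, and then normalizing each piece to one of the basic building blocks of Figure \ref{fig:braid_building_blocks}.

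First, I would identify the model. The $I$-invariant tight structure on $D\times I$ with $\Gamma_D$ a single arc is contactomorphic to a standard neighborhood in $(\mathbb{R}^3,\xi_{\mathrm{std}})$, say $[-\varepsilon,\varepsilon]\times[-1,1]\times[-1,1]$. Here $I$ plays the role of the $x$-direction, $\Gamma_D$ sits at $y=0$, and the points $p_i$ lie at distinct heights $z_i$. This lets me work in the front projection to the $xz$-plane. A Legendrian $q$-braid that is straight near the boundary then becomes a front consisting of $q$ strands with no vertical tangencies except at cusps, entering at heights $z_i$ on the left and exiting at the same heights on the right.

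Second, I would put the braid in general position with respect to the $x$-coordinate. After a Legendrian isotopy rel boundary, which is a generic perturbation, crossings and cusps occur at distinct $x$-values. Cutting at intermediate $x$-values writes the front as a concatenation of slices, each containing exactly one crossing, one cusp, or none. Since the number of strands over a slice is $q$ at both ends and cusps change it by $\pm 2$, the cusps come in left/right pairs. I would then use Legendrian Reidemeister moves, which are realized by Legendrian isotopies supported in the interior, to slide each left cusp to the right until it meets its partner. The aim is for every left cusp to be paired with a right cusp so that together they form a zig-zag (a stabilization) on a single strand, or a clasp/twist pattern between adjacent strands. In either case the result matches the building blocks of Figure \ref{fig:braid_building_blocks}.

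The main obstacle is this pairing and sliding step. The difficulty is showing that every front of a braid, meaning a tangle that is topologically a braid, can be rearranged into such local blocks without leaving $D\times I$. I would try first an induction on the number of cusps. Take the leftmost left cusp and look at the two strands it creates. Because the tangle is topologically a braid, one of these strands must turn back at a right cusp before reaching $x=1$. Using Reidemeister II/III moves (and the Legendrian versions involving cusps), I would then push all crossings separating these two cusps out of the region between them. That isolates a zig-zag or a standard twist block, and removing it reduces the count, which completes the induction.

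If this combinatorial route stalls, the alternative is convex surface theory. Take the convex disks $D\times\{t\}$ and study how the braid meets the dividing arc. Bypass arguments such as Proposition \ref{prop:destab} would identify the stabilization blocks, and Theorem \ref{thm:B3unique} would identify what remains, after removing stabilizations, with a standard braid word.
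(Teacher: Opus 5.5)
The paper does not prove this statement. It quotes it from \cite{Satellites}, so your argument has to stand on its own.

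Your model is right. For $dz-y\,dx$ the field $\partial_x$ is a contact vector field, the slices $\{x=t\}$ are convex with dividing set $\{y=0\}$, and the contact rescaling $(x,y,z)\mapsto(x,cy,cz)$ keeps fronts inside the box. The slicing into elementary pieces is also fine.

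\textbf{The induction step is only asserted.} This step carries the whole content of the theorem. The one place where you invoke the braid hypothesis is vacuous. Along any arc running from $x=-1$ to $x=1$ the cusps alternate right, left, right, \dots, so every left cusp is joined by a leftward sheet to the preceding right cusp on its own strand. This holds just as well for a knotted tangle, e.g.\ a single strand containing a local Legendrian trefoil, for which the conclusion is false. So the hypothesis that the tangle is smoothly a braid would have to enter exactly in ``push all crossings separating these two cusps out of the region between them'', and you give no mechanism there. Moreover, simplifying a front may require swallowtail moves, which create cusps. So a monotone induction on the number of cusps is not available a priori.

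\textbf{Your target normal form is unattainable once $q\geq 3$.} You aim for zig-zags on single strands and twist blocks between two adjacent strands. Orient all strands from left to right. Then $\wr-\frac12 c$ is invariant under Legendrian Reidemeister moves rel endpoints. The signed number $e$ of crossings between distinct strands is a smooth invariant.

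Take one strand that runs along the top, turns at a right cusp, comes back down-left behind two rising strands, and leaves along the bottom after a left cusp. Both crossings are between oppositely directed sheets, so $e=-2$ and $\wr-\frac12 c=-3$. A concatenation of $a$ blocks $X$, $b$ blocks $S$ or $Z$, and $t$ stabilizations has $e=a-b$ and $\wr-\frac12 c=a-2b-t$. Matching these forces $b+t=1$ and $a=b-2<0$, which is impossible.

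Hence a cusp pair cannot in general be isolated around a single crossing. The blocks have to be bundle blocks $X(i,j)$, $S(i,j)$, $Z(i,j)$, in which $i$ parallel strands pass $j$ others. The notation $Z(1,1)$ in the corollary following the statement points the same way. A proof must produce these blocks. For $q=2$, the only case the paper uses, your target is the right one, but the step is still unproved.

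\textbf{The convex-surface fallback does not repair this as written.} The slices $D\times\{t\}$ are tangent to the braid at every cusp. A route that genuinely uses the braid hypothesis would proceed as follows. Take a family of disks transverse to the braid, and make them convex with the $q$ intersection points on their single dividing arc. Each bypass between consecutive disks rearranges the marked points along that arc, and you would identify each such rearrangement with a bundle block.
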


\begin{figure}[h!]
	\centering
	\includegraphics[width=0.9\textwidth]{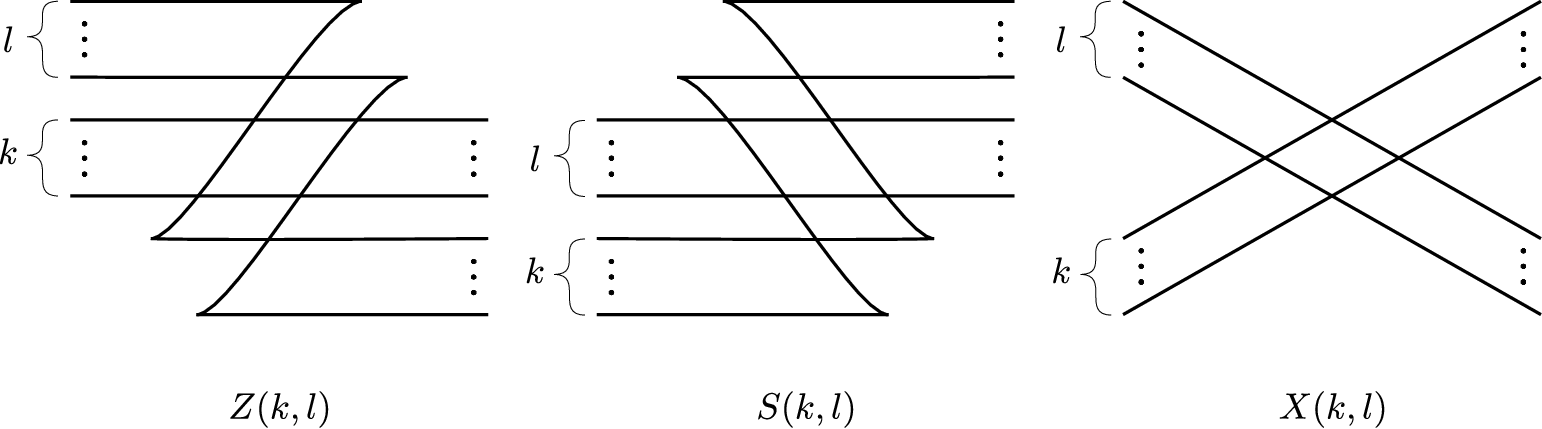}
	\caption{The three basic building blocks of Legendrian braids. Above or below the strands depicted here, there may be further horizontal ones.}
	\label{fig:braid_building_blocks}
\end{figure}
By Giroux's Flexibility Theorem and Theorem \ref{thm:B3unique},
we identify $\Bin$ with this standard contact $D\times I$,
matching the normalized boundary data and the endpoint germs.
The Legendrian isotopies below are relative to these germs.
\begin{corollary}
	A Legendrian 2-braid has three basic building blocks which we will denote by $Z=Z(1,1)$, $S=S(1,1)$ and $X=X(1,1)$.
\end{corollary}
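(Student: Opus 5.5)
This is the specialization $q=2$ of the Etnyre--V\'ertesi theorem just quoted. The plan is to apply that theorem and then identify which of the general building blocks of Figure \ref{fig:braid_building_blocks} can actually occur when only two strands are present.

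First we check that the hypotheses hold in our setting. By Theorem \ref{thm:normalizingGamma}, the sphere $S=\partial\Bin$ is convex with the normalized dividing set of Figure \ref{fig:dividings_on_sphere}. As stated just before the corollary, Giroux flexibility and Theorem \ref{thm:B3unique} identify $\Bin$ with the $I$-invariant tight $D\times I$, where $\Gamma_D$ is a single arc. The four points of $\L\cap S$ become $\{(p_1,\pm1),(p_2,\pm1)\}$ with $p_1,p_2\in\Gamma_D$. After a Legendrian isotopy supported near $\partial\Bin$, $\L\cap\Bin$ is a Legendrian $2$-braid that is straight near the boundary. The theorem therefore applies with $q=2$, and $\L\cap\Bin$ is Legendrian isotopic, relative to the boundary germs, to a concatenation of basic blocks.

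Second, we specialize the blocks. In the general $q$-strand picture, each block $Z(i,j)$, $S(i,j)$, $X(i,j)$ acts on strands in positions $i$ and $j$, or on a single strand $i$. Figure \ref{fig:braid_building_blocks} allows further horizontal strands above or below, and these are just carried along. For $q=2$ there are no spectator strands. Each block must also involve an index pair compatible with two positions, and by symmetry of the $D\times I$ model this reduces to the index $(1,1)$. So every block is one of $Z(1,1)$, $S(1,1)$, $X(1,1)$: the zig-zag (stabilization-type) block, the $S$-shaped exchange, and the crossing block.

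The only real obstacle is bookkeeping. We must confirm from the explicit front pictures that each block type collapses to exactly one two-strand block, so that no type disappears and no two become distinct. We will do this by drawing each block with $q=2$ in the front projection of the standard $D\times I$ and comparing. That justifies the notation $Z$, $S$, $X$ used from here on.
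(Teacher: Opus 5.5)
Your proposal follows the paper: the corollary is simply the case $q=2$ of the Etnyre--V\'ertesi theorem, applied after $\Bin$ is identified with the standard $I$-invariant $D\times I$, as the paper does just before the statement. One correction to your gloss: $Z$ is not a stabilization-type zig-zag but, like $S$, a cusped front of a single half twist (the rotation $\varrho$ interchanges $S$ and $Z$, and stabilizations only arise from concatenations such as $XZ$ or $XS$), and the index $(1,1)$ is forced simply because there are only two strands, not by a symmetry argument (reading the indices as strand positions, as you suggest, would give $(1,2)$ instead).
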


Observe in Figure \ref{fig:XZ_stab} that a block $X$ followed by a $Z$ or an $S$ (or vice versa) is a stabilization. Therefore, the following proposition is immediate.

\begin{figure}[h!]
	\centering
	\includegraphics[width=0.5\textwidth]{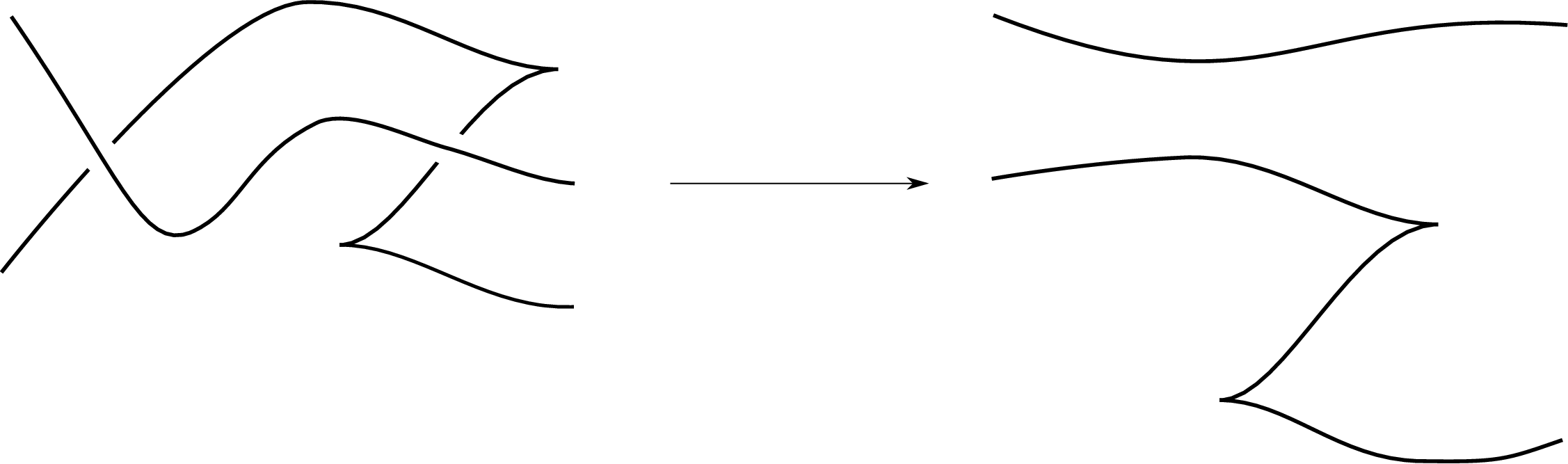}
	\caption{A block $X$ followed by a $Z$ destabilizes.}
	\label{fig:XZ_stab}
\end{figure}

\begin{proposition}\label{prop:2-braid_XSZ}
	For a 2-braid with $|m|$ half-twists,
	\begin{itemize}
		\item if $m\geq 0$ then a Legendrian realization of the braid either destabilizes or consists of $m$ blocks of type $X$;
		\item if $m<0$ then a Legendrian realization of the braid either destabilizes or consists of $|m|$ blocks of type $S$ and $Z$ in any order.
	\end{itemize}
\end{proposition}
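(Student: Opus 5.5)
By the Etnyre--Vértesi theorem and its corollary, any Legendrian realization of the $2$-braid $\K\cap\Bin$ is Legendrian isotopic, relative to the boundary germs, to a word in the letters $X$, $S$, $Z$. I will read off the smooth type of each letter and then use Figure \ref{fig:XZ_stab} to remove mixed words.

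The first step is to record the smooth contribution of each block. An $X$ block is a single crossing of the two strands, so it contributes one half-twist of one sign; I will take this to be the positive sign in the convention where $K(k,m)$ has $m$ half-twists in the box. The blocks $S$ and $Z$ contain a crossing together with a pair of cusps. Comparing with the fronts in Figure \ref{fig:max_tbk_ruling}, where for $m<-2$ each crossing in the box carries two cusps, each of $S$ and $Z$ should contribute one half-twist of the opposite sign, $-1$. This sign bookkeeping is the one point that has to be checked against the pictures, and it is the main place where the argument could go wrong. A word with $a$ letters $X$ and $b$ letters from $\{S,Z\}$ is then smoothly the braid with $a-b$ half-twists, since these braids are determined by their signed half-twist count. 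So $a-b=m$.

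The second step is the dichotomy. If the word contains both an $X$ and an $S$ or $Z$, then somewhere an $X$ is adjacent to an $S$ or a $Z$. By Figure \ref{fig:XZ_stab} and its mirror versions for $S$ and for the reversed order, that adjacent pair is a stabilization, so the realization destabilizes. Otherwise the word is pure, and there are two cases:
\begin{itemize}
\item If it consists only of $X$'s, then $b=0$ and $a=m$, which requires $m\ge 0$.
\item If it consists only of $S$'s and $Z$'s, then $a=0$ and $b=-m$, which requires $m\le 0$. These letters can occur in any order, since no pair of them destabilizes.
\end{itemize}

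Hence for $m\ge0$ a non-destabilizing realization is $X^m$; an all-$S/Z$ word would need $b=-m\le 0$, so for $m>0$ it is impossible. For $m<0$ a non-destabilizing realization is a word of length $|m|$ in $S$ and $Z$. For $m=0$ both descriptions give the empty word. This is exactly the claim of the proposition.
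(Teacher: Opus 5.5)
Your argument is correct and is essentially the paper's. The paper also reduces to words in $X$, $S$, $Z$ via the Etnyre--V\'ertesi theorem and uses Figure \ref{fig:XZ_stab} (an $X$ adjacent to an $S$ or $Z$ is a stabilization) to force pure words, calling the result immediate; you additionally make explicit the signed half-twist count $a-b=m$, which fixes the length and type of the pure word. One small point: your remark that the $S$/$Z$ letters may appear in any order ``since no pair of them destabilizes'' is neither justified nor needed, because the statement only says which letters can occur, not which words are non-destabilizable.
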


Since $\K\cap \Bin$ is a Legendrian 2-braid with $m$ half-twists, now we can give an upper bound on the number of its non-destabilizable Legendrian realizations.


\begin{theorem}\label{thm:belul}
	The number of non-destabilizable Legendrian realizations of \mbox{$\K\cap \Bin$ is}
	\begin{itemize}
		\item exactly 1, if $m\geq 0$, and
		\item at most $\lfloor\frac{(|m|+2)^2}{4}\rfloor$ if $m<0$ .
	\end{itemize}
\end{theorem}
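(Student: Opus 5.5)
The plan is to reduce Theorem \ref{thm:belul} to a combinatorial count of words in the building blocks, using Proposition \ref{prop:2-braid_XSZ} as the starting point. By that proposition, every non-destabilizable Legendrian realization of $\K\cap\Bin$ is Legendrian isotopic, relative to the endpoint germs and inside the standard $D\times I$ identified with $\Bin$, to a concatenation of building blocks. There are two cases.
\begin{itemize}
\item If $m\geq 0$, the only possible word is $X^m$. This gives at most one class.
\item If $m<0$, the word is some sequence of $|m|$ letters from $\{S,Z\}$, giving a priori $2^{|m|}$ candidates.
\end{itemize}

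For $m\geq 0$ I also need the claim to be ``exactly 1'', so I must show that $X^m$ itself does not destabilize. I would do this by capping off the braid with the outer tangle to obtain one of the fronts of Figure \ref{fig:max_tbk_ruling}, which has a normal ruling. By Proposition \ref{prop:ruling_maxtb} that front has maximal $\tb$, so $X^m$ cannot destabilize inside $\Bin$. Existence of the realization is clear from the same picture.

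For $m<0$, write $n=|m|$. The key step is to cut the $2^n$ words down to at most $\lfloor (n+2)^2/4\rfloor$ Legendrian isotopy classes. I first note the identity
$$\Bigl\lfloor \tfrac{(n+2)^2}{4}\Bigr\rfloor=\sum_{j=0}^{n}\bigl(\min(j,n-j)+1\bigr).$$
This suggests indexing classes by two quantities: $j$, the number of $S$ letters, and $r$, the number of occurrences of the subword $ZS$ (the number of descents). The value $j$ is forced to be an invariant, since it is essentially the rotation contribution of the braid, and $r$ ranges over $0,\dots,\min(j,n-j)$. The aim is therefore the following normal form: every word with $j$ letters $S$ and $r$ descents is Legendrian isotopic, rel boundary, to
$$S^{\,j-r}(ZS)^{r}Z^{\,n-j-r}.$$

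To prove the normal form, I would look for local Legendrian isotopies of short subwords in $D\times I$ that preserve both $j$ and $r$. The first candidate is a move that slides a letter from one run to an adjacent run of the same letter through an alternating segment, for example $SZS\,S \sim S\,SZS$, or moving an entire run of $Z$'s to the right past an $S Z$ pair. I would realize such a move by an explicit front isotopy, passing a cusp of one block through the crossing of the neighbouring block (Reidemeister II/III type moves in the front). The endpoint germs of each block lie on the single dividing arc $\Gamma_D$, so these moves stay in the $I$-invariant neighbourhood and are relative to the boundary. Iterating the moves pushes all non-alternating letters to the two ends, which yields the normal form above. Counting normal forms then gives the stated bound.

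The main obstacle is finding the correct local relations: which rearrangements of $S$ and $Z$ are genuinely Legendrian isotopic rel endpoints, as opposed to being only smoothly isotopic. The first thing I would try is to draw $SZS$ and $SSZ$ (and their mirror images) as fronts and look directly for a sequence of Legendrian Reidemeister moves between them. If that fails, I would fall back on a convex-disk argument: take the convex disk in $D\times I$ separating two consecutive blocks, and use bypass rotation (Proposition \ref{prop:bypass_rotation}) to interchange the blocks.
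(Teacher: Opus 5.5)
The gap is in the case $m<0$: you never produce a local Legendrian relation between words, and the relations you set out to find are not the right ones. The rest is fine. That includes your reduction to words via Proposition~\ref{prop:2-braid_XSZ} and the identity $\lfloor (n+2)^2/4\rfloor=\sum_{j=0}^{n}(\min(j,n-j)+1)$. It also includes your handling of $m\geq 0$, which is slightly more careful than the paper's, since the paper simply appeals to that proposition.

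The paper's key input is the explicit front isotopy $SSZ\simeq ZSS$ of Figure~\ref{fig:SSZ=ZSS}, together with its image $SZZ\simeq ZZS$ under the $180^\circ$ rotation about the $z$-axis. In any three consecutive blocks, these two relations exchange the first and third letters, whatever the middle letter is. So the blocks in odd positions can be permuted arbitrarily among themselves, and likewise those in even positions. Hence a word is determined up to Legendrian isotopy by the numbers $a$ and $b$ of $S$'s in odd and in even positions. This gives at most $(\lceil n/2\rceil+1)(\lfloor n/2\rfloor+1)=\lfloor (n+2)^2/4\rfloor$ classes directly, with no convex surface or bypass rotation argument.

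This genuine isotopy does not preserve your descent number: $SSZ$ has $r=0$ and $ZSS$ has $r=1$. So $r$ is not an invariant, and your normal form sends words to the wrong representatives.
\begin{itemize}
\item For $n=3$, the words $ZZS$ and $ZSZ$ both have $(j,r)=(1,1)$, so you would need $ZZS\simeq ZSZ$. Since $ZZS\simeq SZZ$, this amounts to $SZZ\simeq ZSZ$, which moves an $S$ from an odd to an even position and changes $(a,b)$ from $(1,0)$ to $(0,1)$.
\item Your first candidate move $SZSS\sim SSZS$ likewise changes $(a,b)$ from $(2,1)$ to $(1,2)$.
\end{itemize}
No parity-changing isotopy of this kind comes out of the building-block calculus, and you should not expect one. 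Combined with $SSZ\simeq ZSS$, such moves would leave only four classes for $n=3$, and for larger $n$ they would merge most classes with the same $j$. That is hard to reconcile with the quadratic number of maximal-$\tb$ twist knots in \cite{ENV}, which are assembled from exactly such inner words.

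Your reason for the invariance of $j$ is also wrong. With the orientation induced from the knot, the two strands of the inner braid are antiparallel. An $S$ block and a $Z$ block in the same position then contribute the same rotation. For an upper bound no invariance is needed anyway.

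The plan is easy to repair, because your normal forms happen to contain exactly one word for each admissible pair $(a,b)$:
\begin{itemize}
\item $S^{2t}(ZS)^{r}Z^{n-2t-2r}$ has $(a,b)=(t,t+r)$;
\item $S^{2t+1}(ZS)^{r}Z^{n-2t-2r-1}$ has $(a,b)=(t+r+1,t)$.
\end{itemize}
If you index by $(a,b)$ instead of $(j,r)$ and use the distance-two exchanges above, every word reduces to one of your normal forms, and your count goes through.
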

\begin{proof}
	The $m\geq0$ case follows from Proposition \ref{prop:2-braid_XSZ}.
	
	For $m<0$, Proposition \ref{prop:2-braid_XSZ} implies $2^{|m|}$ possible realizations. Moreover, \mbox{Figure \ref{fig:SSZ=ZSS}} shows that if we have three consecutive basic building blocks of type $SSZ$, it is Legendrian isotopic to the consecutive building blocks $ZSS$. 
	
	\begin{figure}[h!]
		\centering		\includegraphics[width=0.95\textwidth]{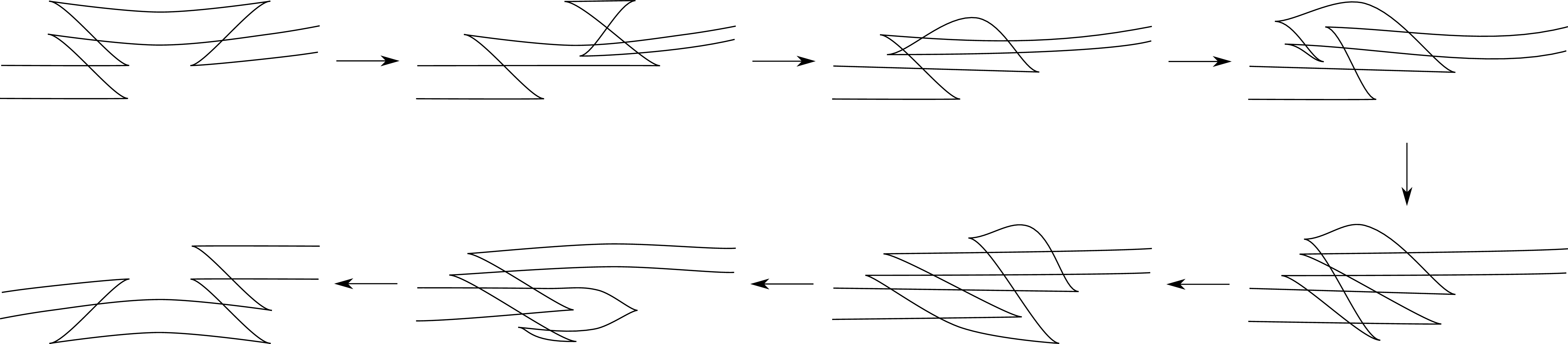}
		\caption{$SSZ$ is Legendrian isotopic to $ZSS$}
		\label{fig:SSZ=ZSS}
	\end{figure}

	Rotating the figure by 180° about the $z$ axis, we can observe that $SZZ$ is \mbox{Legendrian} isotopic to $ZZS$. Putting these together, if we number the crossings of $\K\cap \Bin$ from 1 to $|m|$, then we can swap any two building blocks with indices of the same parity. Thus the number of $S$'s on the even and on the odd indices determine the possible number of realizations. The number of $S$'s on the even indices is in $\{0,...,\lfloor \frac{|m|}{2}\rfloor\}$, while on the odd indices in $\{0,...,\lceil\frac{|m|}{2}\rceil\}$, which means we have $\left(\lfloor \frac{|m|}{2}\rfloor+1\right)\cdot \left(\lceil\frac{|m|}{2}\rceil+1\right)=\lfloor\frac{(|m|+2)^2}{4}\rfloor$ cases.
\end{proof}

\subsection{Outside the sphere }\label{subsec:outside}
 
The aim of the present section is to bound the possible tight
	contact structures on the complement of a neighborhood of
	$\K\cap\Bout$ in $\Bout$ which can arise from maximal $\tb$
	Legendrian realizations of $K(4,m)$.
	
	The dividing set on the four-punctured sphere is fixed by
	Theorem \ref{thm:normalizingGamma}. The maximal $\tb$ condition
	also restricts the dividing set on the boundary of a standard
	neighborhood of the knot. We use these boundary conditions
	to bound the possible tight contact structures on the outer
	complement. In the proof of the main theorem, these outer
	pieces will be glued to the corresponding inner pieces,
	and Proposition \ref{prop:elegctctotszamolni} will then be
	applied to the resulting closed Legendrian knots.

The following proposition is proved in \cite{legsur} as Theorem 2.13.

\begin{proposition}\cite{legsur}\label{prop:elegctctotszamolni}
In $(S^3, \xi_{std})$ two unoriented Legendrian knots are Legendrian isotopic if and only if the complements of
their standard contact neighborhoods are contactomorphic.
\end{proposition}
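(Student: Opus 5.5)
One direction is immediate: a Legendrian isotopy of $\L_0$ to $\L_1$ in $(S^3,\xi_{\mathrm{std}})$ extends, by the Legendrian isotopy extension theorem, to a contact isotopy $\phi_t$ of $S^3$. We may take $\phi_1$ to carry a standard neighborhood $N(\L_0)$ onto a standard neighborhood $N(\L_1)$, using uniqueness of standard neighborhoods (Theorem \ref{thm:standard_neighborhood}) after shrinking. Restricting $\phi_1$ then gives a contactomorphism of the complements.

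For the converse, suppose $f\colon S^3\setminus N(\L_0)\to S^3\setminus N(\L_1)$ is a contactomorphism. We will extend $f$ over the solid tori and then isotope the extension to the identity through contactomorphisms.

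\emph{Extension.} The tori $\partial N(\L_i)$ are convex in standard form with two dividing curves. Since $f$ is a contactomorphism, it carries the dividing set of $\partial N(\L_0)$ to that of $\partial N(\L_1)$. We must check that $f$ also sends the meridian $\mu_0$ to $\pm\mu_1$. Homologically this holds because both complements are knot complements in $S^3$, and the meridian is characterized as the unique slope whose Dehn filling gives $S^3$ (Gordon--Luecke). Alternatively, since $\tb$ and the knot type are read off the complement, it suffices to note that the meridian is the kernel of $H_1(\partial N)\to H_1$ of the complement is not enough, so we invoke Gordon--Luecke. Given this, the standard neighborhood model shows that $f|_{\partial N}$ extends to a contactomorphism $N(\L_0)\to N(\L_1)$ sending core to core. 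This uses uniqueness of tight structures on a solid torus with boundary dividing slope $1/n$ and meridional disks; equivalently, the tight structure on $S^1\times D^2$ with two longitudinal-type dividing curves is unique (Kanda). We obtain a contactomorphism $F$ of $(S^3,\xi_{\mathrm{std}})$ with $F(\L_0)=\L_1$, possibly reversing orientation of the knot, which is why the statement is for unoriented knots.

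\emph{Isotoping $F$ to the identity.} This is the main step. First, $F$ preserves orientation of $S^3$ because it preserves the cooriented contact structure, up to possibly composing with a coorientation issue, which does not arise for a positive structure. By Cerf's theorem $\pi_0\mathrm{Diff}^+(S^3)=0$, so $F$ is smoothly isotopic to the identity. Next we need that the space of contactomorphisms of $(S^3,\xi_{\mathrm{std}})$ is connected. This follows from Eliashberg's classification: the contactomorphism group of $(S^3,\xi_{\mathrm{std}})$ is homotopy equivalent to $U(2)$, in particular connected. Concretely, we fix a point and a Darboux ball, reduce via Theorem \ref{thm:B3unique} applied with a family parameter, and use Gray stability to convert a path of structures into an isotopy. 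A contact isotopy $F_t$ from the identity to $F$ then gives the Legendrian isotopy $F_t(\L_0)$ from $\L_0$ to $\L_1$.

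I expect the main obstacle to be the meridian-preservation in the extension step together with the connectedness of the contactomorphism group. I would cite Gordon--Luecke for the former and Eliashberg's $U(2)$ result for the latter rather than reprove them, as in \cite{legsur}.
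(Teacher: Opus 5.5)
Your route is the standard argument behind the result the paper quotes from \cite{legsur}; the paper gives no proof of its own. You extend the contactomorphism of the complements over the standard neighborhoods to a contactomorphism $F$ of $(S^3,\xi_{\mathrm{std}})$, then use Cerf together with Eliashberg's theorem that the contactomorphism group is connected. As written, though, the extension step has two holes.

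\emph{The unknot.} Gordon--Luecke characterizes the meridian as the only slope with an $S^3$ filling only for \emph{nontrivial} knots. For a Legendrian unknot the complement is a solid torus, every slope $\mu+k\lambda$ fills to $S^3$, and Dehn twists along its meridian disk send $\mu$ to $\mu+k\lambda$. So your claim fails in a case the statement covers. Here you must use the contact data:
\begin{itemize}
\item $f_*\lambda_0=\epsilon\lambda_1$, because the Seifert longitude (not the meridian, as your garbled aside says) spans the kernel of $H_1(\partial N)\to H_1$ of the complement;
\item $f_*(\lambda_0+\tb(\L_0)\mu_0)=\delta(\lambda_1+\tb(\L_1)\mu_1)$, because dividing sets go to dividing sets;
\item $f|_{\partial}$ preserves orientation.
\end{itemize}
Write $f_*\mu_0=a\mu_1+b\lambda_1$. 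Orientation gives $a=\epsilon$. Comparing coefficients gives $\epsilon\,\tb(\L_0)=\delta\,\tb(\L_1)$ and $\tb(\L_0)\,b=\delta-\epsilon$. Since unknots have negative $\tb$, this forces $\delta=\epsilon$, hence $b=0$ and $f(\mu_0)=\pm\mu_1$.

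\emph{Core to core.} Kanda's uniqueness of the tight solid torus rel boundary gives a contactomorphism $N(\L_0)\to N(\L_1)$ extending $f|_{\partial}$, but not one that sends core to core. It only tells you that $F(\L_0)$ is some Legendrian knot in $N(\L_1)$ for which $N(\L_1)$ is again a standard neighborhood. You still need the separate standard lemma that a standard neighborhood determines its Legendrian core up to Legendrian isotopy. That is, such a knot is Legendrian isotopic to $\L_1$ inside $N(\L_1)$. This is not ``equivalent'' to uniqueness of the contact structure on the solid torus.

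\emph{Coorientation.} Your remark that a coorientation issue ``does not arise for a positive structure'' conflates two things. Positivity makes every contactomorphism orientation preserving, since $f^*\alpha=g\alpha$ gives $f^*(\alpha\wedge d\alpha)=g^2\,\alpha\wedge d\alpha$. It does not exclude coorientation-reversing contactomorphisms, and these exist for $\xi_{\mathrm{std}}$: complex conjugation on $S^3\subset\mathbb{C}^2$ pulls $\sum_j(x_j\,dy_j-y_j\,dx_j)$ back to its negative. Such a map can never be joined to the identity through contactomorphisms. Eliashberg's connectedness statement concerns the coorientation-preserving group. So the converse must be read with coorientation-preserving contactomorphisms of the complements, which is the usual convention. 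Then $F$ preserves coorientation and your final isotopy step goes through.

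With these three repairs the argument proves the proposition.
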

	
For oriented knots, the same conclusion holds if the
contactomorphism preserves the oriented Seifert longitude.

Let $\L$ be a maximal $\tb$ Legendrian realization of $K(k,m)$
and let $N$ be a standard contact neighborhood of $\L$.
The value $\tb(\L)$ restricts the dividing set on
$\partial(S^3\setminus(\Bin\cup N))$.
We will use these restrictions to bound the possible tight
contact structures on $S^3\setminus(\Bin\cup N)$ arising
from maximal $\tb$ realizations.

We use the fixed smooth model to keep the boundary markings
and the gluing along $P=S\setminus N$ fixed.
By Giroux's Flexibility Theorem, for each dividing set under
consideration we fix compatible characteristic foliations
on the relevant convex surfaces, with a common contact germ
near $P$. Contact structures are considered up to isotopy
fixed near the boundary.

		\begin{figure}[!h]
		\centering
		\includegraphics[width=0.45\textwidth]{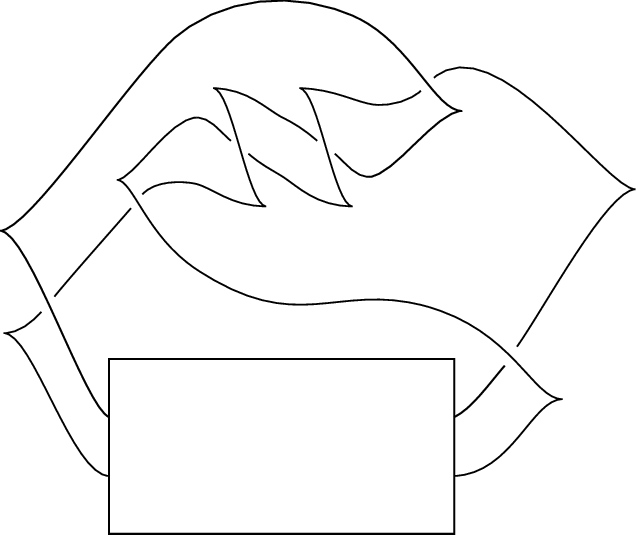}
		\caption{The outer part of a maximal $\tb$ Legendrian
			realization of $K(4,m)$.}
		\label{fig:one_ruling}
	\end{figure}

	Before stating the outer bound, we verify a lemma which will be used in the proof of the upper bound.
	
	\begin{lemma}\label{lem:outside_max_tb}
		The Legendrian realization in Figure \ref{fig:one_ruling} can be obtained as the intersection of $\Bout$ with a maximal tb Legendrian realization of $K(4,m)$.
	\end{lemma}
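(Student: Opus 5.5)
The plan is to exhibit an explicit front diagram of $K(4,m)$ whose restriction to $\Bout$ is the tangle of Figure \ref{fig:one_ruling}, and then check that this front realizes $\tb_{\max}$. The maximality check will go through Proposition \ref{prop:ruling_maxtb} (a normal ruling) or, alternatively, a direct comparison with the values in Proposition \ref{thm:max_tb}.

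First, I will isotope the sphere $S$ in the front projection so that it cuts the diagram into two parts. The outer part is a vertical 2-braid with four half-twists, drawn exactly as in Figure \ref{fig:one_ruling}. The inner part is a horizontal 2-braid with $m$ half-twists.

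Second, I will fill $\Bin$ with a non-destabilizable Legendrian 2-braid, following Proposition \ref{prop:2-braid_XSZ}. For $m\geq 0$ this is $m$ blocks of type $X$. For $m<0$ it is $|m|$ blocks of type $S$ and $Z$, and I will take a single fixed choice, say all blocks of type $Z$, possibly alternated in the pattern used in Figure \ref{fig:max_tbk_ruling}. Gluing along the four endpoints gives a closed front. By construction it represents $K(4,m)$, since the inner and outer tangles are smoothly the two twist regions.

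Third, I will verify that this closed front is of maximal $\tb$. The cleanest route is to compare it with the fronts of Figure \ref{fig:max_tbk_ruling}. I expect that, after Legendrian Reidemeister moves supported in $\Bin$, those fronts already have outer part equal to Figure \ref{fig:one_ruling}. If so, I simply observe that the normal rulings exhibited there remain normal rulings of the assembled front. Otherwise, I will compute $\wr-\tfrac12 c$ directly with Proposition \ref{prop:classical_from_front} and match it case by case (parity and sign of $m$) against Proposition \ref{thm:max_tb}. In that count, the outer part contributes a fixed amount. The inner braid contributes $-m$ for $X$-blocks, and $0$ per crossing for $S$/$Z$ blocks when $m<0$, each crossing then carrying two cusps and a positive sign.

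The main obstacle is the parity-dependent orientation of the strands. When $m$ is odd, the orientations in the outer tangle change, and so do the signs of the four outer crossings. This is exactly the source of the different constants ($-1-m$ versus $-9-m$, and $1$ versus $-7$). The small cases $m=-2$ and $m$ near $-1$ also need separate attention. I would first try to produce a single normal ruling pattern on the outer part that combines with the inner block ruling for every $m$, with switches on the inner crossings only when they are $S$/$Z$ blocks. Failing that, I would fall back on the explicit $\tb$ computation for each parity class.
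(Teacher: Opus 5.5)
Your combinatorial core is the paper's. The paper observes that each front of Figure \ref{fig:max_tbk_ruling} already meets $\Bout$ in the tangle of Figure \ref{fig:one_ruling}, and that the marked normal rulings give maximal $\tb$ by Proposition \ref{prop:ruling_maxtb}. So the first option in your third step is exactly its argument. Your second step, assembling a closed front by filling $\Bin$ with a block word from Proposition \ref{prop:2-braid_XSZ}, is not needed, and neither is the fallback $\wr-\frac12 c$ computation.

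What you are missing is the step that gives ``the intersection with $\Bout$'' the meaning Section \ref{subsec:outside} needs. You only isotope $S$ smoothly in the front projection. Here, however, $\Bout$ is the ball bounded by the convex sphere whose dividing set on $P$ is normalized as in Figure \ref{fig:dividings_on_sphere}. That normalization determines three things: the boundary germs, the identification of $\Bin$ with the standard $D\times I$ behind Proposition \ref{prop:2-braid_XSZ}, and the choice of the compressing disks $D_i$ relative to the dividing set on $\partial\Bout$. The last of these yields the value $\bar{x}_1=\bar{x}_2=5$ that Lemma \ref{lem:szarakon_osszeg_allando} reads off from this very lemma.

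A sphere drawn around the box in the front is not known to be convex with this dividing set. Normalizing it afterwards is an isotopy through bypasses that changes the slope of $\Gamma_A$. Such slope changes amount to Dehn twists of $S$ relative to the knot, so they can a priori change how the new ball is identified with the model. Without further control you cannot conclude that the outer piece is still the tangle of Figure \ref{fig:one_ruling}.

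The paper closes this gap in four steps. It places an explicit Legendrian unknot $\U$ linked with the front, as in Figure \ref{fig:unknot_henger}. It takes the sphere that $\U$ determines, as in the proof of Theorem \ref{thm:normalizingGamma}. It invokes that theorem to put the dividing set on $P$ in normalized position. Only then does it identify the two parts of the knot with the inner and outer $2$-braids of the standard model. Add this step, or some other construction of a convex sphere in normalized position directly in the front. With it, your observation about the fronts of Figure \ref{fig:max_tbk_ruling} and their normal rulings finishes the proof.
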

	\begin{proof}
			The proof is an application of Legendrian rulings. First, consider the Legendrian unknot $\U$ linked with the realization of $K(4,m)$ as in Figure \ref{fig:unknot_henger}. By Theorem \ref{thm:normalizingGamma}, we can isotope the corresponding sphere and a standard neighborhood of the Legendrian knot so that the dividing set on the four-punctured sphere is in the normalized position of	Figure \ref{fig:dividings_on_sphere}. We use this normalized position
			to identify the two parts of the knot with the inner and outer 2-braids shown in our standard model.
			
			In Figure \ref{fig:max_tbk_ruling}, we gave front diagrams of
			Legendrian knots realizing $K(4,m)$ for every allowed $m$. 
			In each of the cases considered there, the intersection
			of the given front with $\Bout$ is the Legendrian realization
			shown in Figure \ref{fig:one_ruling}.
			The normal rulings indicated in these diagrams show,
			by Proposition \ref{prop:ruling_maxtb}, that the corresponding
			Legendrian realizations have maximal $\tb$.
						
			Therefore, the Legendrian realization in Figure
			\ref{fig:one_ruling} can be obtained as the intersection of $\Bout$
			with a maximal $\tb$ Legendrian realization of $K(4,m)$.
	\end{proof}

\begin{figure}[h!]
	\centering
	\includegraphics[width=0.45\textwidth]{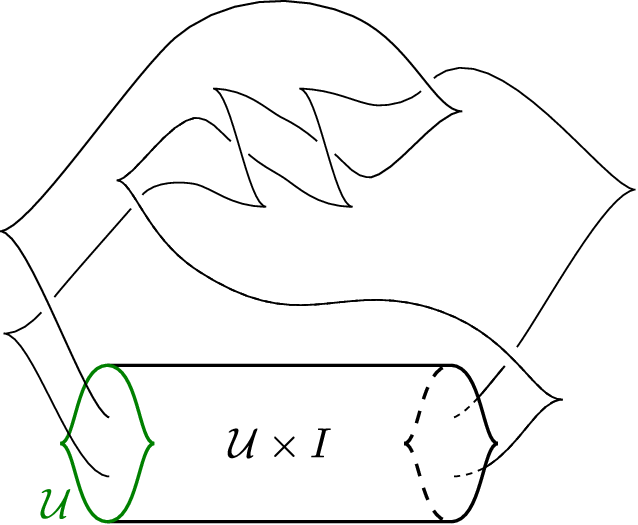}
	\caption{The cylinder $\U\times I$ which we obtain from a Legendrian unknot $\U$ linked with the Legendrian realization of $K(4,m)$.}
	\label{fig:unknot_henger}
\end{figure}

	Now we state our upper bound for the possible tight contact
	structures on the outer complement for $k=4$.

\begin{theorem}\label{thm:max10_kivul}
		Fix $k=4$ and assume that either $|m|>2$ or $m=-2$.	
		Then maximal $\tb$ Legendrian realizations $\L$ of $K(4,m)$
		determine at most $10$ tight contact structures on
		$\Bout\setminus N(\L\cap\Bout)$, up to isotopy fixed near
		the boundary.
		
\end{theorem}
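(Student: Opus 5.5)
We plan to bound the tight contact structures on $M=\Bout\setminus N(\L\cap\Bout)$ by cutting $M$ into balls along convex disks and counting the possible dividing sets on those disks. First we fix the boundary data. By Theorem \ref{thm:normalizingGamma}, the dividing set on the four-punctured sphere $P$ is the normalized one of Figure \ref{fig:dividings_on_sphere}. On the two tube pieces $\partial N\cap\Bout$ (annuli around the two arcs of the 2-braid with $4$ half-twists), the dividing curves are arcs whose total twisting is fixed by $\tb(\L)=\tb_{\max}$, by Proposition \ref{thm:max_tb}. Rounding edges (Theorem \ref{thm:edge_rounding}) turns $\partial M$ into a genus-$2$ surface (a handlebody boundary, since $M$ is the complement of two unknotted arcs in a ball) with an explicit dividing set. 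The one remaining unknown is how the twisting along $\partial N$ is distributed between the two arcs, equivalently where the cusps sit on the outer braid. Lemma \ref{lem:outside_max_tb} shows that at least one distribution is realized.

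Next we choose two compressing disks $E_1,E_2$ that cut $M$ into a $3$-ball: for example, meridian disks between consecutive twist regions of the vertical braid, or the disk $\Dout$ bounded by $\gamma$ together with a second disk separating the strands. We Legendrian realize their boundaries via Theorem \ref{thm:LegendrianRealizationPrinciple}, arranged to meet the dividing set minimally, and make the disks convex. The count $|\partial E_i\cap\Gamma|=2t_i$ is computed from the normalized $\Gamma$. A disk with $2t$ boundary intersections carries one of the Catalan-number many dividing configurations without closed curves. We then discard configurations in two ways:
\begin{enumerate}
\item those that give a disallowed bypass, i.e.\ a disconnected dividing curve on the resulting sphere (Theorem \ref{thm:GirouxCriterion});
\item those whose boundary-parallel arc yields a bypass along a ruling of $\partial N$ with $1/r>\tb+1$, which by Lemma \ref{lem:twist_number} would destabilize $\L$ and contradict maximality of $\tb$.
\end{enumerate}
Bypass rotation (Proposition \ref{prop:bypass_rotation}) will be used to move a boundary-parallel arc on one disk to a more convenient position, showing that several configurations induce the same structure or force a destabilization.

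After cutting along $E_1\cup E_2$, the remaining ball has a connected dividing set, so by Theorem \ref{thm:B3unique} each surviving pair of dividing sets on $(E_1,E_2)$, together with the twisting distribution on $\partial N$, determines at most one tight structure up to isotopy rel boundary. The bound $10$ should then come from summing over the admissible twisting distributions of the number of surviving disk configurations, for instance a few distributions times a small number of configurations, with coincidences removed by bypass rotation. The main obstacle is this case analysis: showing that every configuration not on the list either produces a destabilizing bypass (through the twist number lemma with the correct slope inequality) or is overtwisted. I would first try taking $E_1,E_2$ as the two disks splitting the $4$ half-twists into $2+2$, so that the symmetry of $k=4$ reduces the cases, and then carry out the enumeration.
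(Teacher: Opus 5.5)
Your framework matches the paper's: cut $M$ into a ball along two convex compressing disks, discard dividing sets that are overtwisted or give a destabilizing bypass, and apply Theorem \ref{thm:B3unique}. But the proposal stops where the content of the theorem lies. Nothing in it produces the number $10$, and the enumeration is left undone.

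The paper takes $D_i$ to be the disk exhibiting the arc $l_i$ as boundary parallel, so $\partial D_i$ runs longitudinally along $\partial N_i$. The normalized $\Gamma_P$ determines only the $5$ intersection points of $\partial D_i$ with the dividing set on $\partial\Bout$. The number $\bar x_i$ of intersections on $\partial N_i$ is the unknown, and parity makes it odd and at least $3$. No boundary-parallel arc may cut off only middle points $s_1,\dots,s_{2j-1}$, since that bypass would destabilize $\L$. So the $x_i+1$ consecutive middle endpoints must be joined to the six remaining ones, forcing $x_i=\bar x_i-2\le 5$. An edge-rounding check shows that a boundary-parallel arc at position $2$, $4$ or $6$ creates a homotopically trivial dividing curve. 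This leaves six configurations per disk: one with $x_i=5$, two with $x_i=3$, three with $x_i=1$.

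Your candidate disks would not serve. Level disks ``between twist regions'' or ``splitting the $4$ half-twists into $2+2$'' meet both strands. The disk $\Dout$ separates the two arcs, so $\Dout$ plus one further disk does not cut $M$ into a ball.

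Even with six configurations per disk you only get $36$ pairs. The decisive step is the coupling $\bar x_1+\bar x_2=10$ (Lemma \ref{lem:szarakon_osszeg_allando}), which you only gesture at as ``admissible twisting distributions.'' Your assertion that the outer twisting is fixed by $\tb(\L)=\tb_{\max}$ needs justification. The invariant $\tb$ fixes the twisting along all of $\L$. To see that the inner relative twisting is constant, one must use Proposition \ref{prop:2-braid_XSZ}: a non-destabilizable inner piece consists only of $X$ blocks, or only of $S$ and $Z$ blocks, which have equal writhe and cusp count.

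Moreover, a constant signed sum $\rtw_1+\rtw_2$ does not by itself fix $\bar x_1+\bar x_2=2(|\rtw_1|+|\rtw_2|)$. The paper evaluates $|\rtw_1+\rtw_2|=5$ on the realization of Lemma \ref{lem:outside_max_tb}, and excludes opposite signs using $\bar x_i\le 7$. Only then do the pairs $(x_1,x_2)\in\{(1,5),(3,3),(5,1)\}$ give $3\cdot 1+2\cdot 2+1\cdot 3=10$.

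Bypass rotation is not needed here, and no coincidences are removed at this stage. The symmetry reduction to $6$ happens only in the proof of Theorem \ref{thm:main}.
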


\begin{proof}
Consider the standard model of $K(4, m)$, the normalized sphere $S$ and the neighborhood $\nu(K(4,m))$ as in Theorem \ref{thm:normalizingGamma} and in Figure \ref{fig:K(k,m)_and_sphere}. Let $\L$ be a maximal $\tb$ Legendrian realization of $K(4,m)$ and $N$ a standard neighborhood \mbox{of $\L$}. Define $l_1$ and $l_2$ as the two Legendrian arcs of $\L \cap \Bout$ and their neighborhoods $N_1$, $N_2$ as the components of the neighborhood $N\cap \Bout$. 
Observe that $\Bout \setminus (N_1 \cup N_2)$ is a handlebody of genus two.
Choose a complete system of two disjoint compressing disks $D_1$ and $D_2$
which cuts $\Bout \setminus (N_1 \cup N_2)$ into a 3-ball.
We choose $D_1$ as in Figure \ref{fig:diskhatar}.
The disk $D_2$ is obtained by carrying out the analogous construction
simultaneously along the second strand $l_2$.

\begin{figure}[h!]
	\centering
	\includegraphics[width=0.9\textwidth]{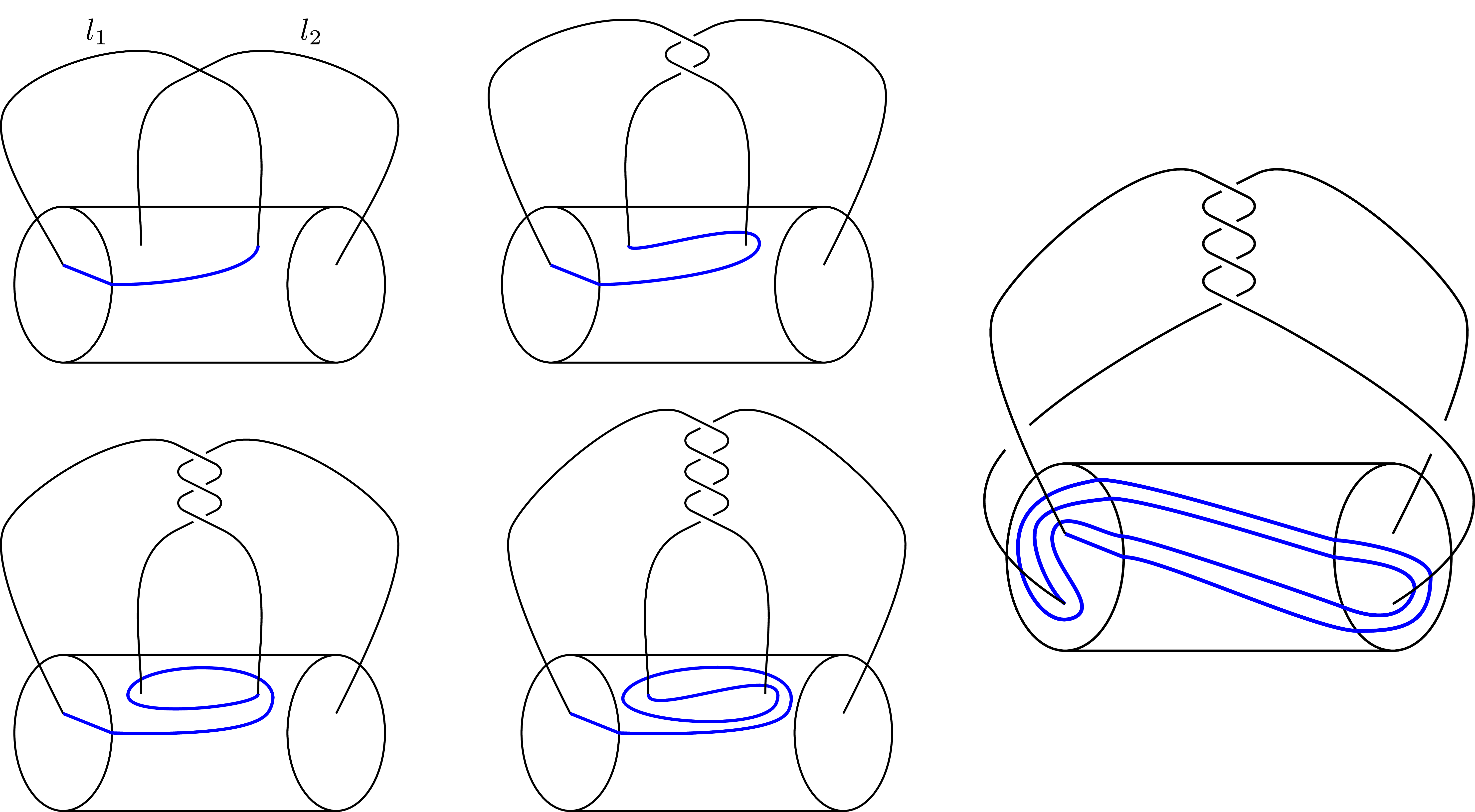}
	\caption{On the top left, the blue arc and the left black arc $l_1$
		together bound a disk. The other figures show how isotoping these arcs in $\Bout$ gives the boundary of the compressing disk $D_1$.
		The compressing disk $D_2$ is obtained by the analogous construction along the second strand $l_2$.}
	\label{fig:diskhatar}
	
\end{figure}

Choose the longitudinal parts $\partial D_i\cap\partial N_i$
to intersect the dividing set efficiently, that is, with the minimal
geometric intersection number in their relative isotopy classes with
fixed endpoints. Use the Legendrian Realization Principle,
Theorem \ref{thm:LegendrianRealizationPrinciple}, to Legendrian realize
$\partial D_i$ by isotoping $S$ (we keep the same notations).
Then perturb $D_i$, keeping its boundary fixed, to make it convex. In Figure \ref{fig:hengermetszespontok4}, we can see that $\partial D_1$ intersects the dividing curve on $\partial\Bout$ in 5 points. The analogous construction for $D_2$ gives the same number of intersection points. Thus the number of intersection points of $\partial D_i$ and the dividing curve on $\partial N_i$ also has to be odd. Let $\bar{x}_i$ denote this number. By the smoothing of the dividing set near $\partial \Bout \cap \partial N_i$ there are 2 such intersection points on $\partial N_i$ near $\partial\Bout$. As a conclusion, we get that $\partial D_i$ intersects the dividing curve on $\partial N_i$ in at least 3 points.

\begin{figure}[h!]
	\centering
	\includegraphics[width=0.9\textwidth]{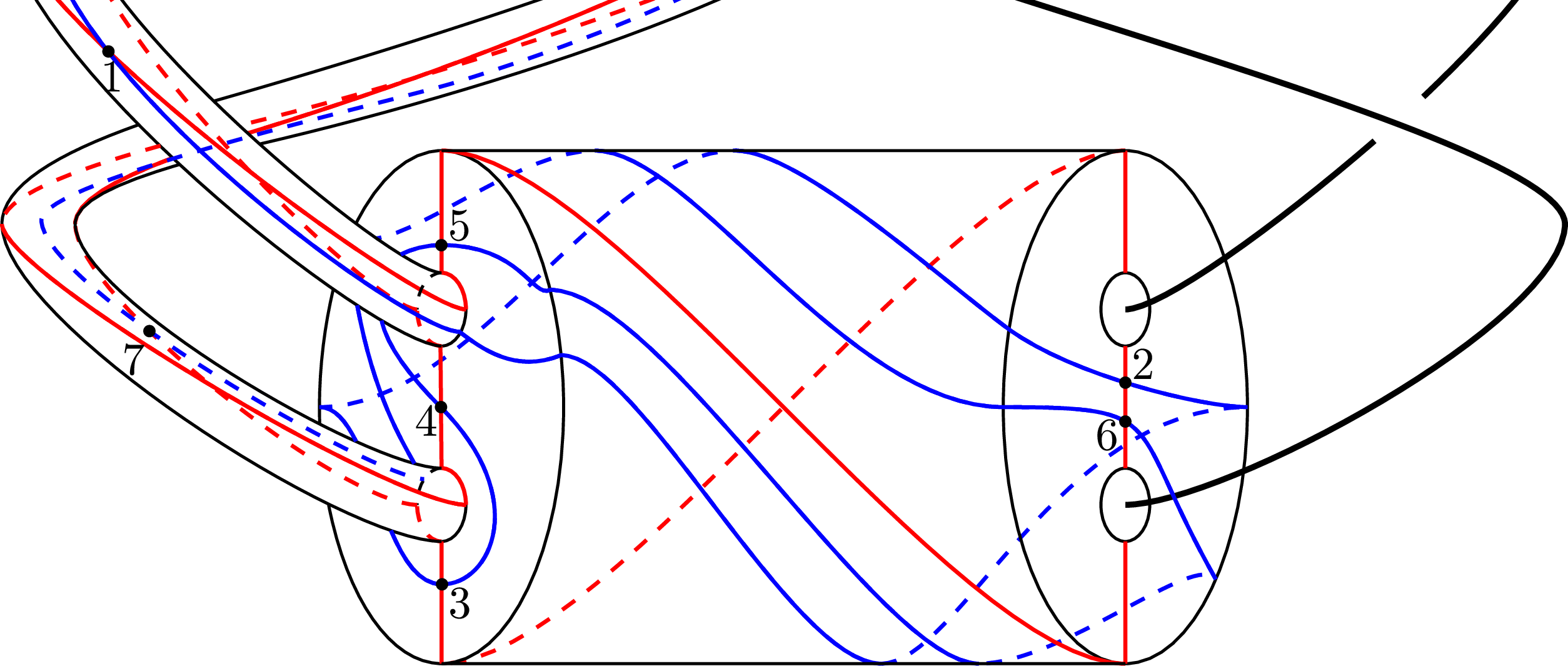}
	\caption{The boundary of the compressing disk $D_1$ and the dividing curve on $\partial \Bout$ intersect in 5 points. The analogous construction for $D_2$ gives the same number of intersection points.}
	\label{fig:hengermetszespontok4}
\end{figure}

 We now examine the tight contact structure on
 $\Bout\setminus(N_1\cup N_2)$.
 Since $D_1$ and $D_2$ cut this handlebody into a $3$-ball
 and the dividing set on $\partial\Bout$ is fixed, it remains
 to study the possible dividing sets on $D_1$ and $D_2$.
 By Theorem \ref{thm:B3unique}, a prescribed connected dividing
 set on the boundary of the cut-open ball, with the contact
 germ fixed as above, determines a unique tight contact
 structure up to isotopy fixed near the boundary.
 Gluing back along the fixed disks therefore gives at most
 one outer contact structure for each pair of dividing set
 configurations.

 We have already observed that $\bar{x}_i$ ($i=1,2$) is at least 3
 and odd. Let us denote these intersection points of $\partial D_i$
 and the dividing curve on $\partial N_i$ by $s_0,s_1,...,s_{2j}$.
 There is no bypass between the middle intersection points
 $s_1,...,s_{2j-1}$ because it would be a bypass along $\partial N_i$
 giving a destabilization of $\L$. For the number of these middle
 intersection points, we use the notation
 $x_i=\bar{x}_i-2=2j-1$.
 Since $\partial D_i$ intersects the dividing curve on
 $\partial\Bout$ in 5 points, the total number of intersection
 points of $\partial D_i$ and the dividing set is $x_i+7=2j+6$.
 After rounding the edges, the absence of such a bypass forces
 the $x_i+1$ consecutive endpoints of the dividing set on $D_i$
 along the middle part to be joined to the remaining six endpoints.
 Thus $x_i\leq5$, and hence $8\leq2j+6\leq12$.

 \begin{figure}[h]
 	\centering
 	\includegraphics[width=1\textwidth]{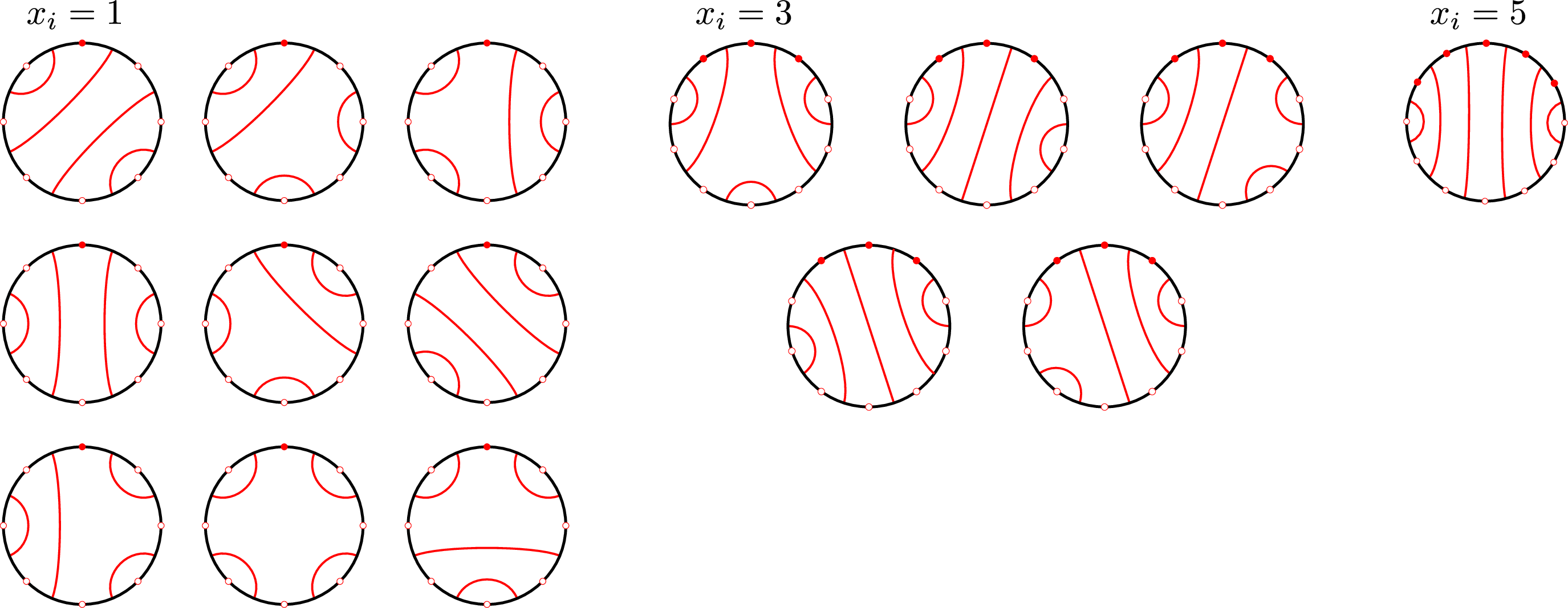}
 	\caption{Dividing set configurations on $D_i$. $x_i=2j-1$ denotes the number of middle intersection points on $\partial N_i$.}
 	\label{fig:osszes_lehetseges_osszekotes}
 \end{figure}
 
 Figure \ref{fig:osszes_lehetseges_osszekotes} shows fifteen structures of the dividing set on $D_i$ which we need to examine. However, after rounding the edges between $\partial N_i$ and $\partial \Bout$ we can show that there are nine of these configurations which give overtwisted contact structures on the 3-ball $\Bout \setminus \bigcup\limits_{i=1}^2 (N_i \cup D_i)$, thus can be excluded:
 
 \begin{lemma}
 	If the dividing set on $D_1$ or $D_2$ looks like one of the configurations of Figure \ref{fig:rossz_osszekotesek}, then the contact structure on $B^3$ is overtwisted.
 \end{lemma}
 \begin{figure}[h!]
	\centering
	\includegraphics[width=0.44\textwidth]{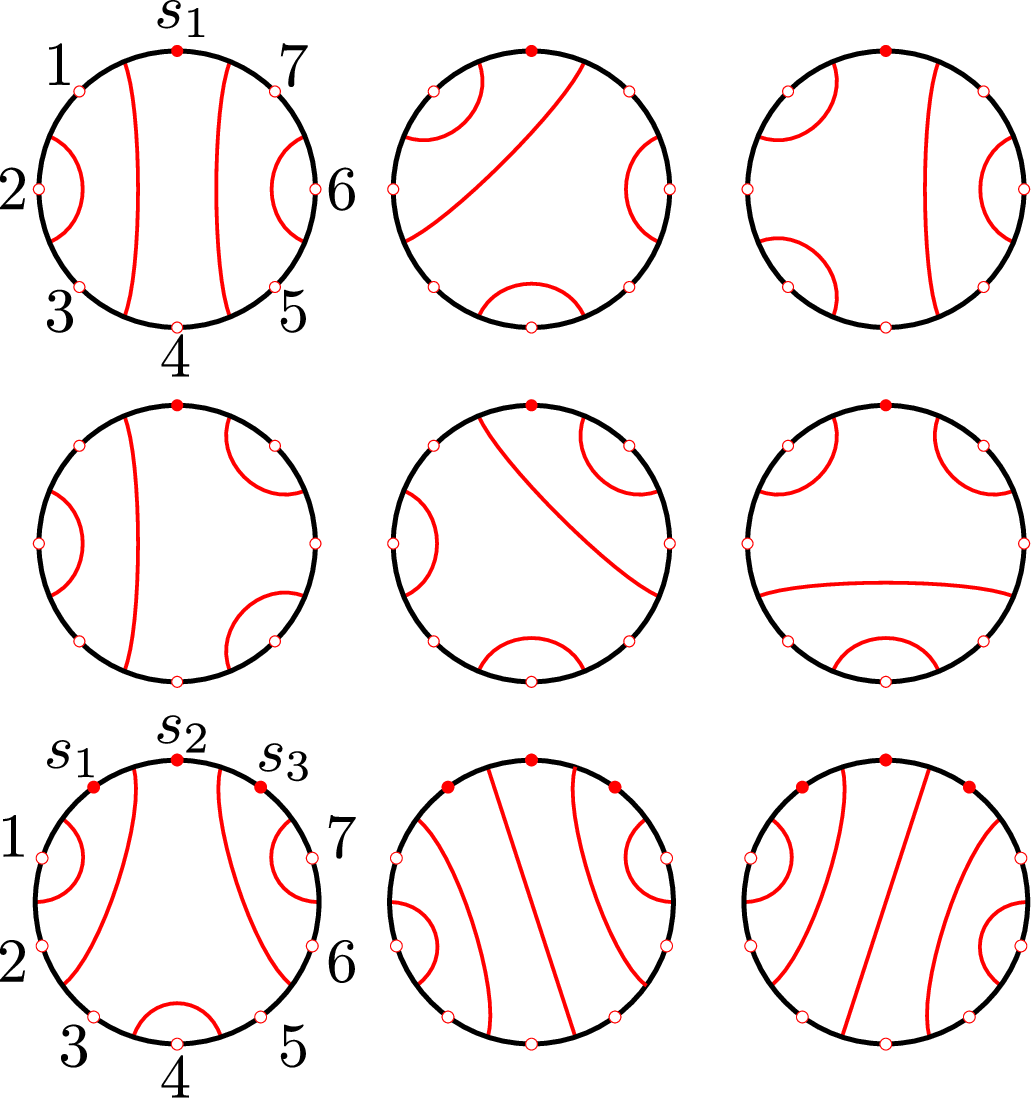}
	\caption{The dividing set configurations on $D_i$ which we excluded because they gave overtwisted contact structures. We numbered the intersection points of $\partial D_i$ and the dividing set. For example, in case $x_i=1$, the second diagram in the first row has three boundary parallel arcs bypassing intersection points 1, 4 and 6.}
	\label{fig:rossz_osszekotesek}
\end{figure}
\begin{proof}
		Number the intersection points of $\partial D_i$ and the dividing set as in \mbox{Figure \ref{fig:rossz_osszekotesek}}. To check that a construction gives overtwisted contact structure, we study the boundary parallel dividing arcs of it. 
		
		In Figure \ref{fig:Bp246_OT}, we examine the bypass of intersection point 2, 4 and 6, respectively. We will show that these are disallowed, because any of these three bypasses result in an overtwisted contact structure.
		
		Indeed, in Figure \ref{fig:Bp246_OT} we can see that after attaching the bypass along the blue arc and rounding the edges between $\partial N_i$ and $\partial \Bout$, the dividing set has a homotopically trivial component. Thus the contact structure near the surface is overtwisted.
		
		Now observe that each configuration of Figure \ref{fig:rossz_osszekotesek} has a boundary parallel arc bypassing intersection point number 2, 4 or 6. This means that they are all disallowed.
	\end{proof}
		
	\begin{figure}[!h]
		\centering
		\includegraphics[width=0.9\textwidth]{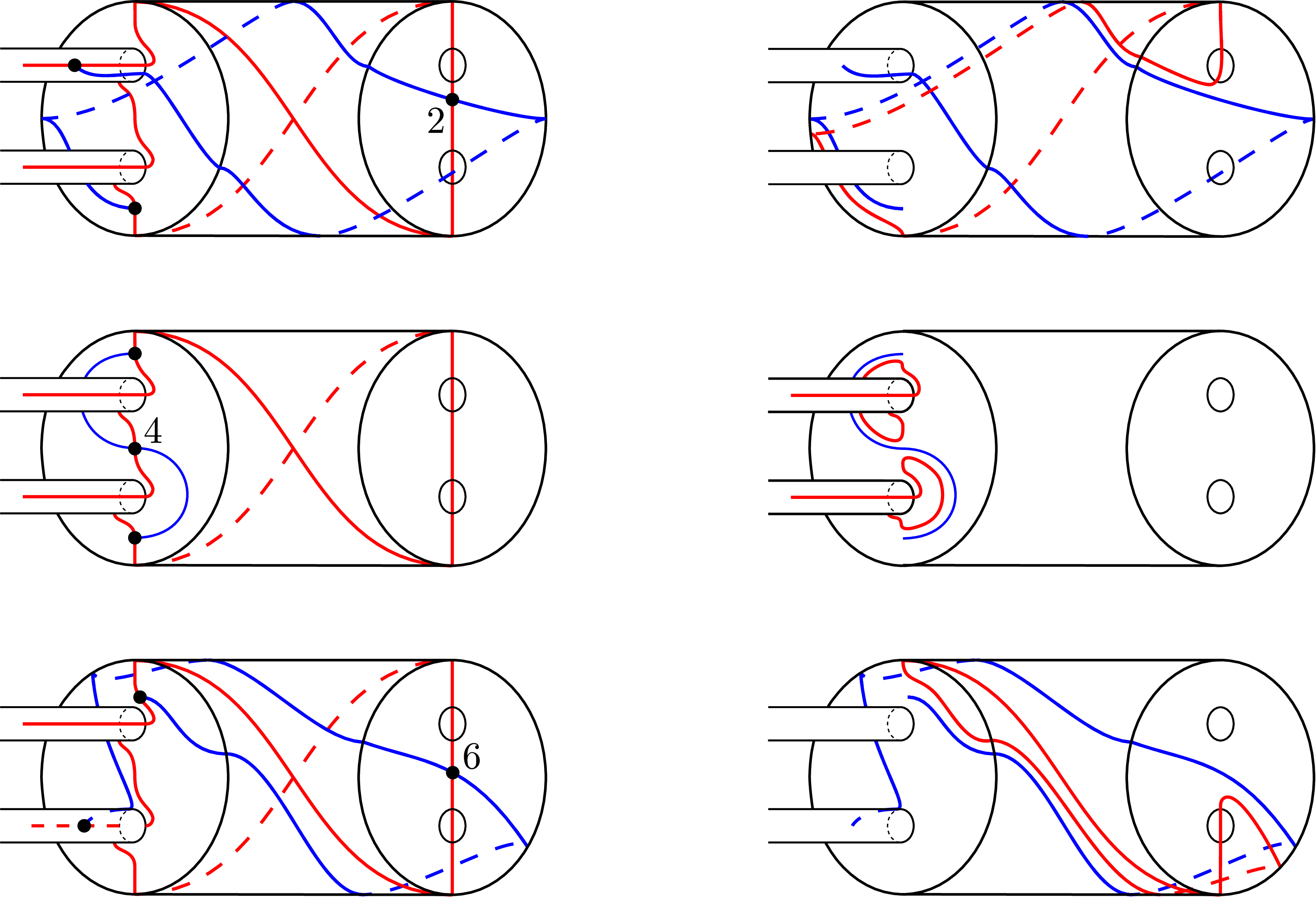}
		\caption{Attaching a bypass whose middle intersection point is 2, 4 or 6 gives an overtwisted contact structure. On the left we can see the original dividing curve and the attaching arc. On the right the homotopically trivial component of the obtained dividing set is shown. }
		\label{fig:Bp246_OT}
	\end{figure}

  The six remaining arrangements are the ones in Figure \ref{fig:osszekotesek}.

 \begin{figure}[h!]
 	\centering
 	\includegraphics[width=0.44\textwidth]{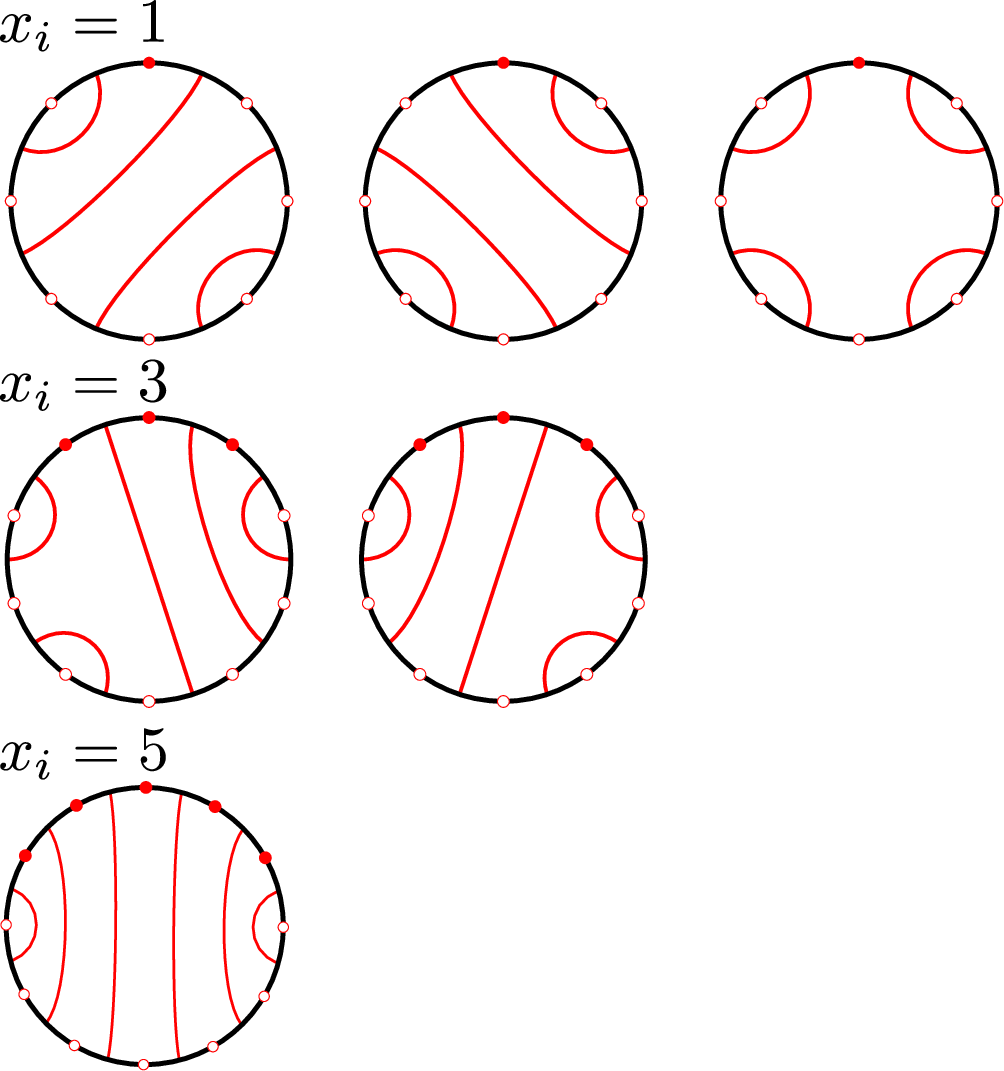}
 	\caption{Possible dividing set configurations on the disk $D_i$.}
 	\label{fig:osszekotesek}
 \end{figure}

In what follows, we show that we cannot choose every possible pair of arrangements in Figure \ref{fig:osszekotesek} to be the dividing set of $D_1$ and $D_2$. We have the following requirement.

\begin{lemma}\label{lem:szarakon_osszeg_allando}
	For $i=1,2$, let $\bar{x}_i=2j_i+1$ be the number of intersection
	points of $\partial D_i$ and the dividing curve on $\partial N_i$.
	Then $\bar{x}_1+\bar{x}_2=10$.
\end{lemma}
\begin{proof}
	Let $f_S$ denote the Seifert framing and $f_{TB}$ the
	Thurston--Bennequin framing of $\L$. We think of framings as
	longitudes on $\partial N$, oriented along $\L$, and write
	$\tw(f_1,f_2)$ for their signed intersection number, with the
	usual framing sign convention.
	
	Choose a smooth framing $f$ in the standard model so that
	$\restr{f}{\Bout}$ agrees with the framing induced by
	$\partial D_i$ along $l_i$, and extend it over $\Bin$.
	Transport this framing and the chosen disks to every realization,
	keeping the identifications at the four endpoints, the boundary
	contact germs and the rounding of the corners fixed. Then
	$\tw(f,f_S)$ is independent of the Legendrian realization, and
	$$\tb(\L)=\tw(f_{TB},f)+\tw(f,f_S).	$$
	
	Orient both dividing curves on $\partial N$ along $\L$.
	For a subarc $\L'\subset\L$, we define the \emph{relative twisting}
	$\rtw(\restr{f_{TB}}{\L'},\restr{f}{\L'})$ as one half of the
	signed intersection number of the two dividing arcs over $\L'$
	with the comparison longitude $\restr{f}{\L'}$, using the fixed
	endpoint identifications. For a union of subarcs we take the sum.
	Both dividing curves represent the contact framing, so
	$$	\tw(f_{TB},f)
	=\rtw(\restr{f_{TB}}{\Bout},\restr{f}{\Bout})
	+\rtw(\restr{f_{TB}}{\Bin},\restr{f}{\Bin}).$$
	
	Since $\L$ has maximal $\tb$, its inner part cannot destabilize.
	By Proposition \ref{prop:2-braid_XSZ}, it consists either of a
	fixed number of $X$ blocks or of a fixed number of $S$ and $Z$
	blocks. With the orientation inherited from $\L$, the $S$ and $Z$
	blocks have the same writhe and number of cusps. Hence the inner
	relative twisting is constant. Since $\tb(\L)$ and
	$\tw(f,f_S)$ are also fixed, the outer relative twisting is constant.
	
	Let $\rtw_i$ denote the outer relative twisting along $l_i$.
	By the efficient choice of the longitudinal parts and our relative
	convention,
	$$
	2|\rtw_i|=\bar{x}_i,\qquad
	\rtw(\restr{f_{TB}}{\Bout},\restr{f}{\Bout})
	=\rtw_1+\rtw_2.
	$$
	To determine the constant, use the maximal $\tb$ realization from
	Lemma \ref{lem:outside_max_tb}, whose outer part is shown in
	Figure \ref{fig:one_ruling}. Figure
	\ref{fig:szarakon_osszeg_allando} gives
	$\bar{x}_1=\bar{x}_2=5$. In the standard tube coordinates, the
	comparison longitudes cross the dividing arcs in the same winding
	direction on the two analogous local configurations. Hence, for this particular realization, the outer contributions
	have the same sign, so $\rtw_1=\rtw_2=\pm5/2$, and 
	$$	|\rtw_1+\rtw_2|=5	$$
	for every maximal $\tb$ realization.
	
	\begin{figure}[h!]
		\centering
		\includegraphics[width=0.85\textwidth]{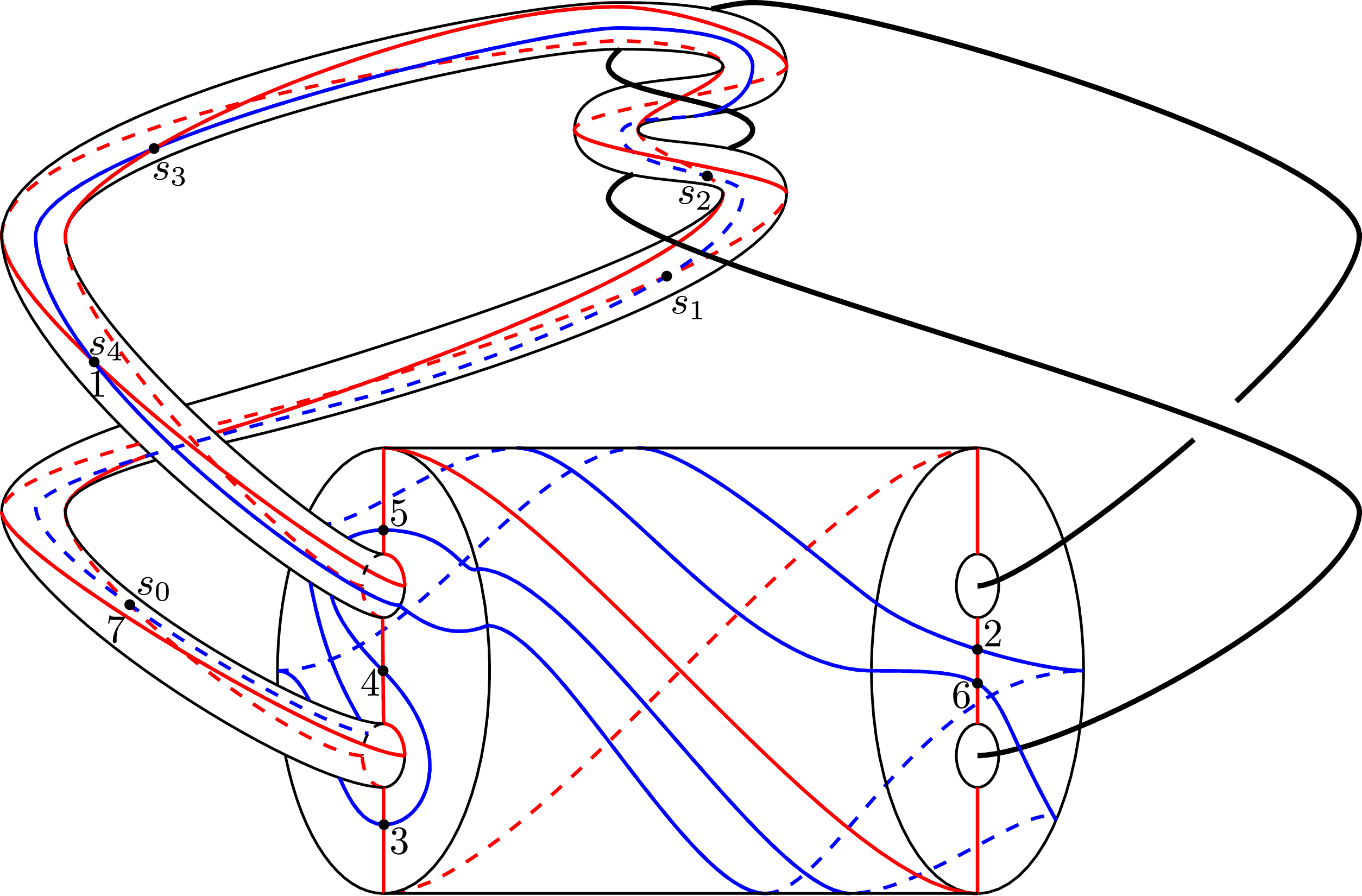}
		\caption{An illustration of the case $x_1=x_2=3$.
			The blue curve is $\partial D_1$. The second compressing disk
			$D_2$ is chosen by the analogous construction along $l_2$.}
		\label{fig:szarakon_osszeg_allando}
	\end{figure}
	
	It remains to show that the two outer contributions have the
	same sign for every maximal $\tb$ realization. Recall that $\bar{x}_i\leq7$, so $|\rtw_i|\leq7/2$.
	If the two relative twistings had opposite signs, then
	$|\rtw_1+\rtw_2|\leq7/2$, a contradiction. Therefore,
	$$
	\bar{x}_1+\bar{x}_2
	=2\bigl(|\rtw_1|+|\rtw_2|\bigr)
	=2|\rtw_1+\rtw_2|=10,
	$$
	which finishes the proof.
\end{proof}

We had six possible dividing set configurations for $D_i$: one with $x_i=5$, two with $x_i=3$ and three with $x_i=1$ (see Figure \ref{fig:osszekotesek}). According to Lemma \ref{lem:szarakon_osszeg_allando}, we have ten possible ways to pair these arrangements up to get dividing sets for $D_1$ and $D_2$, which finishes the proof of Theorem \ref{thm:max10_kivul}.
\end{proof}

We are now ready to prove Theorem \ref{thm:main}.

\begin{proof}[Proof of Theorem \ref{thm:main}]
	For $m=2$, the knot $K(4,2)$ is the twist knot $K_{-5}$ in the
	notation of Etnyre, Ng and V\'ertesi. By \cite{ENV}, it has exactly two non-destabilizable Legendrian representatives, so the claimed	upper bound holds. Thus, we may assume $m>2$ or $m\leq -2$.
	
	By the proof of Theorem \ref{thm:max10_kivul}, there are at most $10$ possible ordered pairs of dividing set configurations for $D_1$ and $D_2$.
	However, when counting closed Legendrian realizations, some of these cases can be identified using the symmetry of the standard model.
	
	Consider the rotation
	$\varrho(x,y,z)=(-x,-y,z)$
	by $180^\circ$ about the $z$ axis. Since $$	\varrho^*(dz-y\,dx)=dz-y\,dx,
	$$
	$\varrho$ is a contactomorphism of $(S^3,\xi_{std})$. The rotation
	interchanges the two outer strands and hence the roles of $D_1$ and
	$D_2$.
	
	The three pairs with $(x_1,x_2)=(1,5)$ are therefore identified with
	the three pairs with $(x_1,x_2)=(5,1)$, giving three cases up to this
	symmetry. If $(x_1,x_2)=(3,3)$, there are two possible dividing set
	configurations on each disk. The rotation induces an involution on
	this two-element set, and in either case the four ordered pairs form
	three orbits. Thus the ten possible ordered pairs form at most six
	orbits under $\varrho$.
	
	For a closed Legendrian realization $\L$, the knots $\L$ and
	$\varrho(\L)$ are Legendrian isotopic. Indeed, $\varrho$ is a contactomorphism, and Proposition \ref{prop:elegctctotszamolni} implies that their contactomorphic complements determine the same Legendrian isotopy class. Hence, it is enough to consider one representative from each of the six orbits above.

	For each of these outer contact structures, the number of
	possible inner Legendrian pieces is bounded by
	Theorem \ref{thm:belul}.
	The Legendrian isotopies used there are supported away
	from $S$, so they extend to contact isotopies fixed near $P$.
	If two maximal $\tb$ realizations determine the same outer
	contact structure and the same inner Legendrian isotopy
	class, the corresponding contactomorphisms agree near $P$
	and therefore glue to a contactomorphism of the complements
	of the resulting closed Legendrian knots.
	By Proposition \ref{prop:elegctctotszamolni}, the two knots
	are Legendrian isotopic.
	
	 Therefore, the number
	of maximal $\tb$ Legendrian realizations is at most $6$ if $m>2$,
	and at most
	$$6\cdot\left\lfloor\frac{(|m|+2)^2}{4}\right\rfloor \qquad \textrm{if } m\leq-2.$$
\end{proof}

\begin{proof}[Proof of Corollary \ref{cor:oriented}]
	For $m=2$, the knot $K(4,2)$ is $K_{-5}$ in the notation
	of \cite{ENV}, which has two oriented Legendrian isotopy
	classes of maximal $\tb$ representatives. Thus the bound
	holds, and we may assume $m>2$ or $m\leq-2$.
	
	Write
	$$
	N_m=
	\begin{cases}
		1, & \textrm{if } m>2,\\[1mm]
		\left\lfloor\dfrac{(|m|+2)^2}{4}\right\rfloor,
		& \textrm{if } m\leq-2.
	\end{cases}
	$$
	By Theorem \ref{thm:belul}, there are at most $N_m$
	possible inner pieces. The isotopies used in its proof
	are fixed near $S$, so this bound also applies when
	the orientations at the endpoints are fixed.
	
	Keep the four endpoints marked as in the standard smooth
	model. Record $\varepsilon=+1$ if the oriented knot enters
	$\Bin$ at the upper right endpoint, and $\varepsilon=-1$
	if it leaves there. Since the closed knot has one component,
	this determines the orientations at all four endpoints.
	Together with the 10 ordered outer configurations counted
	in the proof of Theorem \ref{thm:max10_kivul}, we therefore
	have at most $20N_m$ complete choices.
	
	Each complete choice determines at most one oriented
	Legendrian isotopy class. Indeed, the gluing argument in
	the proof of Theorem \ref{thm:main} gives a contactomorphism
	of the complements when the outer configuration and the
	inner piece agree. The comparison maps respect the fixed
	boundary markings, and agreement of $\varepsilon$ ensures
	that the oriented Seifert longitude is preserved.
	The contactomorphism therefore extends over the standard
	neighborhoods preserving the orientations of their cores.
	As in the proof of Proposition \ref{prop:elegctctotszamolni},
	this gives an oriented Legendrian isotopy.
	
	Consider the rotation
	$\varrho(x,y,z)=(-x,-y,z)$ used in the proof of
	Theorem \ref{thm:main}. It is contact isotopic to the
	identity in the standard $\mathbb{R}^3$ chart. Indeed,
	putting $u=z-xy/2$, the contact form becomes
	$$	dz-y\,dx=du+\frac12(x\,dy-y\,dx),$$
	and rotations of the $(x,y)$-plane with $u$ fixed join
	the identity to $\varrho$. Thus $\L$ and $\varrho(\L)$
	are oriented Legendrian isotopic when the orientation
	is transported by $\varrho$.
	
	The rotation interchanges the two outer strands.
	To determine its effect on $\varepsilon$, take
	$\varepsilon=+1$. At the right endpoints the knot enters
	$\Bin$ above and leaves below. At the left endpoints,
	it leaves above and enters below when $m$ is even,
	whereas it enters above and leaves below when $m$ is odd.
	Hence $\varrho$ changes $\varepsilon$ to $-\varepsilon$
	in the even case and preserves $\varepsilon$ in the
	odd case.
	
	For negative $m$, read the inner word from left to right.
	The rotation reverses its order and interchanges $S$
	and $Z$. Applying $\varrho$ to the local isotopy $SSZ\simeq ZSS$
	gives the local isotopy $SZZ\simeq ZZS$. Hence the rotation
	induces an involution on the inner parameters used in the
	proof of Theorem \ref{thm:belul}.
	
	If $m$ is even, the involution on the complete choices
	has no fixed points, since it changes $\varepsilon$.
	Thus the at most $20N_m$ choices form at most $10N_m$
	orbits, giving the claimed bounds for even $m$.
	
	If $m>2$ is odd, then $N_m=1$ and the rotation preserves
	$\varepsilon$. For each of its two values, the 10 outer
	configurations form at most 6 orbits, as in the proof
	of Theorem \ref{thm:main}. Thus there are at most 12
	oriented Legendrian isotopy classes.
	
	Finally, suppose that $m\leq-3$ is odd, and write
	$|m|=2q+1$. Let $a$ and $b$ be the numbers of $S$ blocks
	at odd and even indices, respectively, as in the proof
	of Theorem \ref{thm:belul}. Thus
	$0\leq a\leq q+1$ and $0\leq b\leq q$.
	Reversal preserves the parity of the indices in a word
	of odd length, so the rotation acts by
	$$	(a,b)\longmapsto(q+1-a,q-b).$$
	A fixed pair would satisfy $2(a+b)=2q+1$, which is
	impossible. Hence the involution on the complete choices
	again has no fixed points. Their number of orbits is
	therefore at most $10N_m$, proving the remaining case.
\end{proof}

\section*{Acknowledgement}
The first author was supported by the EKÖP-26 University Research Scholarship Program of the Ministry of Science and Technology from the source of the National Research, Development and Innovation Fund (grant no. EKÖP-26-4-II-ELTE-20260370). All mathematical results, calculations, figures and examples were developed by the authors. OpenAI's ChatGPT was used during manuscript revision for language editing and typo detection.

\bibliographystyle{plain}
\bibliography{biblio}
\end{document}